\newcommand{\N}{\mathbb{N}}
\newcommand{\R}{{\mathbb{R}}}
\newcommand{\C}{{\mathbb{C}}}
\newcommand{\dd}{{{\rm d}}}
\newcommand{\ii}{{\rm i}}
\newcommand{\e}{{\rm e}}
\newcommand{\cf}{\emph{cf.}}
\newcommand{\ie}{{\emph{i.e.}}}
\newcommand{\eg}{{\emph{e.g.}}}
\newcommand{\ov}{\overline}
\newcommand{\la}{\lambda}
\newcommand{\eps}{\varepsilon}
\newcommand{\spd}{\sigma_{\rm disc}}
\newcommand{\spp}{\sigma_{\rm p}}
\newcommand{\essinf}{\operatorname*{ess \,inf}}
\newcommand{\Dom}{{\operatorname{Dom}}}
\newcommand{\Ran}{{\operatorname{Ran}}}
\renewcommand{\Re}{\operatorname{Re}}
\renewcommand{\Im}{\operatorname{Im}}
\newcommand{\sgn}{\operatorname{sgn}}
\newcommand{\supp}{\operatorname{supp}}
\newcommand{\Tr}{\operatorname{Tr}}
\newcommand{\BigO}{\mathcal{O}}
\newcommand{\Rd}{\mathbb{R}^d}
\newcommand{\Dti}{-\partial_x^2}
\newcommand{\ls}{\lesssim}
\newcommand{\gs}{\gtrsim}
\theoremstyle{plain}
\newtheorem{theorem}{Theorem}[section]
\newtheorem{lemma}[theorem]{Lemma}
\newtheorem{proposition}[theorem]{Proposition}
\newtheorem{corollary}[theorem]{Corollary}
\theoremstyle{definition}
\newtheorem{example}[theorem]{Example}
\newtheorem{remark}[theorem]{Remark}
\newtheorem{asm-sec}[theorem]{Assumption}
\newcommand\cB{\mathcal B}
\newcommand\cC{\mathcal C}
\newcommand\cP{\mathcal P}
\newcommand\cS{\mathcal S}
\newcommand\cT{\mathcal T}
\newcommand{\Ai}{\operatorname{Ai}}
\newcommand\Sa{S_{\rm A}}
\newcommand\ec{\eps_{\rm crit}}
\begin{document}

\numberwithin{equation}{section}

\graphicspath{{Figures/}}
	
	\title[Diverging eigenvalues in domain truncations]{Diverging eigenvalues in domain truncations of Schr\"odinger operators with complex potentials}
	
	\author{Iveta Semor\'adov\'a}
	\address[Iveta Semor\'adov\'a]{
		Faculty of Nuclear Science and Physical Engineering, Czech Technical University in Prague,
		Czech Republic \& Nuclear Physics Institute, The Czech Academy of Science, Hlavn\'i 130, 25068 \v Re\v z, Czech Republic }
	\email{Iveta.Semoradova@fjfi.cvut.cz}
	
\author{Petr Siegl}
\address[Petr Siegl]{School of Mathematics and Physics, Queen's University Belfast, University Road, BT7 1NN Belfast, UK}

\email{p.siegl@qub.ac.uk}

\thanks{The work of I.S.~was supported by the Grant Agency of the Czech Technical University in Prague, grant No.~SGS19/183/OHK4/3T/14. It was initiated at the International Centre for Theoretical Sciences (ICTS), Bengaluru, India, during a visit of authors in the program - Non-Hermitian Physics - PHHQP XVIII in 2018 (Code: ICTS/nhp2018/06). Further, I.S.~would like to thank Queen's University Belfast for the hospitality during her Erasmus+ stay there is September 2019 -- July 2020.}

\subjclass[2010]{35J10, 47A10, 47A58}

\keywords{Schr\"odinger operators, complex potential, domain truncation, spectral exactness, diverging eigenvalues}
	
\date{July 20, 2021}
	
\begin{abstract}
Diverging eigenvalues in domain truncations of Schr\"odinger operators with complex potentials are analyzed and their asymptotic formulas are obtained. Our approach also yields asymptotic formulas for diverging eigenvalues in the strong coupling regime for the imaginary part of the potential.
\end{abstract}

\maketitle

\section{Introduction}

Approximating spectra of non-self-adjoint partial differential operators is a major challenge in spectral analysis even in the case of purely discrete spectra. In this paper, we focus on the spectral convergence of domain truncations for multidimensional Schr\"odinger operators $- \Delta + Q$ in $L^2(\Omega)$ with $\Omega \subset \Rd$ and a \emph{complex} potential $Q :\Omega \to \C$, the study of which was initiated in \cite{Brown-2004-24}. As an example, which we use here to indicated the questions addressed in this paper and our new results, consider the imaginary oscillator 
\begin{equation}\label{T.im.intro}
T_\infty = -\Delta_{\rm D} + \ii |x|^2	
\end{equation}
in $L^2(\Omega)$ with $\Omega = \R^d \setminus \ov{B_1(0)}$ and the Dirichlet boundary condition imposed at~$\partial \Omega$. A possible sequence of truncations are  $T_n = - \Delta_{\rm D} + Q$ in $L^2(\Omega_n)$ with $\Omega_n:=B_{s_n}(0) \cap \Omega$, $s_n \nearrow + \infty$, and Dirichlet boundary conditions at $\partial \Omega_n$,~$n \in \N$. 
The general goal is to determine the relation of spectra of $\{T_n\}$ and $T_\infty$.

It was established in \cite{Boegli-2017-42} that for potentials $Q$ with $\Re Q \geq 0$, $|Q(x)| \to + \infty$ as $|x| \to \infty$ and satisfying suitable regularity conditions (hence in particular for \eqref{T.im.intro}), the truncations $T_n$ converge to $T_\infty$ as $n \to \infty$ in the norm resolvent sense. Consequently the approximation is \emph{spectrally exact}, \ie~all eigenvalues of $T_\infty$ are approximated by eigenvalues of $T_n$ and there is no pollution (there are no finite accumulation points of eigenvalues of $\{T_n\}$ which are not eigenvalues of $T_\infty$), see~\eg~\cite{Brown-2004-24}. The results in \cite{Boegli-2017-42} include more general cases with different boundary conditions and with potentials having negative real part controlled by $\Im Q$, moreover, the convergence rates of eigenvalues were related to the decay of eigenfunctions of $T_\infty$. (For  further works on spectral approximations using limiting essential spectra and essential numerical ranges see \cite{Boegli-2018-8,Boegli-2019-40,Boegli-2020-279}.)

It is however crucial to notice that the spectral exactness does not exclude eigenvalues of the truncations  escaping to infinity as $n \to \infty$, which is in our example~\eqref{T.im.intro} illustrated in Figure~\ref{fig:ix2.intro}. In fact, many other examples, in particular the one-dimensional imaginary cubic oscillator ($Q(x) = \ii x^3$) examined in \cite{Boegli-2017-42,Guenther-2019-arxiv}, suggest that diverging eigenvalues are rather typical and exhibit quite regular patterns.
\begin{figure}
	\includegraphics[width=0.6 \textwidth]{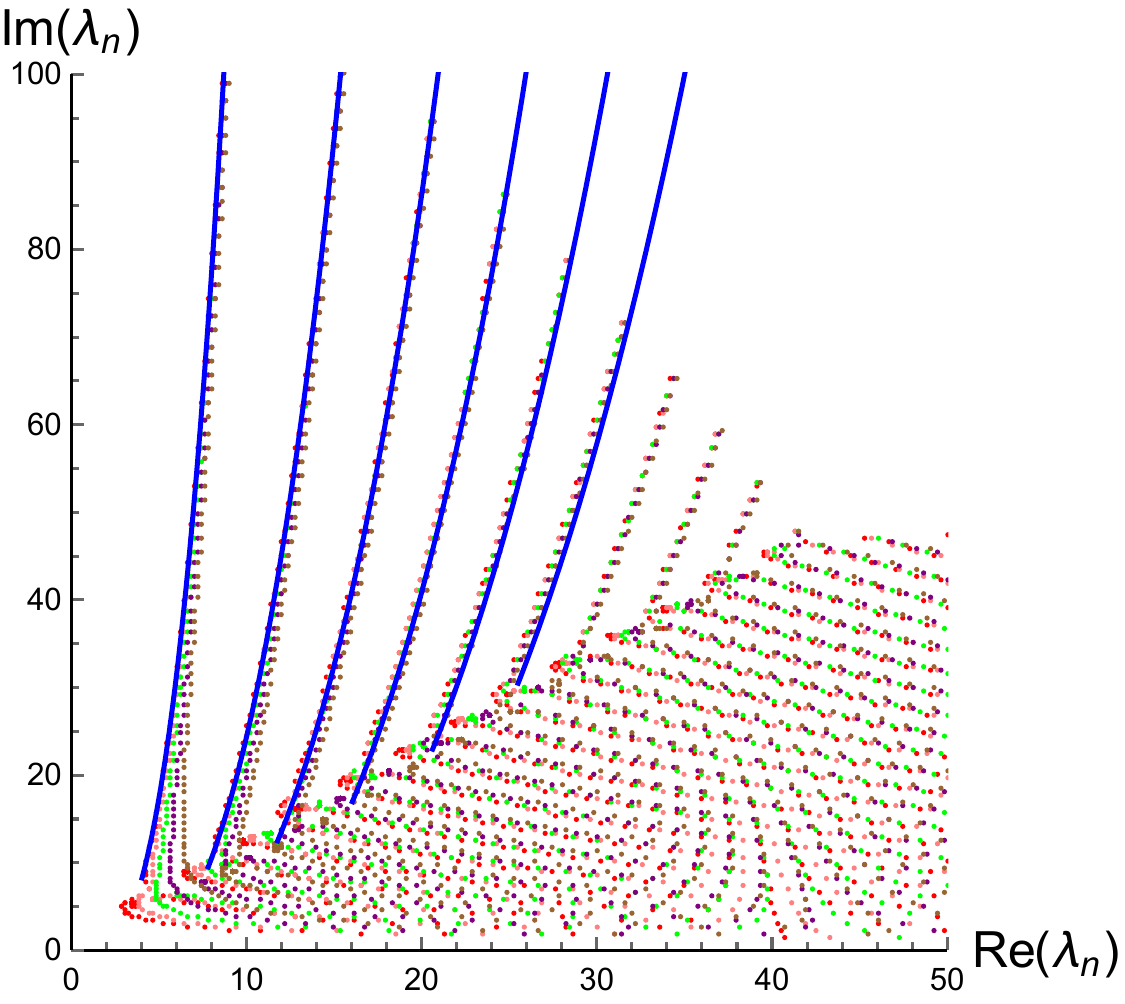}
	\caption{Trajectory in $\C$ of eigenvalues of truncations $T_n$ of $T_\infty$ from \eqref{T.im.intro} for $d=3$, $s_n = 0.1n$, $n=15,16,\dots,115$; eigenvalues for $l=1$ (red), $l=2$ (pink), $l=3$ (green), $l=4$ (purple), $l=5$ (brown) only are plotted; see \eqref{la.knl.rad.intro} and Section~\ref{ssec:Q.radial} for details. The numerics illustrates the spectral exactness (the clusters of eigenvalues at the ray $\e^{\ii \pi/4} \R_+$) as well as the eigenvalues escaping to infinity along the blue curves, see~\eqref{la.knl.rad.intro} for asymptotic formulas.}
	\label{fig:ix2.intro}
\end{figure}
The extreme case are the truncations of the imaginary Airy operator $T_\infty = \Dti + \ii x$ in $L^2(\R)$ to $T_n$ in $L^2((-s_n,s_n))$ where, due to the established spectral exactness and the fact the spectrum of $T_\infty$ is empty, \emph{all} eigenvalues of $T_n$ escape to infinity (see Example~\ref{ex:Airy} for details, more results can be found in \cite[Thm.~3.1]{Beauchard-2015-21}). 

The main goal of this paper is to analyze the diverging eigenvalues employing an operator convergence and a localization strategy inspired by \cite[Thm.~3.1]{Beauchard-2015-21}. 
This approach yields also improvements of the spectral exactness results in \cite{Boegli-2017-42},
moreover, it is applicable in some problems with strongly coupled $\Im Q$ 
\begin{equation}\label{Tg.intro}
	T_g = -\Delta + \Re Q + \ii g \Im Q, \qquad g \to + \infty,
\end{equation}
arising in various contexts like enhanced dissipation, \cf~\cite{Gallagher-2009-2009,Schenker-2011-18}, or $\cP\cT$-symmetric phase transitions, \cf~\cite{Caliceti-2014-19,Baker-2020-61} (see Section~\ref{sec:strong.c} for details). 

In particular in example~\eqref{T.im.intro}, our results show that the truncations $T_n$ contain asymptotically the diverging eigenvalues
\begin{equation}\label{la.knl.rad.intro}
	\la_{k,n,l}=(2s_n)^\frac23 \left(\overline{\nu_k}+ \BigO_{k,l}\left(s_n^{-\frac{4}{3}}\right) \right)+\ii s_n^2,\quad n\to\infty,
\end{equation}
where $\{\nu_k\}$ are eigenvalues of the imaginary Airy operator $\Dti + \ii x$ in $L^2(\R_+)$ with Dirichlet boundary condition at $0$ (see Section~\ref{ssec:Q.radial}, Example~\ref{ex:Airy.cone}, Figures~\ref{fig:ix2.intro} and \ref{fig:ix2.rad}). To be more precise, by writing that spectra of operators $\{A_n\}$ contain asymptotically the eigenvalues $\{\la_{k,n}\}_k$ we mean that 
\begin{equation}
	\forall k\in\N, \quad \exists n_k \in \N,  \quad \forall n>n_k ,\quad \lambda_{k,n}\in \sigma (A_n).  
\end{equation}
Figures~\ref{fig:ix2.intro} and \ref{fig:ix2.rad}, as well as other examples in Section~\ref{sec:ex.div}, exhibit a good correspondence of numerics and obtained asymptotics. Moreover, they suggest that all diverging eigenvalues are described (in these examples); however, this remains open. 

The improvements in the spectral exactness lie in finding the convergence rate for the resolvent norm in terms of the decay of $|Q|^{-1}$, establishing convergence in Schatten norms and estimating the constants in the convergence rates. Moreover, we can also treat truncations of operators with non-empty essential spectrum like $\Dti + x \e^x$ in $L^2(\R)$ where one truncates the part of the domain where the potential is unbounded, \eg~to $(-\infty, s_n)$ with $s_n \nearrow +\infty$, see Example~\ref{ex:tr.ex}.

An example of our results for the operators \eqref{Tg.intro} with a strongly coupled $\Im Q$ are the eigenvalues of operators $T_{g}$ in $L^2(\R)$ 
\begin{equation}\label{Tg.Schenker.intro}
	T_g=\Dti+x^2+\ii g (1+|x|^\kappa)^{-1},
\end{equation}
with $\kappa,g >0$, which are known to satisfy $\Re \sigma (T_g) \geq C_\kappa \, g^{\frac{2}{\kappa+2}}$ for $g >0$, see \cite{Schenker-2011-18}. Our results show that this bound is exhausted as $g \to + \infty$ since the spectra of $T_g$ contain asymptotically the eigenvalues
\begin{equation}
\lambda_{k,g}=g^{\frac{2}{\kappa+2}}(\overline \nu_{k,\kappa}+o_k(1))+\ii g, \quad g \to + \infty,
\end{equation}
where $\{\nu_{k,\kappa}\}$ are eigenvalues of $ \Dti+\ii|x|^\kappa$ in $L^2(\R)$ (see  Example~\ref{ex:enh.dis} for more details and remainder estimates).   

On a more technical side, in this paper we focus on accretive case ($\Re Q \geq 0$) and Dirichlet boundary conditions, nonetheless, several extensions are possible and straightforward, see remarks after Assumption~\ref{asm:Q}, Theorem~\ref{thm:T.basic} and Remark~\ref{rem:sect}.  In Section~\ref{sec:prelim} we collect relevant known facts about Schr\"odinger operators with complex potentials, in particular, the results on the domains, graph norm, compactness of resolvent and eigenfunctions decay; justifications for some slight extensions are given in Appendix. We also include several examples used throughout the paper.

In Section~\ref{sec:pert}, we estimate of the resolvent difference for Schr\"odinger operators with perturbed potential as well as underlying domain, see Theorem~\ref{thm:res}. The latter can be seen as a generalization of the estimates in~\cite{Davies-1983-9,Davies-1984-12} to an accretive case with a variable underlying domain and represents the key technical step in our analysis. The essential ingredient is the graph norm separation
\begin{equation}
	\|(-\Delta +Q)f\| + \|f\| \gs \|\Delta f\| + \|Qf\| + \|f\|,
\end{equation}
see Theorem~\ref{thm:T.basic}, which is known to be valid (for $Q$ unbounded at infinity) if
\begin{equation}\label{Q.nab.as}
|\nabla Q(x)| = o(|Q(x)|^\frac 32), \quad |x| \to \infty,
\end{equation}
see \cite{Krejcirik-2017-221}, Assumption~\ref{asm:Q} and remarks below for more details and Remark~\ref{rem:sect} for possible extensions in less regular cases. Theorem~\ref{thm:res} as well as the related estimates on the eigenvalues and eigenfunctions in Theorem~\ref{thm:rates} are reformulated for a sequence of operators in Corollary~\ref{cor:n} which constitutes our main technical tool. 

In Section \ref{sec:dom.tr} we revisit domain truncations in Theorem~\ref{thm:dom.tr} and improve the previous results in \cite{Boegli-2017-42} which are based on collective compactness (and a slightly stronger assumption than \eqref{Q.nab.as}). In Section~\ref{sec:div.ev} we implement the localization strategy of \cite{Beauchard-2015-21} and reformulate conditions of Corollary~\ref{cor:n}, see Theorem~\ref{thm:div}. The somewhat implicit conditions on the potential $Q$ can be further substantially simplified in one dimensional case with purely imaginary potentials, see Theorem~\ref{thm:1D.im}; interestingly the main condition in Theorem~\ref{thm:1D.im} is Assumption~\ref{asm:U} \ref{asm:U*} is closely related to \eqref{Q.nab.as}. 
A range of examples is covered in Section~\ref{sec:ex.div}, including a multidimensional one in Example~\ref{eq:Q.2d} for which the assumptions of Theorem~\ref{thm:div} are verified directly. 
Finally, in Theorem~\ref{thm:g} we employ Corollary~\ref{cor:n} to analyze operators \eqref{Tg.intro}. %

All plots are produced using build in commands in Mathematica, namely NDEigenvalues using FiniteElement PDEDiscretization method and improving its precision by refinement of the mesh with setting MaxCellMeasure to 0.01.

\subsection{Notation}
We use conventions $\N = \{1,2,\dots\}$, $\N_0=\N \cup \{0\}$, $\N^* = \N \cup \{\infty\}$, a subscript in the notation $\BigO_a$ indicates that the constant depends on the parameter $a$, and $\langle x \rangle := (1+|x|^2)^\frac 12$. We write $a \lesssim b$ to denote that, given $a,b \ge 0$, there exists a constant $C>0$, independent of any relevant variable or parameter, such that $a \le Cb$; $a \gtrsim b$ is analogous and $a \approx b$ means that $a \lesssim b$ and $a \gtrsim b$. 

For an open $\Omega \subset \R^d$ and a measurable function $m : \Omega \to \C$, we define the corresponding multiplication operator in $L^2(\Omega)$ on the maximal domain 
\begin{equation}
	\Dom(m):=\{ f \in L^2(\Omega) \, : \, m f \in L^2(\Omega)\}.
\end{equation}
The Dirichlet Laplacian $-\Delta_{\rm D}$ is defined via its quadratic form, \ie~
\begin{equation}
	\Dom(\Delta_{\rm D}):= \{ f \in W^{1,2}_0(\Omega) \, : \, \Delta f \in L^2(\Omega)\}.
\end{equation}
The characteristic function of a set $\Sigma$ is denoted by $\chi_\Sigma$ and 
$\widetilde \chi_\Sigma := 1 - \chi_\Sigma.$

\section{Preliminaries}
\label{sec:prelim}

We collect several known results on Schr\"odinger operators with complex potentials, mostly following~\cite{Almog-2015-40,Krejcirik-2017-221,Boegli-2017-42}; precise references are given at individual claims. We also work out several examples that are used later to illustrate the results.
\subsection{Schr\"odinger operators with complex potentials}
\label{ssec:Schr}
The main basic assumption on the potential $Q$ reads as follows.

\begin{asm-sec}\label{asm:Q}
	Let $\emptyset \neq \Omega \subset \R^d$ be open and let $Q \in W^{1,\infty}_{\rm loc}(\ov \Omega; \C)$ with $\Re Q \geq 0$ satisfy
	\begin{equation}\label{Q.asm.sep}
     \exists\eps_\nabla \in [0,\ec), \quad  \exists M_{\nabla} \geq 0, \quad  
			|\nabla Q| \leq \eps_\nabla |Q|^\frac 32 + M_{\nabla} \quad \text{a.e.~in~} \Omega;
	\end{equation}
	here $\ec = 2-\sqrt2$. 
	\hfill $\blacksquare$
\end{asm-sec}

The value of $\ec$ in Assumption \ref{asm:Q} is obtained by simple estimates in Lemma~\ref{lem:gn.est.app} in Appendix which imply the graph norm estimate \eqref{T.norm.est}. The optimal value of $\ec$ for the latter in the self-adjoint case is $\ec=2$, see \cite{Evans-1978-80,Everitt-1978-79}. 
In examples of $Q$, \eqref{Q.asm.sep} holds usually with an arbitrarily small $\eps_\nabla>0$, see \eg~Example~\ref{ex:Schr.basic}, and if $Q$ is unbounded it suffices to check~\eqref{Q.nab.as}.

We omit explicit claims, nonetheless, as a consequence \eqref{T.norm.est} or more precisely \eqref{T.norm.est.lem}, the results summarized in this section generalize straightforwardly when a relatively bounded perturbation with a sufficiently small bound is added.

The Dirichlet realization $T$ of $-\Delta +Q$ in $L^2(\Omega)$ can be obtained via the form
\begin{equation}\label{t.def}
	\begin{aligned}
		t[f]:=\|\nabla f\|^2 + \int_\Omega Q(x) |f(x)|^2 \, \dd x,
		\quad
		\Dom(t):= W_0^{1,2}(\Omega) \cap \Dom(|Q|^\frac12)
	\end{aligned}
\end{equation}
invoking the generalization of Lax-Migram theorem due to Almog and Helffer \cite{Almog-2015-40}.
The associated operator is defined in the usual way
\begin{equation}\label{T.def.form}
	\begin{aligned}
		\Dom(T)&:=\{ f \in \Dom(t) \, : \, \exists \eta \in L^2(\Omega), \ \forall g \in \Dom(t), \ t(f,g)=\langle \eta, g \rangle\},
		\\
		Tf&:=\eta = -\Delta f + Qf.
	\end{aligned}
\end{equation}

Under Assumption~\ref{asm:Q} the form $t$ is bounded with respect to a natural norm $\|\cdot\|_{t}^2 := \|\cdot \|_{W^{1,2}}^2 + \| |Q|^\frac12 \cdot \|^2$, %
but not coercive in general. Nevertheless, following \cite{Almog-2015-40}, the form $t$ exhibits a generalized coercivity 
\begin{equation*}
|t_\alpha(f,f)| + |t_\alpha(\Phi f,f)| \gs \|f\|_{t}^2,
\quad 
|t_\alpha(f,f)| + |t_\alpha(f, \Phi f)| \gs\|f\|_{t}^2, \quad f \in \Dom(t),
\end{equation*}
where $t_\alpha:=t+\alpha$ with some $\alpha \geq 0$ and 
\begin{equation}\label{Phi.def}
	\Phi:= \frac{\Im Q}{\sqrt{1+|Q|^2}}.
\end{equation}

Alternatively, one can introduce $T$ by Kato's theorem~\cite[Thm.~VII.2.5]{EE}, see~\cite{Boegli-2017-42}.

The following theorem summarizes known properties of $T$.
\begin{theorem}\label{thm:T.basic}
	Let $Q$ satisfy Assumption~\ref{asm:Q} and let the operator $T$ be defined as in \eqref{T.def.form}. Then $T$ is $m$-accretive, moreover,  
	\begin{enumerate}[\upshape i), wide]
		\item \label{thm:T.basic.gn}
		the graph norm of $T$ separates, \ie~there is a constant $a_{\nabla}>0$, depending only on $\eps_\nabla$, $M_\nabla$, such that for all $f \in \Dom(T)$
		\begin{equation}\label{T.norm.est}
			\|T f\|^2 + \|f\|^2 \geq a_\nabla (\|\Delta f\|^2 + \|Q f\|^2+\|f\|^2),
		\end{equation}
		hence the domain of $T$ separates, \ie~$\Dom(T) = \Dom(-\Delta_{\rm D}) \cap \Dom(Q)$;
		\item $T$ is $\cC$-self-adjoint, \ie~$T^*=\cC T \cC$, where $\cC$ is the complex conjugation operator $\cC f=\overline{f}$, $f \in L^2(\Omega)$, thus
		\begin{equation}\label{T.adjoint}
			\begin{aligned}
				T^* = -\Delta + \ov{Q}, 
				\quad 
				\Dom(T^*) = \Dom(T);
			\end{aligned}
		\end{equation}
		\item if $\Omega$ is bounded or if $\Omega$ in unbounded and 
		\begin{equation}\label{Q.unbd}
			\lim_{R \to \infty} \essinf_{|x|>R, x \in \Omega}|Q(x)| = +\infty, 
		\end{equation}
		then $T$ has compact resolvent, thus the spectrum of $T$ is discrete (consists of isolated eigenvalues of finite algebraic multiplicity);
		\item denote by $S$ the self-adjoint Dirichlet realization of $-\Delta + |Q|$ in $L^2(\Omega)$, \ie~
		$S := -\Delta + |Q|$, $\Dom(S):= \Dom(T)$, then (with $p>0$)
		\begin{equation}\label{Sp.equiv.1}
			(T+1)^{-1} \in \cS_p(L^2(\Omega)) \quad \Longleftrightarrow \quad (S+1)^{-1} \in \cS_p(L^2(\Omega))
		\end{equation}
		and with $k_1, k_2 >0$ depending only on $\eps_\nabla, M_\nabla$, 
		\begin{equation}\label{Sp.equiv.2}
			k_1 \|(S+1)^{-1}\|_{\cS_p} \leq		\|(T+1)^{-1}\|_{\cS_p} \leq k_2		\|(S+1)^{-1}\|_{\cS_p}.
		\end{equation}
		Moreover, let $\partial \Omega \in C^{2,\alpha}$ for some $\alpha>0$. If for $p>0$
		\begin{equation}\label{Sp.crit}
			\int_{\Omega \times \R^d} (|\xi|^2+|Q(x)|+1)^{-p} \, \dd x \dd \xi < \infty,
		\end{equation}
	then $(S+1)^{-1} \in \cS_p(L^2(\Omega))$.
	\end{enumerate}
\end{theorem}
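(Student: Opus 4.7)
The plan is to establish (i) first---the graph norm separation---and then deduce (ii)--(iv) from it combined with standard functional-analytic tools. For (i), on a form core I would expand
\begin{equation*}
\|Tf\|^2 = \|\Delta f\|^2+\|Qf\|^2 -2\Re\langle\Delta f,Qf\rangle
\end{equation*}
and integrate by parts to rewrite the cross term as
\begin{equation*}
-2\Re\langle \Delta f,Qf\rangle=2\!\int\Re(Q)|\nabla f|^2+2\,\Re\!\int(\nabla f\cdot\ov{\nabla Q})\,\ov f.
\end{equation*}
The first summand is nonnegative by $\Re Q\ge 0$ and can be discarded; the second is bounded in modulus by $2\eps_\nabla\int|Q|^{3/2}|f||\nabla f|+2M_\nabla\int|f||\nabla f|$ via \eqref{Q.asm.sep}. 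A Cauchy-Schwarz applied to the pair $(|Q|^{1/2}|\nabla f|,|Q||f|)$ reduces the essential task to controlling $\int|Q||\nabla f|^2$, which I would estimate by a further integration by parts against $\Delta f$, producing $\|Qf\|\|\Delta f\|$ plus a lower-order term reintroducing $\int|\nabla Q||f||\nabla f|$. These inequalities close into a quadratic whose discriminant is nonnegative exactly when $\eps_\nabla<\ec=2-\sqrt2$, yielding \eqref{T.norm.est} with $a_\nabla$ depending only on $\eps_\nabla, M_\nabla$. This is precisely the content of Lemma~\ref{lem:gn.est.app} in the appendix, which I would quote; density of the form core extends the estimate to $\Dom(T)$, and $\Dom(T)=\Dom(-\DD)\cap\Dom(Q)$ follows at once.

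For (ii), the form $t$ from \eqref{t.def} satisfies $\ov{t[f]}=\|\nabla f\|^2+\int\ov Q\,|f|^2$, which is precisely the form associated to $-\Delta+\ov Q$; since Assumption~\ref{asm:Q} is invariant under $Q\mapsto\ov Q$ (the weight $\Phi$ from \eqref{Phi.def} merely changes sign), the Almog--Helffer construction applied to $\ov Q$ yields the $m$-accretive operator $-\Delta+\ov Q$ on the same domain, which by the standard form-adjoint identification equals $T^*$; the equality $T^*=\cC T\cC$ is then verified on a core where both sides act as the differential expression $-\Delta+\ov Q$.

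For (iii), (i) implies that any bounded set in $\Dom(T)$ is bounded in $\Dom(-\DD)\cap\Dom(Q)$. If $\Omega$ is bounded, Rellich--Kondrachov gives compactness of $\Dom(-\DD)\hookrightarrow L^2(\Omega)$. If $\Omega$ is unbounded and \eqref{Q.unbd} holds, split $f=\chi_{B_R}f+\wt\chi_{B_R}f$; the tail obeys $\|\wt\chi_{B_R}f\|\le(\essinf_{|x|>R}|Q|)^{-1}\|Qf\|\to 0$ uniformly on any graph-norm-bounded set as $R\to\infty$, while the head lies in $W^{2,2}(\Omega\cap B_R)$ and is relatively compact by the bounded-domain case.

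For (iv), by (i) the operator $(S+1)(T+1)^{-1}$ is bounded on $L^2(\Omega)$ with norm depending only on $\eps_\nabla, M_\nabla$ (since $S$ is self-adjoint on $\Dom(T)$ with equivalent graph norm), and similarly with $S$ and $T$ exchanged. Writing $(T+1)^{-1}=(S+1)^{-1}\,(S+1)(T+1)^{-1}$ and invoking the two-sided ideal property of $\cS_p$ gives \eqref{Sp.equiv.1} and \eqref{Sp.equiv.2}. The phase-space criterion \eqref{Sp.crit} is the standard Birman--Solomyak/Cwikel-type bound for nonnegative self-adjoint Schr\"odinger operators, provable under $C^{2,\alpha}$ boundary regularity (needed for up-to-the-boundary $W^{2,2}$ elliptic estimates); I would quote the standard reference. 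The main obstacle throughout is (i), specifically the sharp absorption producing the critical value $\ec=2-\sqrt2$; once (i) is in hand, the remaining items follow by standard arguments.
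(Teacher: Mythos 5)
Your proposal is correct and matches the paper's route: the paper likewise reduces everything to the graph-norm separation of Lemma~\ref{lem:gn.est.app} (whose integration-by-parts and absorption scheme, with the critical value $\ec=2-\sqrt2$, you reproduce) and otherwise defers to \cite{Almog-2015-40,Krejcirik-2017-221} together with the standard form-adjoint, Rellich-plus-tail, and Schatten-ideal arguments you sketch for (ii)--(iv). The only slip is cosmetic: in (iii) the truncated piece need only be bounded in $W^{1,2}$ (via $\|\nabla f\|^2\le\|\Delta f\|\,\|f\|$ and extension by zero from $W^{1,2}_0(\Omega)$), not in $W^{2,2}(\Omega\cap B_R)$, which would require boundary regularity not assumed here.
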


The proofs can be found in \cite{Almog-2015-40,Krejcirik-2017-221} where more general forms of $T$ are analyzed (\eg~with a real magnetic field, complex rotated coefficients, a relatively bounded negative real part of the potentials or singular perturbations controlled by the Laplacian). Estimates on the constant $a_\nabla$ are in Lemma~\ref{lem:gn.est.app}. The equivalences \eqref{Sp.equiv.1}, \eqref{Sp.equiv.2} are a consequence of the characterization of $\Dom(T)$, in particular \eqref{T.norm.est}. Assuming a suitable regularity of $\Omega$, one can also include different boundary conditions (Neumann, Robin), see \cite{Boegli-2017-42} for some details.

%
%
%

\begin{example}\label{ex:Schr.basic}
	Let $Q$ with $\Re Q \geq 0$ be such that 
	\begin{equation}\label{ex.Q.beh}
 |Q(x)| \approx \langle x \rangle^\gamma, \qquad x \in \R^d.
	\end{equation}
	with some $\gamma>0$ and let \eqref{Q.nab.as} and hence \eqref{Q.asm.sep} be satisfied; notice that \eg~in the special case of $Q(x) = \ii \langle x \rangle^\gamma$, we have 
	\begin{equation}
		|\nabla Q(x)| = \BigO(|Q(x)||x|^{-1}), \qquad |x| \to \infty.
	\end{equation}
	From Theorem~\ref{thm:T.basic}, the corresponding Schr\"odinger operator $T$ is m-accretive and has a compact resolvent, moreover, for any $\eps>0$,
	\begin{equation}\label{Qg.Sp}
		(T+1)^{-1} \in \cS_{p_{\gamma,d}+\eps}(L^2(\R^d)), \qquad p_{\gamma,d}= \frac{2+\gamma}{2 \gamma} d;
	\end{equation}
	the latter follows from \eqref{Sp.crit} by Young's inequality.
	\hfill $\blacksquare$
\end{example}

\subsection{Decay of eigenfunctions}
\label{subsec:EF.dec}

Result on the eigenfunctions decay for accretive Schr\"odinger operators can be found in \cite{Krejcirik-2017-221}. A slight adaptation of \cite[Prop.~4.1]{Krejcirik-2017-221} yields the following, (see Appendix for details).

\begin{theorem}\label{thm:EF.dec}
Let Assumption~\ref{asm:Q} be satisfied, $T$ be the Dirichlet realizations of $-\Delta+Q$ in $L^2(\Omega)$, $\Phi$ be as in \eqref{Phi.def} and define
\begin{equation}\label{Qt.def}
\widetilde Q := \Re Q + \Phi \Im Q - \frac1{2}|\nabla\Phi|^2.
\end{equation}
Let $\la \in \spp(T)$ and $\psi,\psi_0 \in \Dom(T)$ satisfy $T \psi = \la \psi + \psi_0$.
Suppose that for this $\la$, there exist an open $\Omega_1 \subset \Omega$, a constant $\delta>0$ and a weight $W \in W^{1,\infty}_{\rm loc}(\Omega;\R)$ such that $e^W \psi_0 \in L^2(\Omega)$, $e^W + |\widetilde Q| + |\nabla W| \in L^\infty(\Omega \setminus \Omega_1)$ and
\begin{equation}\label{Q.W.ineq}
\widetilde Q -3 |\nabla W|^2 - \Re \la - |\Im \la| \geq 2 \delta >0, \quad \text{a.e. in } \Omega_1. 
\end{equation}	
Then there exists $C = C(\la,\delta,\Omega_1,W,\widetilde Q)>0$ such that
\begin{equation}\label{eW.psi}
\|e^W \psi\| \leq C \|\psi\|+ \delta^{-2} \|\e^W \psi_0\|.
\end{equation}	
(For an estimates of $C$ see the proof Theorem~\ref{thm:EF.dec} in Appendix.)	
\end{theorem}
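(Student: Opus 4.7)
The plan is to follow the Agmon-type weighted estimate of \cite[Prop.~4.1]{Krejcirik-2017-221}, with two modifications: first, tracking explicitly the source term $\psi_0$ (which in the original proof is $0$), and second, keeping the dependence on $\la,\delta$ transparent so that the final constant has the stated form. Throughout, set $\phi := \e^W \psi$.

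The first step is to justify the use of $\e^{2W}\psi$ (and $\Phi \e^{2W}\psi$) as test vectors in the form identity $t(\psi, g) = \la \langle \psi, g \rangle + \langle \psi_0, g \rangle$ for $g \in \Dom(t)$. Since $W$ need not be bounded on $\Omega$, I would truncate $W_n := \min(W,n)$ and work with $\phi_n := \e^{W_n}\psi \in W^{1,2}_0(\Omega) \cap \Dom(|Q|^\frac12) = \Dom(t)$. All subsequent estimates are performed with $W_n$ in place of $W$ with constants independent of $n$; the final inequality \eqref{eW.psi} follows by monotone convergence as $n \to \infty$ once we know $\e^W \psi_0 \in L^2(\Omega)$.

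The heart of the argument is the pair of form identities obtained by choosing $g = \e^{2W_n}\psi$ and $g = \Phi\, \e^{2W_n}\psi$ respectively. Taking the real part of the first identity and using the computation $\nabla \phi_n = \e^{W_n}(\nabla\psi + \psi \nabla W_n)$ leads, after a standard integration by parts, to
\begin{equation}
\|\nabla \phi_n\|^2 + \int_\Omega (\Re Q - |\nabla W_n|^2 - \Re\la) |\phi_n|^2 = \Re \langle \e^{W_n}\psi_0, \phi_n \rangle.
\end{equation}
The second choice $g = \Phi \e^{2W_n}\psi$, combined with a Cauchy--Schwarz absorption of the gradient term $\nabla \Phi$, produces the complementary identity containing $\int \Phi \Im Q |\phi_n|^2$ up to an error bounded by $\frac12 \int |\nabla\Phi|^2 |\phi_n|^2$ and a contribution $|\Im \la|\,\|\phi_n\|^2$ from the imaginary part of the eigenvalue coupled to the bounded multiplier $\Phi$. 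Adding the two identities reproduces exactly the quantity $\widetilde Q$ from \eqref{Qt.def}, at the cost of enlarging the coefficient of $|\nabla W_n|^2$ from $1$ to $3$; the latter factor comes from the Cauchy--Schwarz step needed when the gradient of $\Phi e^{2W_n}\psi$ hits $W_n$. The resulting master inequality is
\begin{equation}
\int_\Omega(\widetilde Q - 3|\nabla W_n|^2 - \Re\la - |\Im\la|)|\phi_n|^2 + \|\nabla \phi_n\|^2 \lesssim \|\e^{W_n}\psi_0\|\,\|\phi_n\|.
\end{equation}

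The final step exploits the splitting $\Omega = \Omega_1 \cup (\Omega \setminus \Omega_1)$. On $\Omega_1$ the hypothesis \eqref{Q.W.ineq} makes the integrand on the left at least $2\delta |\phi_n|^2$, while on $\Omega\setminus\Omega_1$ the boundedness of $\e^W$, $|\widetilde Q|$ and $|\nabla W|$ allows us to absorb the integrand into $C\|\psi\|^2$. Estimating the source term by Cauchy--Schwarz with weight $\delta$,
\begin{equation}
\|\e^{W_n}\psi_0\|\,\|\phi_n\|_{L^2(\Omega_1)} \leq \delta \|\phi_n\|_{L^2(\Omega_1)}^2 + (4\delta)^{-1}\|\e^{W_n}\psi_0\|^2,
\end{equation}
and absorbing $\delta \|\phi_n\|_{L^2(\Omega_1)}^2$ into the left-hand side gives the bound $\|\phi_n\|_{L^2(\Omega_1)}^2 \lesssim C \|\psi\|^2 + \delta^{-2}\|\e^{W_n}\psi_0\|^2$, which combined with the trivial bound on $\Omega\setminus\Omega_1$ yields the claim after $n \to \infty$.

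The main technical obstacle is the bookkeeping in the second identity: one must carefully expand $t(\psi, \Phi \e^{2W_n}\psi)$, isolate the sign-indefinite contribution from $\nabla\Phi$, and verify that after an optimal Cauchy--Schwarz the constants match the statement (in particular the factor $3$ multiplying $|\nabla W|^2$ and the $-\frac12|\nabla\Phi|^2$ hidden inside $\widetilde Q$). Tracking the dependence of the prefactor $C$ on $\la$, $\delta$, $\Omega_1$ and the $L^\infty$-data on $\Omega \setminus \Omega_1$ is a routine consequence of these same estimates.
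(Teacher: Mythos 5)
Your proposal is correct and follows essentially the same route as the paper: the paper also tests the form identity against $\e^{2w}\psi$ and $\Phi\,\e^{2w}\psi$ (it imports the resulting weighted coercivity, with the same constant $3$ in front of $|\nabla w|^2$ and the $-\tfrac12|\nabla\Phi|^2$ inside $\widetilde Q$, as a black box from \cite[Thm.~3.3]{Krejcirik-2017-221} rather than rederiving it), then truncates the weight, splits $\Omega$ into $\Omega_1$ and its complement, and absorbs via Young's inequality with parameter $\delta$. The only cosmetic differences are the choice of truncation ($\min(W,n)$ with monotone convergence in your version versus the tent-shaped $\eta_n(W)$ with Fatou's lemma in the paper).
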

%
%

\begin{example}\label{ex:EF.dec} 
Let $Q$ and $T$ be as in Example~\ref{ex:Schr.basic}. It follows from  \eqref{ex.Q.beh}, \eqref{Q.nab.as} and a straightforward estimate of $|\nabla \Phi|$ that there exist $\beta, R>0$ such that
\begin{equation}
\widetilde Q(x) \geq  \beta |x|^\gamma, \quad |x| > R.
\end{equation}
Suppose that we can find a weight $W \in W^{1,\infty}_{\rm loc}(\Rd;\R)$ such that
\begin{equation}\label{nabW.ex}
|\nabla W(x)|^2 \leq \frac{1-\kappa}{3} \widetilde Q(x), \quad |x|>R,
\end{equation}
where $\kappa \in (0,1)$. Then for all $|x|>R$ 
\begin{equation}
\widetilde Q(x)-3|\nabla W(x)|^2 - \Re \la -|\Im \la| \geq 
\kappa \beta |x|^\gamma - \Re \la -|\Im \la|.
\end{equation}
Thus for each $\la \in \C$, $\Re \la \geq 0$, we can find $r_\la \geq R$ such that \eqref{Q.W.ineq} is satisfied with $\Omega_1=\Rd \setminus \ov{B_{r_\la}(0)}$ (and some $\delta>0$).  

A simple weight $W$ satisfying \eqref{nabW.ex} can be selected as a radial function
\begin{equation}\label{W.ex}
W(x):=\sqrt{\beta \frac{1-\kappa}{3}} \int_0^{|x|}  s^\frac \gamma2 \, \dd s. 
\end{equation}
Thus eigenfunctions $\psi$ of $T$ satisfy $e^W \psi \in L^2(\R^d)$ and the same follows for generalized eigenfunctions by a repeated application of Theorem~\ref{thm:EF.dec}. 
	
	Finally, we remark that the result on the eigenfunctions decay will remain valid also in other underlying domains than $\Rd$, \eg~for cones in Example~\ref{ex:Airy.cone} below. 
	\hfill $\blacksquare$	
\end{example}
\begin{example}\label{ex:EF.dec.ex} 
Let $T$ be the Schr\"odinger operator in $L^2(\R)$ with the potential $Q :\R \to \C$ that satisfies with some $\gamma >0$ and $R_0 \geq 0$,
\begin{equation}
|Q(x)| \approx |x|^\gamma \e^x, \quad |x| >R_0	
\end{equation}
and $|Q'(x)| = o(|Q(x)|^{3/2})$ as $x \to + \infty$ (hence Assumption~\ref{asm:Q} is satisfied with arbitrarily small $\eps_\nabla>0$.) Theorem~\ref{thm:EF.dec} applies with $\Omega_1 = (r_\la,+\infty)$ where $r_\la>0$ is sufficiently large so that \eqref{Q.W.ineq} holds for the weight (with a sufficiently small $\kappa>0$)
\begin{equation}
W(x) = \kappa \, \chi_{\R_+}(x) \int_0^x s^{\frac \gamma 2 } \e^\frac s2 \, \dd s, \quad x \in \R. 
\end{equation}
Thus the (generalized) eigenfunctions of $T$ decay fast as $x \to + \infty$.
\hfill $\blacksquare$	
\end{example}

\section{Perturbations in domain and potential}
\label{sec:pert}

For $j=1,2$, we consider the Dirichlet realizations $T_j = -\Delta + Q_j$ in $L^2(\Omega_j)$ with open $\Omega_j \subset \R^d$  as in Section \ref{ssec:Schr} and study the distance of resolvents and spectra.

\begin{asm-sec}\label{asm:pert}
For $j =1,2$, let $\Omega_j \subset \R^d$ be open and let $Q_j$ satisfy Assumption~\ref{asm:Q} with constants $\eps_{\nabla,j}$, $M_{\nabla,j}$. Let $T_j =-\Delta+Q_j$ be the  Dirichlet realizations in $L^2(\Omega_j)$ defined via the forms $t_j$ given in Theorem~\ref{thm:T.basic}.
Let $\Omega_0:=\Omega_1 \cup \Omega_2$ and suppose that $\Omega_1$ and $\Omega_2$ are such that there exists a cut-off $\xi: \Omega_0 \to [0,1]$ satisfying that $\chi_{\Omega_1 \cap \Omega_2}\xi = \xi$ on $\Omega_0$, $|\nabla \xi| + \Delta \xi \in L^\infty(\Omega_0)$ and
	\begin{equation}\label{xi.tt0}
		\begin{aligned}
			&\forall f \in \Dom(T_1), \quad \xi f   \in \Dom(t_2),
			\\
			&\forall g \in \Dom(T_2), \quad \xi g  \in \Dom(t_1);
		\end{aligned}
	\end{equation} 
	here we understand $\xi f$ as 
\begin{equation}\label{domain.cond}
	\begin{cases}
		\xi(x) f(x), & x \in \Omega_2 \cap \Omega_1, 
		\\
		0, & x \in \Omega_2 \setminus \Omega_1,
	\end{cases}
\end{equation}
and analogously for $\xi g$. 
%
\hfill $\blacksquare$	
\end{asm-sec}

An illustration of a choice of suitable cut-off $\xi$ is in Figure~\ref{fig:cut-off}.  
\begin{figure}[htb!]
	\includegraphics[width=0.6\textwidth]{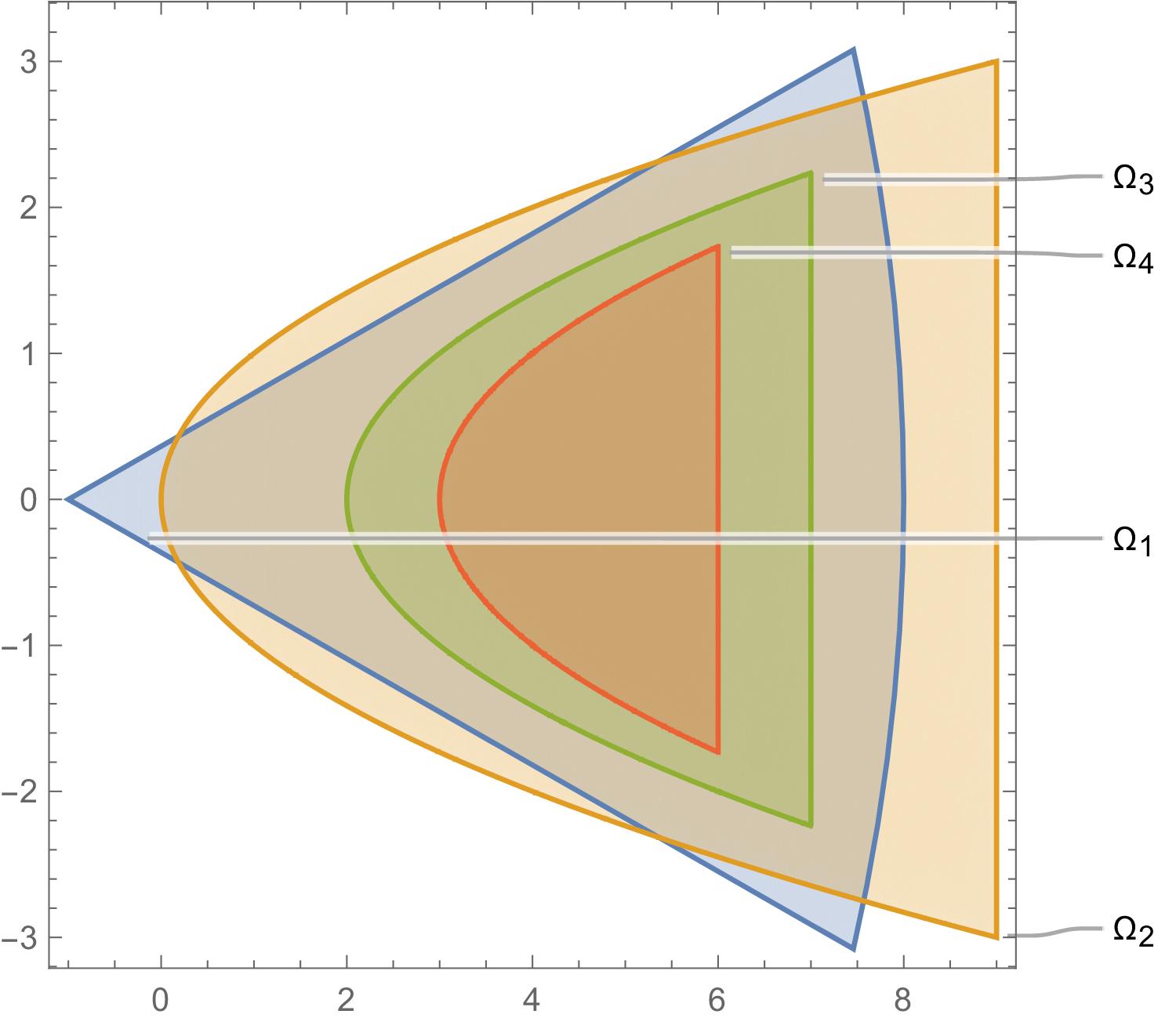}
	\caption{The domains $\Omega_1$ (blue) and $\Omega_2$ (yellow) are taken as a part of sector and parabola, respectively. One can construct $\xi \in C^{\infty}(\Omega_0)$ with $\Omega_0:=\Omega_1 \cup \Omega_2$ such that $\xi=1$ on $\Omega_4 \subset \Omega_1 \cap \Omega_2$ (orange)  and $\xi=0$ on the complement of $\Omega_3$ (green) in $\Omega_0$. Since $\supp \xi$ is bounded, the conditions \eqref{xi.tt0} are satisfied for any admissible $Q_1$, $Q_2$ (which is not the case in general for unbounded $\Omega_1, \Omega_2$ and unbounded $\supp \xi$).}
	\label{fig:cut-off}
\end{figure}

\subsection{Resolvent difference estimate}
For $j=1,2$ and $z \in \rho(T_j)$, we write  $R_j(z):=(T_j-z)^{-1}$, $j=1,2$. We introduce 
\begin{equation}\label{zeta.def}
\widetilde \xi := 1 -\xi,	\qquad \zeta := \chi_{\supp \widetilde \xi}
\end{equation}
where $\xi$ is as in Assumption~\ref{asm:pert}; see also Figure~\ref{fig:cut-off}. Notice that
\begin{equation}\label{zeta.xi}
 \zeta \widetilde \xi = \widetilde \xi, \quad \zeta \nabla \xi = \nabla \xi, \quad \zeta \Delta \xi = \Delta \xi.
\end{equation}
In $L^2(\Omega_0)$, let $P_j$, $P$ and $\widetilde P$ be the following orthogonal projections 
\begin{equation}\label{Pj.def}
P_j f = \chi_{\Omega_j} f, \quad P f = \chi_{\Omega_1 \cap \Omega_2} f,  \quad \widetilde P:=I-P,\qquad f \in L^2(\Omega_0).
\end{equation}
\begin{theorem}\label{thm:res}
For $j=1,2$, let $\Omega_j$, $T_j$ and $\xi$ be as in Assumption \ref{asm:pert}, let $\zeta$ be as in \eqref{zeta.def} and let $P_j$ be as in \eqref{Pj.def}. Then there exists a constant $K \geq 0$, depending only on  $\|\nabla \xi\|_{L^\infty}$ and $\eps_{\nabla,j}$, $M_{\nabla,j}$, such that
\begin{equation}\label{TT0.res.est}
\begin{aligned}				
& \|R_1(-1)P_1 - R_2(-1) P_2 \|_{\cB(L^2(\Omega_0))}
\\ & \qquad 
\leq K 
\left( 
\left\| \frac{\xi (Q_1-Q_2)}{(Q_1+1)(Q_2+1)} \right\|_{L^\infty(\Omega_1 \cap \Omega_2)} + \sum_{j=1}^2\left\| \frac{\zeta }{Q_j+1}
 \right\|_{L^\infty(\Omega_j)} 
\right).
\end{aligned}
\end{equation}

If in addition $R_j(-1)\in \cS_p(L^2(\Omega_j))$ with some $p>0$, then for every $q \in (p,\infty)$ there exists a constant $K_q > 0$ such that
\begin{equation}\label{T.Sp}
		\|R_1(-1)P_1 - R_2(-1) P_2\|_{\cS_q} 
		\leq K_q \|R_1(-1)P_1 - R_2(-1) P_2 \|_{\cB(L^2(\Omega_0))}^{1-\frac pq}.
\end{equation}
\end{theorem}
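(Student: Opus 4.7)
The plan is to fix $f \in L^2(\Omega_0)$ with $\|f\|=1$, set $u_j := R_j(-1) P_j f$ (extended by zero outside $\Omega_j$), and split
\begin{equation*}
u_1 - u_2 = \xi(u_1 - u_2) + \widetilde\xi\, u_1 - \widetilde\xi\, u_2,
\end{equation*}
where $\widetilde\xi := 1-\xi$. For the two ``tail'' terms $\widetilde\xi\, u_j$ I would use $\zeta\widetilde\xi=\widetilde\xi$ (so $\|\widetilde\xi\, u_j\| \le \|\zeta u_j\|$) together with the pointwise bound
\begin{equation*}
\|\zeta u_j\|_{L^2(\Omega_j)} \le \Bigl\|\tfrac{\zeta}{Q_j+1}\Bigr\|_{L^\infty(\Omega_j)}\|(Q_j+1) u_j\|.
\end{equation*}
The graph-norm separation of Theorem~\ref{thm:T.basic}~\ref{thm:T.basic.gn} combined with $m$-accretivity (giving $\|T_j u_j\|\le 2\|f\|$) bounds $\|(Q_j+1)u_j\|$ by a constant depending only on $\eps_{\nabla,j},M_{\nabla,j}$. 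These terms produce the $\sum_j\|\zeta/(Q_j+1)\|_{L^\infty}$ summands of \eqref{TT0.res.est}.

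For the bulk contribution $\xi u_1 - \xi u_2$ the crux is a form identity. Under \eqref{xi.tt0} one verifies that both $\xi u_j$ belong to $\Dom(T_2)$, and a direct form computation (using $(T_j+1)u_j = P_j f$, $\supp\xi\subset\Omega_1\cap\Omega_2$ and $P_j f = f$ on $\supp\xi$) yields
\begin{equation*}
(T_2+1)(\xi u_1 - \xi u_2) = \xi(Q_2-Q_1)u_1 + B_\xi(u_1 - u_2), \quad B_\xi u := -2\nabla\xi\cdot\nabla u - (\Delta\xi) u.
\end{equation*}
I would then dualise: for $h \in L^2(\Omega_2)$ with $\|h\|=1$, put $v := R_2(-1)^* h \in \Dom(T_2^*)$, noting that by $\cC$-self-adjointness (Theorem~\ref{thm:T.basic}) $T_2^*$ satisfies the same graph-norm separation as $T_2$, so that $\|(Q_2+1)v\|\lesssim 1$ and $\|\nabla v\|+\|v\|\lesssim 1$. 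The pairing splits as
\begin{equation*}
\langle \xi u_1 - \xi u_2, h\rangle = \langle \xi(Q_2-Q_1)u_1, v\rangle + \langle B_\xi(u_1-u_2), v\rangle,
\end{equation*}
and the first, main, term is handled by the scalar pointwise factorisation
\begin{equation*}
\xi(Q_2-Q_1)u_1\, \overline v = \frac{\xi(Q_2-Q_1)}{(Q_1+1)(Q_2+1)}\cdot(Q_1+1)u_1\cdot(Q_2+1)\overline v
\end{equation*}
followed by Cauchy--Schwarz. This is the elegant step where the two resolvents jointly manufacture the double denominator $(Q_1+1)(Q_2+1)$ required by \eqref{TT0.res.est}.

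For the $B_\xi$ contribution I would integrate by parts to move the gradient from $u_1-u_2$ onto $v$, obtaining
\begin{equation*}
\langle B_\xi(u_1-u_2), v\rangle = 2\int (u_1-u_2)\nabla\xi\cdot\overline{\nabla v} + \int (u_1-u_2)(\Delta\xi)\overline v.
\end{equation*}
Since $\supp\nabla\xi\cup\supp\Delta\xi\subset\supp\zeta$ by \eqref{zeta.xi}, the factor $(u_1-u_2)$ effectively acquires a cut-off $\zeta$, and the $\|\zeta u_j\|$ bounds from the first step combine with $\|\nabla v\|+\|v\|\lesssim 1$ to give exactly the remaining $\|\zeta/(Q_j+1)\|_{L^\infty}$ terms. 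Taking $\sup_{\|h\|=1}$ and then $\sup_{\|f\|=1}$ completes the norm bound. Finally, \eqref{T.Sp} follows from the classical interpolation inequality $\|A\|_{\cS_q}\le\|A\|_{\cS_p}^{p/q}\|A\|^{1-p/q}$ applied to $A := R_1(-1)P_1 - R_2(-1)P_2$, together with $\|A\|_{\cS_p}\le\|R_1(-1)\|_{\cS_p(L^2(\Omega_1))}+\|R_2(-1)\|_{\cS_p(L^2(\Omega_2))}$ (since the $P_j$ are orthogonal projections), which fixes $K_q$ as a suitable power of this sum.

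The main obstacle I expect is the rigorous justification that $\xi u_1$ actually lies in $\Dom(T_2)$ and the strong-sense validity of the form identity above: a priori \eqref{xi.tt0} only places $\xi u_1$ in $\Dom(t_2)$, and upgrading to the operator domain requires checking that $\xi f + \xi(Q_2-Q_1)u_1 + B_\xi u_1$ is in $L^2(\Omega_2)$, which in turn rests on the local regularity $Q_j\in W^{1,\infty}_{\rm loc}$ of Assumption~\ref{asm:Q} and on the structural fact $\supp\xi\subset\Omega_1\cap\Omega_2$. Tracking the explicit dependence of the constant $K$ on $\|\nabla\xi\|_{L^\infty}$ and the separation constants $\eps_{\nabla,j},M_{\nabla,j}$ through these estimates is routine but tedious.
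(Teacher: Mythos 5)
Your strategy coincides with the paper's: commute the cut-off $\xi$ through the operators to obtain a resolvent-type identity, then control each term by the graph-norm separation of Theorem~\ref{thm:T.basic}~\ref{thm:T.basic.gn} (which makes $(Q_j+1)R_j(-1)$, $(Q_j+1)R_j(-1)^*$ and $\nabla R_j(-1)$, $\nabla R_j(-1)^*$ bounded uniformly in $\eps_{\nabla,j},M_{\nabla,j}$) together with the weights $\zeta(Q_j+1)^{-1}$. The tail estimate, the factorisation producing the double denominator $(Q_1+1)(Q_2+1)$, and the H\"older/interpolation argument for \eqref{T.Sp} all match the paper's proof.

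The one genuine gap is the point you flag but do not close: your bulk computation is a \emph{strong} operator identity and requires $\xi u_1\in\Dom(T_2)$, whereas Assumption~\ref{asm:pert} only yields $\xi u_1\in\Dom(t_2)$. The upgrade needs $Q_2\,\xi u_1\in L^2(\Omega_2)$, and this does \emph{not} follow from $u_1\in\Dom(Q_1)$ plus $Q_2\in W^{1,\infty}_{\rm loc}$ when $\supp\xi$ is unbounded --- which is precisely the regime the theorem is built for ($Q_2$ may grow much faster than $Q_1$ on $\supp\xi$, and local regularity gives no global control). The paper sidesteps this entirely by never leaving the form level: it computes $\langle\xi F,T_2^*G\rangle-\langle T_1F,\xi G\rangle=t_2(\xi F,G)-t_1(F,\xi G)$ with $F=R_1(z)Pf$, $G=R_2(z)^*Pg$, for which \eqref{xi.tt0} is exactly sufficient, and reads off the weak identity \eqref{res.weak}. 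Since your dualisation against $v=R_2(-1)^*h$ is morally this computation, the repair is to replace your pointwise identity for $(T_2+1)(\xi u_1-\xi u_2)$ by the corresponding form identity; as written, the step is not justified. A second, smaller defect: your full integration by parts in the $B_\xi$ term leaves a $(\Delta\xi)(u_1-u_2)\overline{v}$ contribution, so your constant would also depend on $\|\Delta\xi\|_{L^\infty}$, contrary to the statement. Integrating by parts only one of the two gradient terms cancels $\Delta\xi$ and produces the paper's antisymmetric combination $\langle(\nabla\xi)F,\nabla G\rangle-\langle\nabla F,(\nabla\xi)G\rangle$, which needs only $\|\nabla\xi\|_{L^\infty}$.
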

\begin{remark}\label{rem:sect}	
The proof of \eqref{TT0.res.est} is based on the ``maximal estimate'' \eqref{T.norm.est} for the graph norms which results in terms with $|Q_j+1|^{-1}$. An analogue of \eqref{TT0.res.est} holds if weaker lower estimates of the graph norms are available, \eg~in the sectorial case with $Q_j \in L^1_{\rm loc}(\Omega_j)$, the denominators in \eqref{TT0.res.est} would contain $|Q_j+1|^{-1/2}$ instead. 

The claim \eqref{TT0.res.est} can be extended to all  $z \in \rho(T_1) \cap \rho(T_2)$ 
\begin{equation}\label{TT0.res.est.z}
	\begin{aligned}
	 \|R_1(z) P_1 - R_2(z) P_2\| & \leq \| (I - (z+1) R_1(z) P_1)^{-1} \| \| I + (z+1) R_2(z) P_2 \|  \times
		\\
		 & \qquad \|R_1(-1) P_1 - R_2(-1) P_2\|;
	\end{aligned}
\end{equation}
the proof employs resolvent identities in a straightforward way.
\hfill $\blacksquare$	
\end{remark}

\begin{proof}[Proof of Theorem~\ref{thm:res}]
We establish a resolvent-type identity (for all $f,g \in L^2(\Omega_0)$)
\begin{equation}\label{res.weak}
	\begin{aligned}
		& \langle (R_1(z)P_1-R_2(z)P_2)f,g \rangle  =
		\\&  \quad \langle R_2(z) \xi (Q_2-Q_1) R_1(z) P f,  P   g \rangle
		\\&  \quad + \langle (\nabla \xi) R_1(z) P f, \nabla R_2(z)^* P g \rangle - \langle \nabla R_1(z) P f, (\nabla \xi)  R_2(z)^* P g \rangle
		\\&  \quad + \langle  \widetilde \xi R_1(z) P f,  g \rangle - \langle f, \widetilde \xi  R_2(z)^* P g \rangle 
		+ \langle R_1(z) P_1 \widetilde P f, g \rangle   - \langle  f, R_2(z)^* P_2  \widetilde P g \rangle.
	\end{aligned}
\end{equation}
To this end, we analyze the first term after the following splitting
\begin{equation}\label{res.1}
	\begin{aligned}
		\langle (R_1(z)P_1-R_2(z)P_2)f,g \rangle & = \langle (R_1(z)-R_2(z))Pf,Pg \rangle 
		\\ & \quad +
		\langle (R_1(z)P_1-R_2(z)P_2) \widetilde P f,g \rangle 
		\\ & \quad + \langle (R_1(z)-R_2(z)) P f, \widetilde P g \rangle.
	\end{aligned}
\end{equation}
Let $F:=R_1(z) P f$ and $G:=R_2(z)^* P g$, then, inserting $1=\xi+\widetilde \xi$, we obtain
\begin{equation}\label{res.2}
	\begin{aligned}
		\langle (R_1(z)-R_2(z))Pf,Pg \rangle &= 
		\langle \xi F, T_2^* G \rangle - \langle  T_1 F, \xi G \rangle
		\\
		& \quad + \langle \widetilde \xi R_1(z) Pf, Pg \rangle - \langle Pf , \widetilde \xi R_2(z)^* P g \rangle.
	\end{aligned}
\end{equation}
Using the assumption \eqref{xi.tt0}, we get
\begin{equation}\label{res.3}
	\begin{aligned}
	&	\langle \xi F, T_2^* G \rangle - \langle  T_1 F, \xi G \rangle
		= t_2(\xi F,G) - t_1(F, \xi G)
		\\& \quad 
		= \langle R_2(z) \xi (Q_2-Q_1) R_1(z) P f,  P   g \rangle 
		+  \langle (\nabla \xi) R_1(z) P f, \nabla R_2(z)^* P g \rangle  
		\\ & \quad \quad - \langle \nabla R_1(z) P f, (\nabla \xi) R_2(z)^* P g  \rangle 
	\end{aligned}
\end{equation}
Putting together \eqref{res.1}, \eqref{res.2}, \eqref{res.3} and using $\widetilde \xi \widetilde P =\widetilde P$ we arrive at \eqref{res.weak}.

Next, we employ \eqref{res.weak} with $z=-1$ and 
	\begin{equation}\label{norm.weak}
		\|h\| = \sup_{g \neq 0} \frac{|\langle h,g \rangle|}{\|g\|}.
	\end{equation}
	Denoting $R_1:=R_1(-1)$, $R_2:=R_2(-1)$ and considering $\zeta$ in \eqref{zeta.def}, we get
	\begin{equation*}
		\begin{aligned}
			\|R_1P_1-R_2P_2\| & \leq   \| R_2 \xi (Q_2-Q_1) R_1 P\|  + 
			 \|\nabla \xi\|_{L^\infty}(\|\zeta R_1\| \|\nabla R_2^*\| + \|\zeta R_2^*\| \|\nabla R_1\|) 
			\\ & \quad +\|\zeta R_1\| + \|\zeta R_2\| + \|\zeta R_1^*\| + \|\zeta R_2^*\|.
		\end{aligned}
	\end{equation*}

	In the sequel $j=1,2$. We show that $(Q_j+1) R_j$, $(Q_j+1) R_j^*$ and $\nabla R_j$, $\nabla R_j^*$ are bounded operators. To this end, observe that 
	it follows from \eqref{T.norm.est} and \eqref{T.adjoint} that there are constants $C_{T_j} >0$ such that for all $f_j \in \Dom(T_j) = \Dom(T_j^*)$ 
	\begin{equation}\label{T.gn.eq}
		\begin{aligned}
			\|(T_j+1)f_j\| + \|f_j\| &\geq C_{T_j} (\|\Delta f_j\| + \|(Q_j+1) f_j\| + \|f_j\|),
			\\
			\|(T_j^*+1)f_j\| + \|f_j\| &\geq C_{T_j} (\|\Delta f_j\| + \|(Q_j+1) f_j\| + \|f_j\|),
		\end{aligned}
	\end{equation}
	and $C_{T_j}$ depend only on $\eps_{\nabla,j}$ and $M_{\nabla,j}$ from \eqref{Q.asm.sep}. 
	
	Using a numerical range argument, we have $\|R_1\| \leq 1$, thus 
	\begin{equation}\label{QR.est}
	\|(Q_1+1)R_1 f\| \leq \frac{1 + \|R_1\| }{C_{T_1}}  \|f\|\leq \frac{2}{C_{T_1}} \|f\|, \quad  f \in L^2(\Omega_1);
	\end{equation}
	the estimate of other similar terms is analogous. 
	Furthermore, as for all $g \in \Dom(T_1)$ we have $\|\nabla g\|^2 \leq \|\Delta g\| \|g\|$, 
	we get from \eqref{T.gn.eq} that
	\begin{equation}
		\|\nabla R_1 f\|^2 \leq \|\Delta R_1 f\| \|R_1 f\| \leq \frac{2}{C_{T_1}} \|f\|^2, \quad f \in L^2(\Omega_1);
	\end{equation}
	the estimate of other similar terms is analogous. 
	
	Finally, by inserting $1 = (Q_1+1)^{-1} (Q_1+1)$, we get
	\begin{equation}\label{T.eta.est}
		\begin{aligned}
			\|\zeta R_1 \| &\leq \| \zeta (Q_1+1)^{-1} \| \|(Q_1+1) R_1 \| 
			\leq
			\frac{2}{C_{T_1}}  \| \zeta (Q_1+1)^{-1} \|
		\end{aligned}
	\end{equation}
	and analogously for the remaining terms. The claim \eqref{TT0.res.est} follows by putting the estimates above together.	
	The estimate with Schatten norms \eqref{T.Sp} follows by H\"older's inequality in $\cS_p$, see~\eg~\cite[Lem.~XI.9]{DS2}.
\end{proof}

\subsection{Eigenvalues and eigenfunctions convergence}

Let $T_j$, $j=,1,2$, be as in Assumption~\ref{asm:pert} and let $\mu \in \sigma(T_1)$ be an isolated eigenvalue of finite algebraic multiplicity $m \in \N$. If the gap distance of $T_1$ and $T_2$ is sufficiently small (or equivalently the norm of the difference of resolvents estimated in Theorem~\ref{thm:res}), then $\sigma(T_2)$ contains exactly $m$ eigenvalues $\{\mu_k\}_{k=1}^m$ in a neighborhood of $\mu$ (counting with multiplicities). This follows by estimating the norm of difference of spectral projections (with a suitable contour $\gamma_\mu$)
\begin{equation}\label{E.def}
	E_1:= \frac1{2\pi \ii} \int_{\gamma_\mu} (z-T_1)^{-1} P_1 \, \dd z, \qquad  E_2:=\frac1{2\pi \ii} \int_{\gamma_\mu} (z-T_2)^{-1} P_2 \, \dd z;
\end{equation}
for details see \eg~\cite[Thm.~5.1]{Boegli-2017-42}, \cite{Osborn-1975-29}, \cite[Chap.~IV]{Kato-1966}. 

Our goal is to estimate the distance of $\mu$ and the average of $\mu_k$
\begin{equation}\label{hla.def}
	\hat \mu := \frac1m \sum_{k=1}^m \mu_k
\end{equation}
and the distance of eigenfunctions. Notice that the estimate \eqref{TT0.res.est} relates the resolvent difference with a decay of~$|Q|^{-1}$. Nevertheless, the convergence rate of eigenvalues and eigenfunctions is typically much faster as these are related to the decay of eigenfunctions, for an illustration see~Corollary~\ref{cor:n} and Theorem~\ref{thm:dom.tr} below.

\begin{theorem}\label{thm:rates}
For $j=1,2$, let $\Omega_j$, $T_j$ and $\xi$ be as in Assumption \ref{asm:pert}, let $\zeta$ be as in \eqref{zeta.def}, let $P_j$, $P$ and $\widetilde P$ be as in \eqref{Pj.def}. Let $\mu \in \sigma(T_1)$ be an isolated eigenvalue of finite algebraic multiplicity $m \in \N$. Suppose further that $\Omega_j$ and $Q_j$, $j=1,2$, are such that the spectral projections $E_j$, $j=1,2$, in \eqref{E.def} satisfy $\|E_1-E_2\| <1$. Then the following hold. 

\begin{enumerate}[\upshape i), wide]
\item  Let $\hat \mu$ be as in \eqref{hla.def}, then
	\begin{equation}\label{la.conv.est}
		|\mu-\hat \mu| \leq C_{1,\mu} \max_{\substack{\phi \in \Ran (E_1) \\ \|\phi\|=1}} \left\| \frac{\xi (Q_1-Q_2)}{(Q_1+1)(Q_2+1)} \phi \right\| + C_{2,\mu} \max_{\substack{\phi \in \Ran (E_1) \\ \|\phi\|=1}} \| \zeta \phi\|,
	\end{equation}
\item For all $ \psi \in \Ran(E_1)$, we have
	\begin{equation}\label{EF.conv}
		\begin{aligned}			
		\|\psi - E_2 \psi\| & \leq D_{1,\mu}\max_{\substack{\phi \in \Ran (E_1) \\ \|\phi\|=1}} \left\| \frac{\xi (Q_1-Q_2)}{(Q_1+1)(Q_2+1)} \phi \right\| +  D_{2,\mu} \max_{\substack{\phi \in \Ran (E_1) \\ \|\phi\|=1}} \| \zeta \phi\| 
		\\
		& \quad+ \|E_2\| \max_{\substack{\phi_2 \in \Ran (E_2) \\ \|\phi_2\|=1}} \| \widetilde P \phi_2\| , 
		\end{aligned}
	\end{equation}
(For estimates of the constants $C_{j,\mu}$ and $D_{j,\mu}$, $j=1,2$, see \eqref{CD.mu} below.)
\end{enumerate}
\end{theorem}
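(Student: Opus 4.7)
The plan is to combine the weak-form resolvent identity~\eqref{res.weak} from the proof of Theorem~\ref{thm:res} with the approach of Osborn~\cite{Osborn-1975-29} to eigenvalue convergence rates (see also~\cite[Chap.~IV]{Kato-1966} and~\cite[Thm.~5.1]{Boegli-2017-42}). For part~(i), since $\|E_1 - E_2\| < 1$, the map $V := E_2|_{\Ran(E_1)}$ is a bijection onto $\Ran(E_2)$, so $V^{-1} T_2 V$ on $\Ran(E_1)$ has eigenvalues $\{\mu_k\}_{k=1}^m$ whereas $T_1|_{\Ran(E_1)}$ has $\mu$ of multiplicity $m$; this yields the trace identity $m(\hat\mu - \mu) = \Tr_{\Ran(E_1)}[V^{-1} T_2 V - T_1]$. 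Fixing a basis $\{\phi_i\}_{i=1}^m$ of $\Ran(E_1)$ with biorthogonal $\{\phi_i^*\} \subset \Ran(E_1^*)$, we set $\phi_i^\sharp := (V^*)^{-1}\phi_i^* \in \Ran(E_2^*)$ (the identity $V^* = E_1^*|_{\Ran(E_2^*)}$ implies $\langle\phi_j, \phi_i^\sharp\rangle = \delta_{ij}$). Using the Riesz representations $(T_j - \mu) E_j = -\tfrac{1}{2\pi\ii}\oint_{\gamma_\mu}(z - \mu) R_j(z) P_j\,\dd z$ and the Laurent expansion $R_1(z)\phi = -\sum_{k=0}^{m-1}(z-\mu)^{-k-1}(T_1 - \mu)^k \phi$ for $\phi \in \Ran(E_1)$, the $R_1 P_1$-contribution to the trace vanishes (it reproduces $\Tr(T_1 - \mu)|_{\Ran(E_1)} = 0$), and we arrive at
\[
 m(\hat\mu - \mu) = -\frac{1}{2\pi\ii}\oint_{\gamma_\mu}(z - \mu) \sum_{i=1}^m \langle (R_2(z) P_2 - R_1(z) P_1)\phi_i, \phi_i^\sharp\rangle\, \dd z.
\]

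Applying~\eqref{res.weak} with $f = \phi_i$, $g = \phi_i^\sharp$, we bound each of the seven terms after contour integration. The pivotal first term is handled via the factorisation
\[
 R_2(z)\,\xi(Q_2 - Q_1)\,R_1(z) = \bigl[R_2(z)(Q_2+1)\bigr] \cdot \frac{\xi(Q_1 - Q_2)}{(Q_1+1)(Q_2+1)} \cdot \bigl[(Q_1+1) R_1(z)\bigr],
\]
both bracketed operator factors being uniformly bounded on $\gamma_\mu$ by~\eqref{T.gn.eq}, yielding the first summand of~\eqref{la.conv.est}. The remaining six terms carry multipliers $\nabla\xi$, $\widetilde\xi$, or $\widetilde P$, each majorised in absolute value by $\zeta$ (the first two via $\zeta\nabla\xi = \nabla\xi$ and $\zeta\widetilde\xi = \widetilde\xi$ from~\eqref{zeta.xi}, and the third via $\supp\xi \subset \Omega_1 \cap \Omega_2$, which forces $\widetilde P \leq \zeta$); combined with the bounds on $\nabla R_j(z)$ and $(Q_j+1) R_j(z)$, they produce the second summand of~\eqref{la.conv.est}. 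The constants $C_{1,\mu}, C_{2,\mu}$ collect the contour length $|\gamma_\mu|$, the uniform resolvent bounds on $\gamma_\mu$, and the norms $\|\phi_i^\sharp\|$ (controlled via the Neumann series for $(V^*)^{-1}$ under $\|E_1 - E_2\| < 1$).

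For part~(ii), for $\psi \in \Ran(E_1)$ we write $\psi - E_2\psi = (E_1 - E_2)\psi = -\tfrac{1}{2\pi\ii}\oint_{\gamma_\mu}(R_1 P_1 - R_2 P_2)\psi\,\dd z$ and estimate $\|\psi - E_2\psi\| = \sup_{\|g\|=1}|\langle\psi - E_2\psi, g\rangle|$ via~\eqref{res.weak} applied with $f = \psi$. The first six terms produce the $D_{1,\mu}$- and $D_{2,\mu}$-contributions exactly as in part~(i). The seventh term $-\langle\psi, R_2(z)^* P_2 \widetilde P g\rangle = -\langle R_2(z) P_2 \psi, \widetilde P g\rangle$ is evaluated via the residue identity $-\tfrac{1}{2\pi\ii}\oint_{\gamma_\mu} R_2(z) P_2\,\dd z = E_2$: its total contribution to $\langle\psi - E_2\psi, g\rangle$ is $-\langle\widetilde P E_2\psi, g\rangle$, whose supremum over $\|g\|=1$ is bounded by $\|\widetilde P E_2\psi\| \leq \|E_2\|\max_{\phi_2 \in \Ran(E_2),\|\phi_2\|=1}\|\widetilde P\phi_2\|$, matching the last summand of~\eqref{EF.conv}. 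The principal obstacle is the careful bookkeeping of the seven contour integrations: verifying that the $(I - E_j)$-parts vanish (being analytic inside $\gamma_\mu$), controlling the biorthogonal family $\{\phi_i^\sharp\}$ uniformly via the Neumann series for $(V^*)^{-1}$, and extracting the sharp $\phi$-dependent norms in place of the $L^\infty$ bounds of Theorem~\ref{thm:res} through the insertions of $(Q_j+1)^{\pm 1}$.
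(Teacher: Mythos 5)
Your part (ii) follows the paper's route essentially verbatim (contour integration of \eqref{res.weak}, with the $\widetilde P$-term evaluated by residues to give $\widetilde P E_2\psi$), while your part (i) takes a genuinely different path: the paper never integrates over the contour there. Instead it computes $\hat T_1 f_k - \hat T_2 f_k$ directly for an orthonormal basis $\{f_k\}$ of $\Ran(E_1)$, using $\xi f_k \in \Dom(T_1)\cap\Dom(T_2)$, the identity $T_1\xi f_k - T_2\xi f_k = (Q_1-Q_2)\xi f_k$ and the commutator of $T_1$ with $\widetilde\xi$, see \eqref{TT0fj}; this keeps the multiplication operators acting directly on $f_k\in\Ran(E_1)$ and avoids both the contour maxima of $(z-\mu)R_j(z)$ and the biorthogonal family $\{\phi_i^\sharp\}$. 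Your Osborn-style trace formula is a legitimate alternative (the trace identity and the Laurent-series argument for the vanishing of the $R_1P_1$ contribution are correct), but it makes the extraction of the $\Ran(E_1)$-restricted norms more delicate than you acknowledge: in your factorisation the middle multiplier acts on $(Q_1+1)R_1(z)P\phi_i$, which is not in $\Ran(E_1)$, so one must first split $R_1(z)P\phi_i = R_1(z)\phi_i - R_1(z)\widetilde P\phi_i$ (the second piece absorbed into the $\zeta$-term via $\widetilde P\le\zeta$) and then reorder so that the multiplier $\xi(Q_1-Q_2)(Q_1+1)^{-1}(Q_2+1)^{-1}$ acts on $R_1(z)\phi_i\in\Ran(E_1)$, pushing the factors $(Q_1+1)(Q_2+1)$ onto $R_2(z)^*P\phi_i^\sharp$.

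The one step that does not go through as written is the second $\nabla\xi$-term of \eqref{res.weak}, namely $\langle\nabla R_1(z)Pf,(\nabla\xi)R_2(z)^*Pg\rangle$. Here $\zeta$ multiplies $R_2(z)^*Pg$, not a vector close to $\Ran(E_1)$: in part (ii), where $g$ is arbitrary, the bound $\|\zeta R_2(z)^*Pg\|\le\|R_2(z)^*\|\|g\|$ produces no factor $\max_{\phi\in\Ran(E_1),\,\|\phi\|=1}\|\zeta\phi\|$ at all, and in part (i) it yields at best a maximum over $\Ran(E_2^*)$ rather than the stated one. This is precisely why the paper integrates by parts to rewrite the two $\nabla\xi$-terms as $2\langle(\nabla\xi)R_1(z)Pf,\nabla R_2(z)^*Pg\rangle+\langle(\Delta\xi)R_1(z)Pf,R_2(z)^*Pg\rangle$, placing all cut-off derivatives on the $R_1(z)Pf$ side (whence the $\|\Delta\xi\|_{L^\infty}$ appearing in $D_{2,\mu}$). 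With that symmetrization inserted, your argument closes.
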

\begin{proof}
We follow standard regular perturbation theory arguments, see~\eg~\cite[Thm.~2]{Osborn-1975-29} or \cite[Chap.~XII.2]{Reed4} for details.
\begin{enumerate}[\upshape i), wide]
\item Since $E_1$ and $E_2$ are sufficiently close,  $E_2 \restriction \Ran(E_1) : \Ran (E_1) \to \Ran (E_2)$ is bijective and for
$F_2:=(E_2 \restriction \Ran(E_1))^{-1}: \, \Ran(E_2) \to \Ran (E_1)$
we get from $\|E_1f\| - \|E_2f\| \leq \|(E_1-E_2)f\|$ that 	$\|F_2\| \leq (1-\|E_1-E_2\|)^{-1}$.
Moreover, $F_2 E_2\restriction \Ran(E_1) = I \restriction \Ran(E_1)$ and $E_2 F_2 \restriction \Ran(E_2) = I \restriction \Ran(E_2)$. We define $\hat T_1:=T_1 E_1$ and $\hat T_2 := F_2 T_2 E_2 \restriction \Ran(E_1)$ to obtain 
\begin{equation}\label{la.est.1}
\mu-\hat \mu = \frac1m \Tr (\hat T_1 - \hat T_2) = \frac1m \sum_{k=1}^m \langle  (\hat T_1 - \hat T_2) f_k, f_k \rangle,
\end{equation}
where $\{f_k\}_{k=1}^m$ is an orthonormal basis of $\Ran(E_1)$. Using the assumption \eqref{xi.tt0}, one can check that $\xi f_k \in \Dom(T_j)$, $j=1,2$, $k=1,\dots,m$, thus (with $\widetilde \xi:=1-\xi$)
\begin{equation}\label{TT0fj}
\begin{aligned}
\hat T_1 f_k - \hat T_2 f_k &= F_2 E_2 T_1 \xi f_k - F_2 E_2 T_2 \xi f_k + F_2 E_2 T_1 \widetilde \xi f_k - F_2 T_2 E_2 \widetilde \xi f_k 
\\ &
= F_2  E_2 (Q_1-Q_2) \xi f_k - F_2 T_2 E_2 \widetilde \xi f_k 
\\
& \qquad
 + 
F_2  E_2 
(
 \widetilde \xi T_1 f_k - 2 \nabla \xi. \nabla f_k  - (\Delta \xi) f_k 
).
\end{aligned}
\end{equation}
Since $f_k \in \Ran (E_1)$, we can estimate 
\begin{equation}
\|\widetilde \xi T_1 f_k\| \leq \frac {\|\widetilde \xi T_1 f_k\|}{\|T_1 f_k\|} \|T_1 E_1 f_k\| 
\leq \max_{\substack{\phi \in \Ran (E_1) \\ \|\phi\|=1}} \| \zeta \phi\| \|T_1 E_1\|.
\end{equation}
The remaining terms are estimated in a straightforward way; we use that $\nabla \xi. \nabla f_k = \nabla \xi. \nabla (\zeta f_k)$ and $E_2 \nabla \xi. \nabla$ has a bounded extension.

\item Consider $\psi \in \Ran (E_1)$. Using \eqref{E.def} and the resolvent identity \eqref{res.weak}, we get
\begin{equation}
\langle \psi-E_2 \psi,g \rangle  = -\frac{1}{2 \pi \ii} \int_{\gamma_\mu} \langle (R_1(z)P_1-R_2(z)P_2) \psi,g \rangle \, \dd z
\end{equation}
for all $g \in L^2(\Omega)$. The claim \eqref{EF.conv} follows by \eqref{norm.weak}, \eqref{zeta.xi} and the following manipulations.
The term with $Q_1-Q_2$ is estimated in a straightforward way. In terms with $\nabla \xi$, we integrate by parts and get
\begin{equation}
\begin{aligned}
&\langle (\nabla \xi) R_1(z) P f, \nabla R_2(z)^* P g \rangle - \langle \nabla R_1(z) P f, (\nabla \xi) R_2(z)^* P g \rangle
\\
& \quad 
= 2 \langle (\nabla \xi) R_1(z) P f, \nabla R_2(z)^* P g \rangle + \langle (\Delta \xi) R_1(z) P f, R_2(z)^* P g \rangle.
\end{aligned}
\end{equation}
We further rewrite $P\psi = \psi - \widetilde P \psi$, use that $R_1(z) \psi \in \Ran(E_1)$ and obtain
\begin{equation}
\|\zeta R_1(z) \psi\| \leq \|R_1(z) \psi\| \max_{\substack{\phi \in \Ran (E_1) \\ \|\phi\|=1}} \| \zeta \phi\|.
\end{equation}
In the other terms, the integration leads to formulas with spectral projections. 	
\qedhere
\end{enumerate}
\end{proof}

From the proof we can deduce the following estimates on the appearing constants

\begin{equation}\label{CD.mu}
	\begin{aligned}
		C_{1,\mu} & = \|(E_2 \restriction \Ran(E_1))^{-1}\| \|E_2(Q_2+1)\| \|(Q_1+1)E_1\|, 
		\\
		C_{2,\mu} & = \|(E_2 \restriction \Ran(E_1))^{-1}\|  (\|E_2\| \| T_1 E_1\| + \|T_2 E_2\| 
		\\ & \qquad \qquad  \qquad  \qquad \qquad \qquad  + 2  \| \nabla \xi \|_{L^\infty} \|\nabla E_2^*\| + \|\Delta \xi\|_{L^\infty} \|E_2^*\| ), 
		\\
		D_{1,\mu} & = \frac{|\gamma_\mu|}{2\pi} \max_{z \in \gamma_\mu} \left( \|(Q_1+1)R_1(z)\| \|(Q_2+1) R_2(z)^* \| \right), 
		\\
		D_{2,\mu} & = \frac{|\gamma_\mu|}{\pi} \max_{z \in \gamma_\mu} ( 2\|\nabla \xi\|_{L^\infty} \|R_1(z)\| \|\nabla R_2(z)^* \| 
		+ \|\Delta \xi\|_{L^\infty} \|R_1(z)\| \|R_2(z)\|) 
		\\ & \quad
		+1 + \|E_1\|+\|E_2\|. 
	\end{aligned}
\end{equation}

\subsection{Sequence of operators}

In next sections, we use Theorems~\ref{thm:res} and \ref{thm:rates} for a sequence of operators $\{T_n\}$ converging to $T_\infty$, in the setting summarized as follows.

\begin{asm-sec}\label{asm:n}
Suppose that  
\begin{enumerate}[\upshape i), wide]
	\item \label{asm:n.dom} domains $\{\Omega_n\}_{n \in \N^*} \subset \Rd$ are open (non-empty)
	and $\Omega_n \subset \Omega_\infty$, $n \in \N$;
	\item \label{asm:n.Qn} potentials $Q_n \in W^{1,\infty}_{\rm loc}(\ov{\Omega_n})$ with $\Re Q_n \geq 0$, $n\in \N^*$, satisfy \eqref{Q.asm.sep} uniformly, 
	\begin{equation}\label{Qn.sep}
	 \exists\eps_\nabla \in [0,\ec), \  \exists M_{\nabla} \geq 0, \ \forall n \in \N^*, \ 
		|\nabla Q_n| \leq \eps_\nabla |Q_n|^\frac 32 + M_{\nabla} \quad \text{a.e.~in~} \Omega_n;
	\end{equation}

	\item \label{asm:n.xin} operators $T_n = -\Delta + Q_n$ in $L^2(\Omega_n)$ (introduced via forms $t_n$, $n \in \N^*$, as in Section~\ref{ssec:Schr}) and  
	cut-offs $\{\xi_n\}_{n \in \N}$ are such that 
	\begin{equation}\label{xin.nabla.unif}
		\sup_{n \in \N} \left (\||\nabla \xi_n| \|_{L^\infty} + \|\Delta \xi_n \|_{L^\infty} \right) < \infty
	\end{equation}
	and that the conditions of Assumption~\ref{asm:pert} are satisfied for $\Omega_1$, $\Omega_2$, $\xi$, $T_1$, $T_2$, $t_1$, $t_2$ replaced by $\Omega_n$, $\Omega_\infty$, $\xi_n$, $T_n$, $T_\infty$, $t_n$, $t_\infty$, respectively, for every $n \in \N$;
	\item \label{asm:n.Q.conv} potentials $\{Q_n\}$ converge in the following sense
	\begin{equation}\label{tau.n.def}
	\begin{aligned}
	\tau_n&:=\left\| \frac{\xi_n (Q_n-Q_\infty)}{(Q_n+1)(Q_\infty+1)} \right\|_{L^\infty(\Omega_n)} \! \! \! \! + \left\| \frac{\zeta_n}{Q_n+1} \right\|_{L^\infty(\Omega_n)} \! \! \! \! +\left\| \frac{\zeta_n}{Q_\infty+1} \right\|_{L^\infty(\Omega_\infty)}
	\\ & = o(1), \quad n \to \infty,
	\end{aligned}
	\end{equation}
	where 
$\widetilde \xi_n := 1 -\xi_n$, $\zeta_n := \chi_{\supp \widetilde \xi_n}$, $n \in \N$.	
\hfill $\blacksquare$	
\end{enumerate}
\end{asm-sec}

A typical situation we analyze is with $\Omega_\infty$ being $\R^d$, an exterior domain in $\R^d$ or a cone in $\Rd$ and $\Omega_n$ are expanding subsets eventually exhausting $\Omega_\infty$, \eg~expanding balls. Moreover, note that in the setting of Assumption~\ref{asm:n}, the domain $\Omega_0$ in Assumption~\ref{asm:pert} corresponds to $\Omega_\infty$; the projections $P_1$, $P_2$ in \eqref{Pj.def} correspond to 
\begin{equation}\label{Pn.def}
P_n:= \chi_{\Omega_n} \cdot, \quad P_\infty = I_{L^2(\Omega_\infty)}, 
\end{equation}
respectively and the projection $P$ in \eqref{Pj.def} corresponds to $P_n$. 

To formulate the result we introduce notation $\{\nu_k\}$ for the isolated eigenvalues of $T_\infty$ with finite algebraic multiplicities $\{m_a(\nu_k)\}$, 
\begin{equation}\label{nu.inf.def}
	\spd(T_\infty) =  \{\nu_k\}, \quad m_a(\nu_k) < \infty,
\end{equation}
and for spectral projections of $T_\infty$ and $\{T_n\}$
\begin{equation}\label{Ekn.def}
E_k:= \frac1{2\pi \ii} \int_{ \gamma_k} (z-T_\infty)^{-1} \, \dd z, \qquad  E_{k,n}:=\frac1{2\pi \ii} \int_{ \gamma_k} (z-T_n)^{-1} P_n \, \dd z,
\end{equation}
where $\{\gamma_k\}$ are suitable contours around $\{\nu_k\}$. Moreover, we use notation
\begin{equation}\label{kap.n.def}
\kappa_n:=\max_{\substack{\phi \in \Ran E_k \\ \|\phi\|=1}} 
\left(
\left\| \frac{\xi_n (Q_n-Q_\infty)}{(Q_n+1)(Q_\infty+1)} \phi \right\| + \| \zeta_n \phi\| 
\right).
\end{equation}

The statements of Corollary~\ref{cor:n} follow in a straightforward way from Theorems~\ref{thm:res} and \ref{thm:rates}. The details on the claim on no pollution can be found in \cite[Chap.~IV.6]{Kato-1966} or \cite[Rem.~4.2]{Boegli-2017-42}. Notice that the last term in \eqref{EF.conv} disappears since $\widetilde P$ corresponds to $I-P_n$. Finally,  if all $T_n$, $n \in \N^*$, have compact resolvents, Corollary~\ref{cor:n} yields spectral exactness.

\begin{corollary}\label{cor:n}
Let Assumption~\ref{asm:n} be satisfied and let $P_n$, $\nu_k$, $E_k$, $E_{k,n}$ and $\kappa_n$ be as in \eqref{Pn.def}, \eqref{nu.inf.def}, \eqref{Ekn.def} and \eqref{kap.n.def}. Then the following hold as $n \to \infty$.
\begin{enumerate}[\upshape i), wide]
\item $\{T_n\}$ converge to $T_\infty$ in the norm resolvent sense (hence there is no spectral pollution): for every $z \in \rho(T_\infty)$, there is $n_z>0$ such that $z \in \rho(T_n)$, $n>n_z$, and
\begin{equation}\label{Tn.gnr}
\|(T_n-z)^{-1}P_n - (T_\infty-z)^{-1} \|_{\cB(L^2(\Omega_\infty))} = \BigO_z(\tau_n);
\end{equation}
\item spectral projections converge in norm:
\begin{equation}\label{En.conv.cor}
\|E_{k,n} - E_k\|_{\cB(L^2(\Omega_\infty))} = \BigO_k(\tau_n);
\end{equation}
\item there is spectral inclusion for isolated eigenvalues with finite algebraic multiplicities: for every $\nu_k \in \spd(T_\infty)$, as $n\to \infty$, there are exactly $m_a(\nu_k)$ eigenvalues $\mu_{k,n}^{(j)}$, $j=1, \dots, m_a(\nu_k)$, of $T_n$ in a neighborhood of $\nu_k$ (repeated according to their algebraic multiplicities)  and 
\begin{equation}\label{la.conv.n}
|\nu_k-\hat \mu_{k,n}| = \BigO_k \left(\kappa_n\right), \quad 	\hat \mu_{k,n} := \frac1{m_a(\nu_k)} \sum_{j=1}^{m_a(\nu_k)} \mu_{k,n}^{(j)};
\end{equation}
\item (generalized) eigenfunctions converge in norm: for very $\psi \in \Ran(E_k)$
\begin{equation}\label{EF.conv.n}
	\|\psi - E_{k,n} \psi\| = \BigO_k \left(\kappa_n\right), \quad n \to \infty.
\end{equation}
\end{enumerate}
\end{corollary}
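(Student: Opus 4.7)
The plan is to obtain Corollary~\ref{cor:n} by applying Theorems~\ref{thm:res} and \ref{thm:rates} to the pair $(T_n, T_\infty)$ for each $n$; the substantive additional work is to check that the constants appearing in those two theorems can be chosen uniformly in $n$. This uniformity is supplied by Assumption~\ref{asm:n}: the separation bound \eqref{Qn.sep} is uniform in $n$, so the graph-norm constants $C_{T_n}$ in \eqref{T.gn.eq} admit a positive lower bound independent of $n$, while \eqref{xin.nabla.unif} provides uniform bounds on $\|\nabla \xi_n\|_{L^\infty}$ and $\|\Delta \xi_n\|_{L^\infty}$. Under this uniformity, the constant $K$ in Theorem~\ref{thm:res} becomes $n$-independent.

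For part (i), I would apply Theorem~\ref{thm:res} at $z=-1$ to get $\|R_n(-1)P_n - R_\infty(-1)\|_{\cB} \leq K\tau_n$ with $K$ independent of $n$. To extend to arbitrary $z \in \rho(T_\infty)$, I would use identity \eqref{TT0.res.est.z} from Remark~\ref{rem:sect}: since $I - (z+1)R_\infty(-1)$ is invertible for $z \in \rho(T_\infty)$, a Neumann-series perturbation argument based on the $z=-1$ convergence forces $z \in \rho(T_n)$ for $n$ larger than some $n_z$, with $\|(I - (z+1) R_n(z) P_n)^{-1}\|$ bounded uniformly in such $n$; the bound \eqref{TT0.res.est.z} then propagates the $\tau_n$ rate. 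Norm resolvent convergence excludes spectral pollution by standard stability of the spectrum (see \eg~\cite[Chap.~IV]{Kato-1966}).

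For (ii), fix $\nu_k \in \spd(T_\infty)$ and a contour $\gamma_k \subset \rho(T_\infty)$ enclosing only $\nu_k$. By (i), for large $n$ we have $\gamma_k \subset \rho(T_n)$ and the resolvent difference is $\BigO_k(\tau_n)$ uniformly on the compact set $\gamma_k$, so integrating along $\gamma_k$ in \eqref{Ekn.def} yields \eqref{En.conv.cor}. Once $\|E_{k,n} - E_k\| < 1$, a standard argument forces $\dim \Ran E_{k,n} = \dim \Ran E_k = m_a(\nu_k)$, so exactly $m_a(\nu_k)$ eigenvalues of $T_n$ (with algebraic multiplicities) lie inside $\gamma_k$; this puts us in the hypotheses of Theorem~\ref{thm:rates} with $T_1 = T_\infty$, $T_2 = T_n$, $\mu = \nu_k$, and \eqref{la.conv.est}--\eqref{EF.conv} then deliver \eqref{la.conv.n} and \eqref{EF.conv.n}. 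The last term in \eqref{EF.conv} vanishes because $\Ran(E_{k,n}) \subset \Ran(P_n)$, so $\widetilde P = I - P_n$ acts as $0$ on $\Ran(E_{k,n})$, exactly as noted just before the statement.

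The hard part will be showing that the constants $C_{j,\mu}$ and $D_{j,\mu}$ in \eqref{CD.mu} remain bounded uniformly in $n$, so that the $\BigO_k$-rates come out with constants depending only on $k$. The factors $\|E_{k,n}\|$, $\|\nabla E_{k,n}^*\|$, $\|T_n E_{k,n}\|$ and the resolvent norms $\|R_n(z)\|$, $\|\nabla R_n(z)^*\|$ on $\gamma_k$ are uniformly controlled by the norm resolvent convergence from (i), whereas the multiplicative term $\|(Q_n+1) E_{k,n}\|$ is more delicate; I would control it by applying \eqref{T.gn.eq} to each $\phi \in \Ran E_{k,n}$, which converts the already-secured uniform bounds on $\|T_n E_{k,n}\|$ and $\|E_{k,n}\|$ into a uniform bound on $\|(Q_n+1) E_{k,n}\|$ via the $n$-independent separation constant from Assumption~\ref{asm:n}\ref{asm:n.Qn}. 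Once these uniform bounds are in place, \eqref{Tn.gnr}--\eqref{EF.conv.n} follow.
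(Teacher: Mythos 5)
Your proposal is correct and matches the paper's own (largely implicit) argument: the paper likewise derives the corollary directly from Theorems~\ref{thm:res} and \ref{thm:rates}, extends to general $z$ via \eqref{TT0.res.est.z} and a Neumann-series argument (cf.\ Remark~\ref{rem:res.z}), invokes the standard no-pollution and projection-rank arguments, and notes that the last term of \eqref{EF.conv} vanishes because $\widetilde P$ corresponds to $I-P_n$. Your additional care about the $n$-uniformity of the constants, supplied by \eqref{Qn.sep} and \eqref{xin.nabla.unif}, is exactly the point the paper leaves implicit.
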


\begin{remark}\label{rem:res.z}
Based on \eqref{TT0.res.est.z} and an additional Neumann series argument, one can give an estimate on $n_z$, the constant in \eqref{Tn.gnr} and hence also in the other estimates. Namely, using notation $\varrho_n(z):= \|(T_n-z)^{-1}P_n - (T_\infty-z)^{-1} \|_{\cB(L^2(\Omega_\infty))}$, we obtain
\begin{equation}\label{res.z.n}
\varrho_n(z) \leq \frac{\|(T_\infty-z)(T_\infty+1)^{-1}\|\|(T_\infty+1)(T_\infty-z)^{-1}\|}{1-	|z+1| \|(T_\infty+1)(T_\infty-z)^{-1}\| \varrho_n(-1)} \varrho_n(-1).
\end{equation}
if the denominator is positive.
%
\hfill $\blacksquare$
\end{remark}

\section{Domain truncations}
\label{sec:dom.tr}

We apply first Corollary~\ref{cor:n} to domain truncations of a given Schr\"odinger operator with the underlying initial domain $\Rd$ to bounded expanding domains $\{\Omega_n\}$. It is easy to verify that the results can be reformulated for other initial domains like exterior domains in $\Rd$, cones in $\Rd$ etc. 

\begin{asm-sec}\label{asm:dom.tr}
Let $\Omega_\infty:=\Rd$, let $\{\Omega_n\} \subset \Rd$ be bounded and open sets and let there exist a sequence $\{r_n\} \subset (0, \infty)$ such that $r_n \nearrow + \infty$ and 
\begin{equation}\label{Omn.rn}
	B_{r_n+2}(0) \subset \Omega_n, \qquad n \in \N.
\end{equation} 
Let $Q_\infty$ satisfy Assumption~\ref{asm:Q} (with $Q:=Q_\infty$) and suppose in addition that
\begin{equation}\label{Q.dom.unbd}
\lim_{R \to \infty} \essinf_{|x|>R, x \in \Omega}|Q_\infty(x)| = +\infty.
\end{equation}
\hfill $\blacksquare$
\end{asm-sec}
Note that \eqref{Q.dom.unbd} implies that all operators in Theorem~\ref{thm:T.basic} have compact resolvent, thus we obtain spectral exactness. Under a stronger version of Assumption~\ref{asm:Q}, the spectral exactness and estimate on the convergence rate of eigenvalues and eigenfunctions were proved in \cite{Boegli-2017-42}; the norm resolvent convergence was based on a collective compactness which did not provide the convergence rate. 
%
\begin{theorem}\label{thm:dom.tr}
Let Assumption~\ref{asm:dom.tr} be satisfied and let $T_n$ be the Dirichlet realizations of $-\Delta +Q$ in $L^2(\Omega_n)$, $n \in \N^*$, respectively. Then the statements of Corollary~\ref{cor:n} hold with $\{\nu_k\} := \sigma(T_\infty)$ and
\begin{equation}\label{tau.kap.dom.tr}
\tau_n = \left\| \widetilde \chi_{B_{r_n}(0)} |Q|^{-1} \right \|_{L^\infty}, 
\quad \kappa_n =  \max_{\substack{\phi \in \Ran E_k \\ \|\phi\|=1}} \| \widetilde \chi_{B_{r_n}(0)} \phi\|_{L^2}, \quad n \in \N.
\end{equation}

Suppose in addition that  $(T_\infty+1)^{-1} \in \cS_p(L^2(\Rd))$ for some $p>0$. 
Then, for any fixed $q > p$ and for any $m \in \N$ with $m> p$
\begin{equation}\label{Tn.Sp.trace}
\begin{aligned}
\|(T_n+1)^{-1}P_n - (T_\infty+1)^{-1} \|_{\cS_q} &= \BigO \left(\tau_n^{1-\frac pq} \right), 
\\
\left| \Tr \Big( (T_n+1)^{-m}P_n  - (T_\infty+1)^{-m} \Big) \right| &= \BigO \left(\tau_n^{m-p} \right), \quad n \to \infty.
\end{aligned}
\end{equation}
\end{theorem}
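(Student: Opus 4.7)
My approach is to verify the hypotheses of Assumption~\ref{asm:n} for this sequence, invoke Corollary~\ref{cor:n} for the first batch of claims, and refine via the Schatten estimate \eqref{T.Sp} together with a telescoping identity for the trace. First, I would construct cut-offs $\xi_n \in C^\infty_0(\R^d)$ with $\xi_n \equiv 1$ on $B_{r_n}(0)$, $\supp \xi_n \subset B_{r_n+1}(0)$, and $\|\nabla \xi_n\|_{L^\infty} + \|\Delta \xi_n\|_{L^\infty}$ uniformly bounded in $n$ (taking a fixed shape, since the width of the transition region is fixed at $1$). By \eqref{Omn.rn} we have $\supp \xi_n \subset \Omega_n \subset \Omega_\infty$, so extending $\xi_n f$ by zero from $\Omega_n$ to $\Omega_\infty$ (for $f \in \Dom(T_n)$) and restricting $\xi_n g$ to $\Omega_n$ (for $g \in \Dom(T_\infty)$) preserves both $W^{1,2}_0$ and $\Dom(|Q|^{1/2})$, which gives \eqref{xi.tt0}. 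Since $Q_n = Q_\infty|_{\Omega_n}$, the constants $\eps_\nabla$, $M_\nabla$ in \eqref{Q.asm.sep} are inherited uniformly, and Assumption~\ref{asm:n} is satisfied.

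Next I would read off $\tau_n$ and $\kappa_n$. Because $Q_n = Q_\infty$ on $\supp \xi_n$, the first term of \eqref{tau.n.def} vanishes; the remaining two terms collapse, using $\supp \zeta_n \subset \Omega_\infty \setminus B_{r_n}(0)$ together with \eqref{Q.dom.unbd}, to a quantity comparable to $\|\widetilde \chi_{B_{r_n}(0)}|Q|^{-1}\|_{L^\infty}$. For $\kappa_n$ the first term again vanishes and the second reduces to $\|\widetilde \chi_{B_{r_n}(0)}\phi\|_{L^2}$ with $\phi \in \Ran E_k$, as required by \eqref{tau.kap.dom.tr}. Plugging these expressions into Corollary~\ref{cor:n} yields all the conclusions of its statement.

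For the Schatten refinement, the added hypothesis $(T_\infty+1)^{-1} \in \cS_p$ combined with Theorem~\ref{thm:T.basic}(iv) and the fact that the integral in \eqref{Sp.crit} for $\Omega_n$ is dominated by that for $\Omega_\infty$ (same integrand, smaller domain) gives a uniform bound on $\|(T_n+1)^{-1}\|_{\cS_p}$. Inserting this into \eqref{T.Sp} produces the first estimate of \eqref{Tn.Sp.trace}. For the trace estimate I would set $A_n := (T_n+1)^{-1}P_n$ and $A_\infty := (T_\infty+1)^{-1}$ and use the telescoping identity $A_n^m - A_\infty^m = \sum_{j=0}^{m-1} A_n^{m-1-j}(A_n - A_\infty) A_\infty^j$. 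Cyclicity of the trace rearranges the sum into $\Tr((A_n - A_\infty) S_{n,m})$ with $S_{n,m} := \sum_{j=0}^{m-1} A_\infty^j A_n^{m-1-j}$, and $\|S_{n,m}\|_{\cS_b}$ is uniformly bounded for $b \geq (p+\eps)/(m-1)$ by H\"older applied to the $m-1$ factors in $\cS_{p+\eps}$. A further H\"older with $1/a + 1/b = 1$ combined with $\|A_n - A_\infty\|_{\cS_a} \leq K_a \tau_n^{1-p/a}$ yields an exponent on $\tau_n$ that tends to $m-p$ as $\eps \to 0^+$ (via $a = (p+\eps)/(p+\eps-m+1)$).

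The main obstacle is extracting the exponent $m-p$ for large $m$: the above H\"older optimization is valid only when $p + \eps > m - 1$, and for $m > p + 1$ one must iterate the telescope to produce higher-order terms involving $(A_n - A_\infty)^k$ with $k$ close to $m$, then invoke $\|(A_n-A_\infty)^k\|_{\cS_1} \leq \|A_n-A_\infty\|_{\cS_k}^k \leq K^k \tau_n^{k-p}$ (valid for $k > p$) together with uniform $\cS_{p+\eps}$ bounds on the remaining $A_n, A_\infty$ factors. Tracking the constants through this iteration, and ensuring uniformity of the $\cS_p$ bound on $(T_n+1)^{-1}$ (which may require additional regularity of $\partial\Omega_n$ implicit in the construction), is the delicate part of the argument.
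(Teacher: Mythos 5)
Your handling of the first half of the theorem (construction of the cut-offs $\xi_n$, verification of Assumption~\ref{asm:n}, the identification of $\tau_n$ and $\kappa_n$ via $\xi_n(Q_n-Q_\infty)=0$ and $\supp\zeta_n\subset\R^d\setminus B_{r_n}(0)$, and the application of Corollary~\ref{cor:n} and of \eqref{T.Sp}) coincides with the paper's proof. The genuine gap is in your derivation of the uniform bound $\sup_n\|(T_n+1)^{-1}\|_{\cS_p}<\infty$. You propose to obtain it from the phase-space criterion \eqref{Sp.crit} by monotonicity of the integral in the domain, but \eqref{Sp.crit} is only a \emph{sufficient} condition: the hypothesis $(T_\infty+1)^{-1}\in\cS_p$ does not imply that the integral over $\R^d\times\R^d$ is finite, so there is nothing to dominate. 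Moreover \eqref{Sp.crit} presupposes $\partial\Omega\in C^{2,\alpha}$, which Assumption~\ref{asm:dom.tr} does not grant for the $\Omega_n$, and the implication ``\eqref{Sp.crit} $\Rightarrow (S+1)^{-1}\in\cS_p$'' as stated carries no constant, so uniformity in $n$ would not follow even where it applies. The paper's route avoids all of this: by \eqref{Sp.equiv.1}--\eqref{Sp.equiv.2} (with constants depending only on $\eps_\nabla,M_\nabla$, hence uniform in $n$) one passes to the self-adjoint comparison operators $S_n=-\Delta+|Q|$, and the min--max principle together with Dirichlet domain monotonicity gives $\|(S_n+1)^{-1}\|_{\cS_p}\le\|(S_\infty+1)^{-1}\|_{\cS_p}$ directly from the hypothesis. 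You should replace your argument by this one.

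Concerning the trace estimate, the paper uses precisely your single telescope identity and a single H\"older step with exponents fixed by $(m-1)/p+1/q=1$, which yields $1-p/q=m-p$; as you correctly note, this requires $m-1<p$ (so that $q>0$) in addition to $m>p$ (so that $q>p$ and \eqref{T.Sp} applies). Your observation that the range $m\ge p+1$ is not covered by this computation is therefore a fair point about the argument itself rather than a defect of your proposal. However, your proposed repair via iterating the telescope does not close: after expanding, the words containing a single factor of $A_n-A_\infty$ can only be bounded by $c_p^{m-1}\|A_n-A_\infty\|_{\cS_q}\lesssim\tau_n^{1-p/q}$, whose exponent never exceeds $1$, which for $m-p>1$ is weaker than the claimed $\tau_n^{m-p}$; the high-order terms $(A_n-A_\infty)^k$ with $k>m-p$ that you isolate are not the dominant ones. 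For $p<m<p+1$ your computation reproduces the paper's bound; beyond that range, Schatten--H\"older bookkeeping alone does not suffice and one would have to exploit cancellations in the trace itself.
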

\begin{proof}
The conditions \ref{asm:n.dom} and \ref{asm:n.Qn} in Assumption~\ref{asm:n} are clearly satisfied, in particular since $Q_n = Q_\infty \restriction \Omega_n$. 
The cut-offs $\xi_n$ can be constructed by a suitable mollification of $\chi_{B_{r_n}(0)}$ so that $\supp(\xi_n) \subset B_{r_n+1}(0)$ and the condition \eqref{xin.nabla.unif} hold. Notice that then we have also $\chi_{\Omega_n}\xi_n=\xi_n$ in $\Rd$. Moreover, since $\Omega_n$ are bounded, we have $\Dom(t_n) = W^{1,2}_0(\Omega_n)$, $n \in \N$, thus every $f \in \Dom(T_n) \subset W^{1,2}_0(\Omega_n)$ satisfies $\xi_n f \in \Dom(t_\infty) =  W^{1,2}(\Rd)\cap \Dom(|Q_\infty|^\frac12)$. On the other hand, every $g \in \Dom(T_\infty) = W^{2,2}(\R^d) \cap \Dom(Q_\infty)$ satisfies $\xi_n g \in \Dom(t_n)$ using the properties of $\xi_n$. Thus the conditions of Assumption~\ref{asm:n}.\ref{asm:n.xin} are satisfied as well. The functions $\{\zeta_n\}$ from \eqref{tau.n.def} satisfy $\zeta_n \leq \chi_{B_{r_n}(0)}$, $n \in \N$. Obviously $\xi_n(Q_n-Q_\infty) = 0$ in our case, thus we get from \eqref{Q.dom.unbd} that the condition Assumption~\ref{asm:n}.\ref{asm:n.Q.conv} is satisfied with $\tau_n$ and $\kappa_n$ as in \eqref{tau.kap.dom.tr}.
In summary the assumptions of Corollary~\ref{cor:n} are satisfied.

By the equivalence \eqref{Sp.equiv.2} and a min-max argument, see \eg~\cite[Prop.~4 in Sec.~XIII.15]{Reed4}, applied to the self-adjoint Dirichlet realizations $S_n:=-\Delta + |Q|$ in $L^2(\Omega_n)$ and $S_\infty:= -\Delta + |Q|$ in $L^2(\Rd)$, we obtain that $\|(T_n+1)^{-1}\|_{\cS_p} \ls \|(T_\infty+1)^{-1}\|_{\cS_p}$ and so the convergence in $\cS_q$ follows from \eqref{T.Sp} and the already established norm resolvent convergence.

Next we use H\"older inequality for Schatten norms and the telescope sum formula with $r \in \N$ (valid for any two bounded operators $A$, $B$, see \eg~\cite{BelHadjAli-2011})
\begin{equation}
	A^r - B^r = \sum_{j=0}^{r-1} A^{r-1-j} (A-B)B^j.
\end{equation}
Denoting $c_p:=\sup_{n \in \N_*} \|(T_n+1)^{-1}\|_{\cS_p}$, we arrive at
\begin{equation}
\| (T_n+1)^{-r}P_n  - (T_\infty+1)^{-r} \|_{\cS_1} \leq r c_p^{r-1} \|(T_n+1)^{-1}P_n  - (T_\infty+1)^{-1} \|_{\cS_q}
\end{equation}
where $(r-1)/p+1/q =1$. Finally, the second claim in \eqref{Tn.Sp.trace} follows by $|\Tr A| =\|A\|_{\cS_1}$ for $A \in \cS_1$, see \eg~\cite[Chap.~3]{Simon-2005}.
\end{proof}
	
\begin{example}\label{ex:dom.tr.Q.pol}
For $Q$ from Example~\ref{ex:Schr.basic}, consider $T_\infty = -\Delta + Q$ in $L^2(\R^d)$ and its truncations $T_n$, $n \in \N$, for which \eqref{Omn.rn} holds with $r_n=n$. Then Theorem~\ref{thm:dom.tr} and Example~\ref{ex:EF.dec} yield that (with some $c_k>0$)
\begin{equation}
	\tau_n = \BigO(n^{-\gamma}), \quad \kappa_n = \BigO(\exp(-c_k n^{\frac \gamma2 +1})). 
\end{equation}
Moreover, for any $r \in \N$ such that $r > p$ for $p>p_{\gamma,d}$, see \eqref{Qg.Sp},
\begin{equation}\label{Q.g.trace}
\left| \sum_{j=1}^\infty \frac{1}{(\la_{j,n}+1)^r}   - \sum_{j=1}^\infty \frac{1}{(\la_{j,\infty}+1)^r} \right| = \BigO \left(n^{-\gamma(r-p)} \right), \quad n \to \infty,
\end{equation}
where $\sigma(T_n) =\{\la_{j,n}\}_j$, $n \in \N_*$. 
For real $Q$, Hoffman-Wielandt inequality yields
\begin{equation}
	\sum_{j =1}^\infty \left| \frac1{\widetilde \la_{j,n}+1} - \frac1{\la_{j,\infty}+1}\right|^q 
	= \BigO \left(n^{-\gamma(q- p)} \right), \quad n \to \infty,
\end{equation}
for any $q > p> p_{\gamma,d}$, and suitable enumerations $\{\widetilde \la_{k,n}\}_k$ of $\{\sigma(T_n)\}$, see \cite{Kato-1987-111}.
\hfill $\blacksquare$
\end{example}

Our results are applicable for truncations of operators $T$ without compact resolvent to suitable unbounded domains. Roughly speaking, one can truncate the parts of domain where the potential $Q$ is unbounded as $x \to \infty$. We illustrate this on simple self-adjoint one dimensional example; generalizations to complex potentials and more dimensions are straightforward.

\begin{example}\label{ex:tr.ex}
Consider
\begin{equation}
T_\infty = - \frac{\dd^2 }{\dd x^2} + x \e^{ x},
\quad 
\Dom(T_\infty) :=W^{2,2}(\R) \cap \Dom(x \e^{x}),
\end{equation}
and (Dirichlet) truncations $T_n:= \Dti + x \e^x$ to $\Omega_n=(-\infty,n+2)$, $n \in \N$.
Notice that standard arguments show that the essential spectrum of $T_n$ is $[0,\infty)$ for all $n \in \N^*$ and that $T_\infty$ has at least one negative eigenvalue. 

Regarding Assumption~\ref{asm:n}, we construct cut-offs $\xi_n$ as mollifications of $\chi_{(-\infty,n]}$ such that $\xi_n \restriction  (-\infty,n] = 1$, $\supp \xi_n \subset (-\infty,n+1)$ and 
$\sup_{n} ( \|\nabla \xi_n\|_{L^\infty} + \|\Delta \xi_n\|_{L^\infty}) < \infty$.
The conditions in Assumption~\ref{asm:n} are then easy to verify. Hence  Corollary~\ref{cor:n} yields the norm resolvent convergence of $T_n$ to $T_\infty$ with $\tau_n = n^{-1} \e^{-n}$. Thereby we obtain also no spectral pollution and spectral inclusion of discrete eigenvalues of $T_\infty$ with the eigenvalue convergence rate determined by $\kappa_n = \exp(-c_\la n \e^{\frac n2})$ with some $c_\la>0$, see Example \ref{ex:EF.dec.ex}.
\hfill $\blacksquare$
\end{example}

At the first sight, the spectral exactness of the domain truncations established in Theorem~\ref{thm:dom.tr}, does not seem to be troublesome for complex potentials. Nevertheless, it is crucial to observe that while the obtained rates depend on the decay of $|Q|^{-1}$ and of (generalized) eigenfunctions at infinity, the estimate of $z$-dependent constants, indicated in the $\BigO$-notation in Theorem~\ref{thm:dom.tr}, is very different for real and complex potentials. The occurring quantities are related to the norm of resolvent of $T$ and $T_n$ in $z$ as well as to the norms of the spectral projections corresponding to a given eigenvalue $\la$, see Theorem~\ref{rem:sect}, Remarks~\ref{rem:sect}, \ref{rem:res.z}, Theorem \ref{thm:rates} and \eqref{CD.mu}. The behavior of these can be very far from the self-adjoint case already for the simplest one dimensional cases with $Q(x)=\ii x^n$, $n \in \N$, where we have exponential growth when $z \to \infty$ along relevant rays in $\C$, see \eg~\cite{Davies-1999-200,Dencker-2004-57,Henry-2014-4,Henry-2014-15,Krejcirik-2015-56}.

The second observation and the main motivation of this paper is the presence of diverging eigenvalues of $T_n$ as $n \to \infty$, see Figure~\ref{fig:ix2.intro} and Section~\ref{sec:ex.div} for examples.

\section{Diverging eigenvalues in domain truncations}
\label{sec:div.ev}

In this section we analyze diverging eigenvalues in truncations $T_n$ of $T = - \Delta + Q$ in $L^2(\Omega)$ to a certain type of open domains $\{\Omega_n\} \subset \R^d$ with corners. More precisely, we assume that after suitable shifts and rescalings the domains $\{\Omega_n\}$ lie in a cone $\Gamma$ in $\R^d$ symmetric around positive $x_1$-axis and they exhaust $\Gamma$ eventually. If domains $\{\Omega_n\}$ and $Q$ satisfy Assumption~\ref{asm:dom.tr} in addition, then the approximation is spectrally exact, however, we do not make such assumptions here.

To identify the diverging eigenvalues, a combination of suitable unitary transforms (translation and scaling) is performed, following the ideas in \cite[Thm.~3.1]{Beauchard-2015-21}.
This procedure explained in the model case in Example~\ref{ex:Airy} reveals a suitable limiting operator and hence asymptotic formulas for diverging eigenvalues.
%
%

\begin{example}[Imaginary Airy operator]
	\label{ex:Airy}
Consider $\Omega_n := (-s_n,s_n)$ with some $\{s_n\} \subset \R_+$ with $s_n \nearrow + \infty$ and
	\begin{equation}\label{ix.Tn.def}
	T_n=\Dti+\ii x,\quad \Dom(S_n) = W^{2,2}(\Omega_n) \cap W_0^{1,2}(\Omega_n). 
	\end{equation}
	The translation $x \mapsto x-s_n$ leads to unitarily equivalent operators	%
	\begin{equation}\label{transn}
		\begin{aligned}
			\Dti+\ii x-\ii s_n=:S_n-\ii s_n, \quad \Dom(S_n) = W^{2,2}(\Sigma_n) \cap W_0^{1,2}(\Sigma_n),
		\end{aligned}
	\end{equation}
	where $\Sigma_n = (0,2 s_n)$. 
	Theorem~\ref{thm:dom.tr} implies that $S_n$ converges to $\Sa = \Dti+\ii x$ in $L^2(\R_+)$ in the norm resolvent convergence sense, hence the approximation is spectrally exact and so the spectra of $S_n$ contain asymptotically the eigenvalues $\{\nu_k+\rho_{k,n}\}_k$ where $\sigma(\Sa) = \{\nu_k\} \neq \emptyset$, see Example~\ref{ex:Airy.cone} for more details, and with some $c_k>0$ we have $\rho_{k,n} =\BigO_k( \exp(-c_k s_n^{3/2}))$ as $n\to\infty$.
	Thus, by spectral mapping and \eqref{transn}, we obtain that spectra of $T_n$ contain asymptotically the eigenvalues
	\begin{equation}
		\la_{k,n}=(\nu_k+\rho_{k,n})-\ii s_n, \quad n \to \infty.
	\end{equation}

	A second set of diverging eigenvalues of $T_n$ can be obtained by two transformations $x \mapsto x+s_n$ and $x \mapsto -x$. Isospectral partner of \eqref{ix.Tn.def} in $L^2(\Sigma_n)$ then takes the form $\Dti-\ii x+\ii s_n$, which is the adjoint operator of \eqref{transn}. Hence its spectrum contains asymptotically the conjugate eigenvalues $\{\overline{\la_{k,n}}\}$.
	
	In summary, the spectrum of $T_n$ contains asymptotically the complex conjugated eigenvalues $\{\la_{k,n}, \overline{\la_{k,n}}\}$, \cf~Figure~\ref{fig:ix}.
	\begin{figure}[!htb]
		\includegraphics[width=0.47\textwidth]{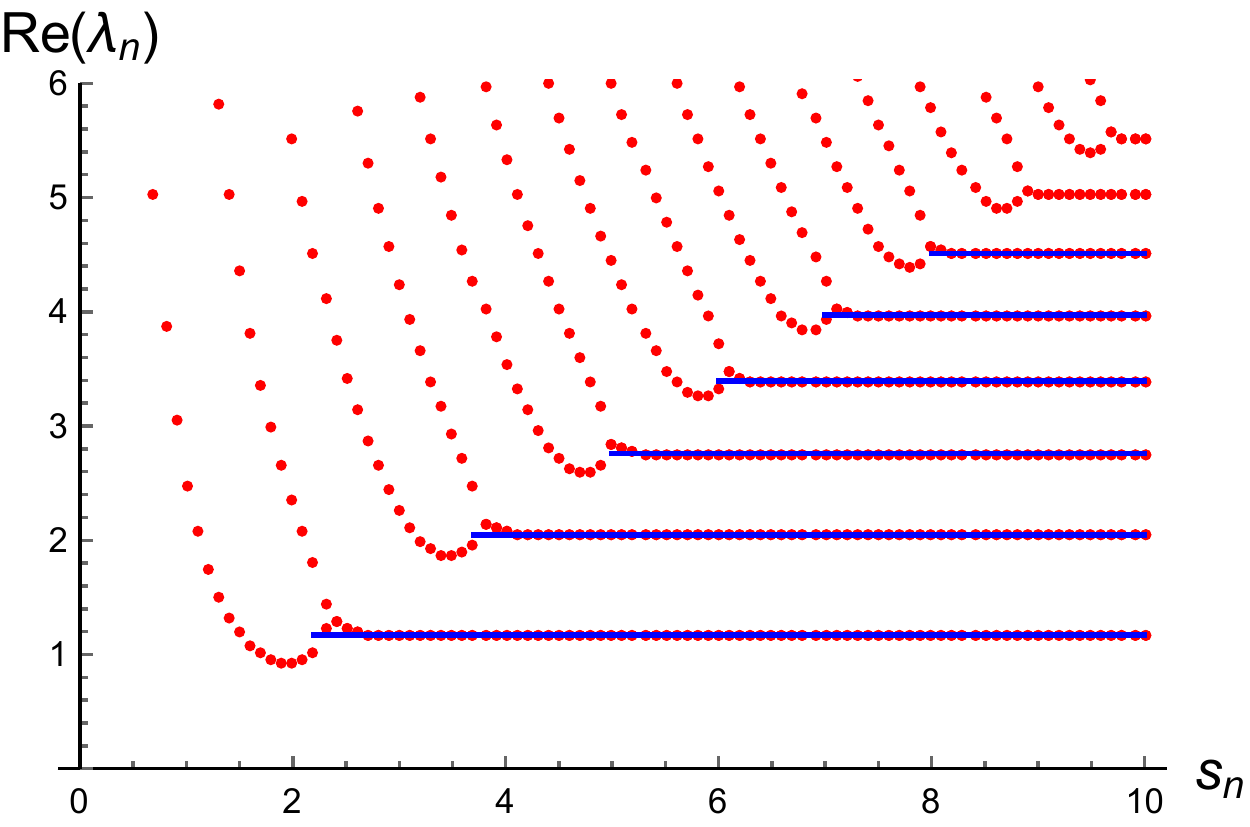}
		\hfill
		\includegraphics[width=0.47\textwidth]{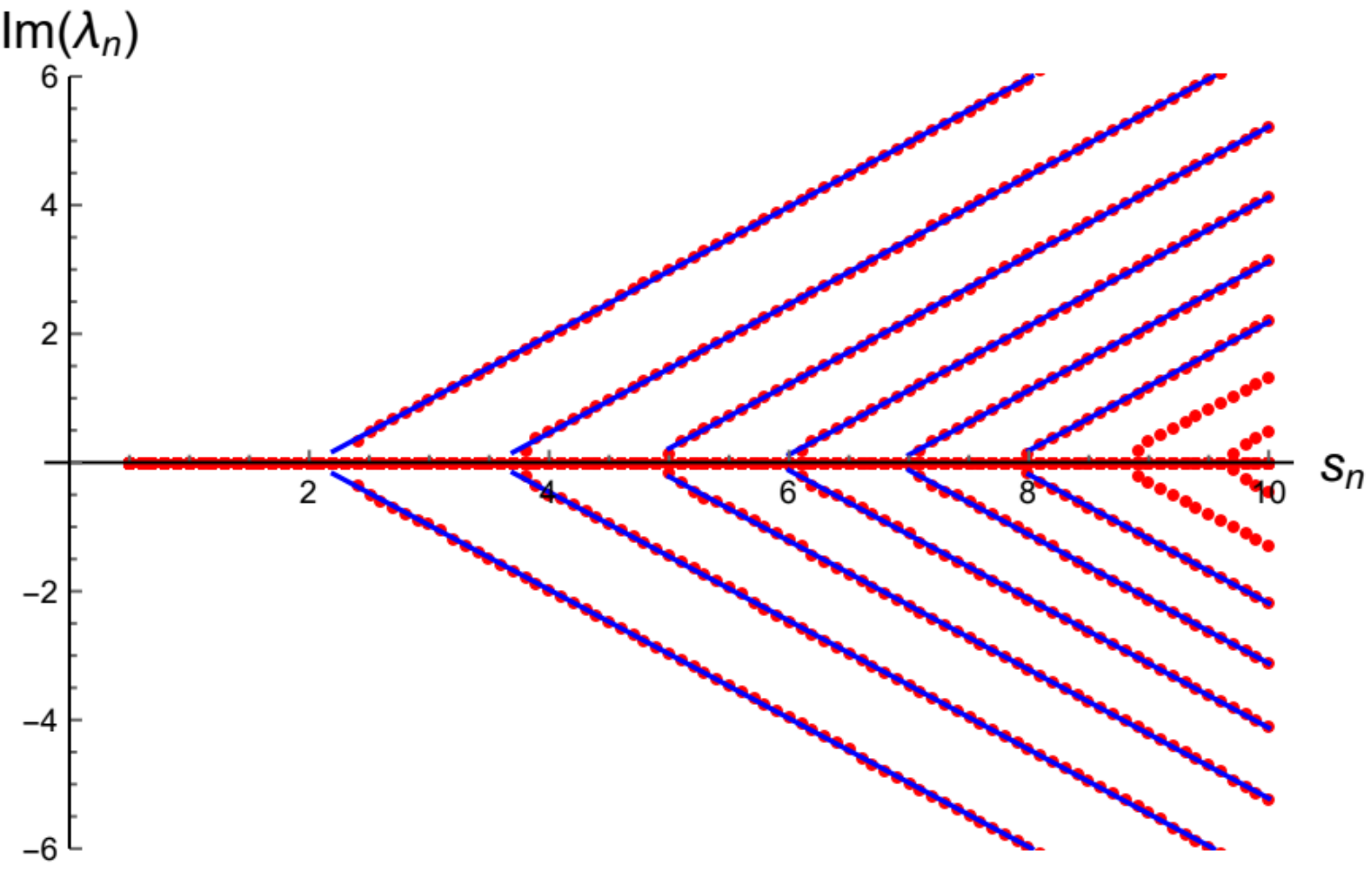}
		\caption{Real (left) and imaginary (right) parts of eigenvalues of domain truncations of imaginary Airy operator $\Dti + \ii x$ in $L^2(\R)$ to $L^2((-s_n,s_n))$, $s_n=0.1n$, $n=5,6,\dots,100$; subject to Dirichlet boundary conditions. Six asymptotic curves (blue) to which the eigenvalues converge; see Example~\ref{ex:Airy} and Section~\ref{subsec:ex.1D}.}
		\label{fig:ix}
	\end{figure}
\hfill $\blacksquare$
\end{example}

\subsection{General convergence result}
\label{subsec:div.gen}

As Example~\ref{ex:Airy} suggests, an essential role is played by complex Airy operators on cones, the eigenvalues of which enter the main asymptotic term of the diverging eigenvalues. 

For $\theta \in [0,\pi/2)$, we introduce notation for cones symmetric with respect to the positive $x_1$-axis,
\begin{equation}\label{Gamma.def}
	\begin{aligned}
		\Gamma_\theta(x_0) & := \{  (x_1, \widetilde x) \in \R^{1+(d-1)} \, : \, x_1 > x_0 \text{ and } |\widetilde x| < \tan (\theta) |x_1-x_0|\},
		\\
		\Gamma_\theta & := \Gamma_\theta(0), \qquad \Gamma_\theta(x_0) = (x_0,\infty) \text{ if } d=1.
	\end{aligned}
\end{equation}

\begin{example}[Complex Airy operator in a cone]\label{ex:Airy.cone}
	Let $\Gamma:=\Gamma_{\theta_\infty} \subset \R^d$ with some $\theta_\infty \in (0,\pi/2)$ if $d>1$. Consider the complex Airy operator in $L^2(\Gamma)$, \ie~
	\begin{equation}\label{Airy.def}
		\begin{aligned}
			\Sa:= -\Delta + \e^{\ii \omega} \, \vartheta \cdot x, 
			\quad
			\Dom(\Sa) := \Dom(\Delta_{\rm D}) \cap \Dom(|x|),
		\end{aligned}
	\end{equation}
	with $\Re e^{\ii \omega} \geq 0$ and $|\vartheta| =1$ such that 
	\begin{equation}\label{vt.cond}
		\forall \theta \in \ov \Gamma, \ |\theta| =1, \ \theta \cdot \vartheta >0.	
	\end{equation}
	Notice that \eqref{vt.cond} implies that there is a constant $c>0$ such that
	\begin{equation}\label{vtx.le}
		|\vartheta \cdot x| > c |x|, \quad x \in \Gamma.
	\end{equation}
	
	The property \eqref{vtx.le} guarantee that the potential in $\Sa$ satisfies \eqref{Q.unbd} in $\Gamma$, hence the resolvent of $\Sa$ is compact. Moreover, the eigenvalues of $\Sa$ can be related to the eigenvalues of the corresponding self-adjoint Airy operator in $L^2(\Gamma)$ by complex scaling, namely
	\begin{equation}\label{nuk.def}
		\sigma(\Sa) = \e^{\frac23 \ii \omega}\sigma (-\Delta_{\rm D} + \vartheta \cdot x) =: \{\nu_k\}_{k \in \N}.
	\end{equation}
	In particular for $d=1$, the eigenvalues of the (Dirichlet) Airy operator $\Dti + \e^{\ii \omega}x$ in $L^2(\R_+)$ are explicit in terms of the zeros $\{\mu_k\}$ of Airy function $\Ai$
	\begin{equation}
		\nu_k = \e^{\ii (\frac{2\omega}{3}-\pi)} \mu_k, \quad k \in \N; 
	\end{equation}
	 $\{\mu_k\}$ are ordered decreasingly and they diverge to $-\infty$, see~\eg~\cite{Almog-2008-40} for more details. 

	The eigenfunctions $\{\psi_k\}$ of $\Sa$ related to $\{\nu_k\}$ exhibit a super-exponential decay 
	\begin{equation}\label{ck.Airy}
		\exp( c_k |\cdot|^\frac 32 ) \psi_k \in L^2(\Gamma)
	\end{equation}
	with some $\{c_k\} \subset (0,+\infty)$, see Theorem~\ref{thm:EF.dec}, Example~\ref{ex:EF.dec} and \eqref{vtx.le}.
	\hfill $\blacksquare$
\end{example}

Under the following assumptions, we follow the steps in Example~\ref{ex:Airy} with an additional scaling to reveal a suitable limiting Airy operator in a cone. 

\begin{asm-sec}\label{asm:div}
Let $\Gamma = \Gamma_{\theta_\infty}$ be a cone as in \eqref{Gamma.def} with $0 < \theta_\infty < \pi/2$ if $d>1$. Let $\{\Omega_n\} \subset \R^d$ be open, let $\Omega:= \cup_{n \in \N} \Omega_n$ and let $Q \in C^1(\ov \Omega)$ satisfy Assumption~\ref{asm:Q} on every $\Omega_n$, $n\in \N$. 
Suppose further that
\begin{enumerate}[\upshape i),wide]
\item \label{asm:div.sn} there exists $\{s_n\} \subset \R_+$ with $s_n \nearrow +\infty$ such that for $p_n:=(s_n,0) \in \R^{1+(d-1)}$ 
\begin{equation}
|\nabla Q(-p_n)| \neq 0, \quad n \in \N; 
\end{equation}
\item \label{asm:div.Sign} shifted and scaled domains exhaust the cone $\Gamma$ eventually:

\noindent
there is $\{r_n\} \subset \R_+$ with $r_n \nearrow +\infty$ such that for 
\begin{equation}\label{t.Om.sigma.def}
\Sigma_n :=	\sigma_n^{-1} \left( p_n + \Omega_n \right), \quad \sigma_n := |\nabla Q(-p_n)|^{-\frac13}, \quad n \in \N,
\end{equation}
we have
\begin{equation}\label{Om.G.exhaust}
\Sigma_n \cap B_{r_n+2}(0) =  \Gamma \cap B_{r_n+2}(0), \quad n \in \N;
\end{equation}
\item \label{asm:div.Qn.sep} the potentials
\begin{equation}\label{Qn.div}
Q_n(x)  := \sigma_n^2 \left( Q(\sigma_nx-p_n)-Q(-p_n) \right), \quad x \in \Sigma_n,	
\end{equation}
satisfy Assumption~\ref{asm:n}.\ref{asm:n.Qn};
\item \label{asm:div.Qn.conv} linear approximation of $\{Q_n\}$ converge: 
	there is $\omega \in [-\pi/2,\pi/2]$ and $\vartheta$ is such that for all $\theta \in \ov \Gamma$ with $|\theta| =1$, we have $\theta \cdot \vartheta >0$ and such that
	\begin{equation}\label{omega.lim}
		\lim_{n \to \infty}	\frac{\nabla Q(-p_n)}{|\nabla Q(-p_n)|} = \e^{\ii \omega} \vartheta
	\end{equation}
	and
	\begin{equation}\label{iotan.def}
	 \iota_n := \left\|
	\frac{Q_n(x)-\e^{\ii \omega} \vartheta \cdot x}{(Q_n(x)+1)(\e^{\ii \omega} \vartheta \cdot x+1)}
	\right\|_{L^\infty(\Sigma_n)} = o(1), \quad n \to \infty.
	\end{equation}
	\hfill $\blacksquare$	
\end{enumerate}
\end{asm-sec}
To check the condition \eqref{Qn.sep}, it is convenient to use $\sigma_n x=:y  \in p_n+\Omega_n$ as
\begin{equation}\label{Qn.form1}
|\nabla Q_n(x)| = \frac{|\nabla Q(y-p_n)|}{|\nabla Q(-p_n)|}, 
\qquad 
\frac{|\nabla Q_n(x))|}{|Q_n(x)|^\frac32} = \frac{|\nabla Q(y-p_n)|}{|Q(y-p_n)-Q(-p_n)|^\frac32}.
\end{equation}
Moreover, assuming that \eqref{omega.lim} is satisfied, it suffices to estimate
\begin{equation}\label{Qn.form2}
\frac{|Q(y-p_n)-Q(-p_n)- \nabla Q(-p_n) \cdot y|}{|Q(y-p_n)-Q(-p_n)|(|\nabla Q(-p_n)|^\frac13|y|+1)}, \quad y\in p_n+\Omega_n
\end{equation}
to verify \eqref{iotan.def}.

\begin{theorem}\label{thm:div}
Let Assumption \ref{asm:div} be satisfied, let $T_n$ be the Dirichlet realizations of  $-\Delta + Q$  in $L^2(\Omega_n)$, $n \in \N$, and let $\{\nu_k\}$ be the eigenvalues of the complex Airy operator $\Sa$ with $\omega$ and $\vartheta$ as in \eqref{omega.lim}. Then the spectra of $T_n$ contain asymptotically the eigenvalues (with $k \in \N$ and $j \in \{1,\dots,m_a(\nu_k)\}$)
\begin{equation}\label{la.form}
\la_{k,n}^{(j)}=|\nabla Q(-p_n)|^\frac23\left(\nu_k+\rho_{k,n}^{(j)}\right)+ Q(-p_n),\quad n\to\infty,
\end{equation}
where $\rho_{k,n}^{(j)} = o_{j,k}(1)$ as $n \to \infty$ and (with $c_k>0$ as in \eqref{ck.Airy})
\begin{equation}\label{rkn.est}
\frac1{m_a(\nu_k)} \left| \sum_{j=1}^{m_a(\nu_k)} \rho_{k,n}^{(j)}\right| = \BigO_{k}(\iota_n + \exp(-c_k r_n^\frac 32)), \quad n \to \infty.
\end{equation}
\end{theorem}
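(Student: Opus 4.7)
The plan is to reduce the claim to an application of Corollary~\ref{cor:n} via a unitary rescaling that converts the diverging eigenvalues of $T_n$ into a fixed spectral feature of the complex Airy operator $\Sa$ on $L^2(\Gamma)$ from Example~\ref{ex:Airy.cone}. Introducing the unitaries $(U_n f)(y) := f(y - p_n)$ and $(V_n g)(x) := \sigma_n^{d/2} g(\sigma_n x)$, a direct computation gives
\begin{equation}
\sigma_n^2\, V_n U_n \bigl( T_n - Q(-p_n) \bigr) U_n^{-1} V_n^{-1} = -\Delta + Q_n =: \widetilde T_n
\end{equation}
on $L^2(\Sigma_n)$ with Dirichlet boundary conditions, where $Q_n$, $\sigma_n$, $\Sigma_n$ are as in Assumption~\ref{asm:div}. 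This yields the spectral mapping
\begin{equation}
\sigma(T_n) = |\nabla Q(-p_n)|^{2/3}\, \sigma(\widetilde T_n) + Q(-p_n),
\end{equation}
so that \eqref{la.form}--\eqref{rkn.est} will follow once one produces, near each $\nu_k\in\sigma(\Sa)$, exactly $m_a(\nu_k)$ eigenvalues $\mu_{k,n}^{(j)}$ of $\widetilde T_n$ whose average $\hat\mu_{k,n}$ obeys $|\nu_k-\hat\mu_{k,n}|=\BigO_k(\iota_n+\exp(-c_k r_n^{3/2}))$; one then sets $\rho_{k,n}^{(j)}:=\mu_{k,n}^{(j)}-\nu_k$.

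Next, I would verify the hypotheses of Corollary~\ref{cor:n} for $\{\widetilde T_n\}$ with $T_\infty=\Sa$ on $\Omega_\infty=\Gamma$. The cut-offs $\xi_n$ are smooth mollifications of $\chi_{B_{r_n}(0)\cap\Gamma}$ supported in $B_{r_n+1}(0)\cap\Gamma$, with uniform bounds on $|\nabla\xi_n|$ and $\Delta\xi_n$. By \eqref{Om.G.exhaust} this support lies in the region where $\Sigma_n$ and $\Gamma$ coincide, so the form-domain conditions \eqref{xi.tt0} are checked exactly as in the proof of Theorem~\ref{thm:dom.tr}; the uniform graph separation \eqref{Qn.sep} is Assumption~\ref{asm:div}.\ref{asm:div.Qn.sep}. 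The inclusion $\Omega_n\subset\Omega_\infty$ required in Assumption~\ref{asm:n}.\ref{asm:n.dom} is enforced by replacing $\Sigma_n$ with $\Sigma_n\cap\Gamma$; since $\xi_n$ is supported in $B_{r_n}(0)\cap\Gamma$, all quantities in $\tau_n$ and $\kappa_n$ depend only on the region where $\Sigma_n\cap\Gamma$ and $\Gamma$ agree.

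For the rate estimates I would argue as follows. The first summand of $\tau_n$ in \eqref{tau.n.def} is at most $\iota_n$ by definition. On $\supp\zeta_n\subset\{|x|\geq r_n\}$, property \eqref{vtx.le} yields $|\e^{\ii\omega}\vartheta\cdot x|\gs r_n$, and the smallness of $\iota_n$ forces $|Q_n|$ to be comparable to $|\e^{\ii\omega}\vartheta\cdot x|$ there, so the remaining two summands of $\tau_n$ are $\BigO(r_n^{-1})$. For $\kappa_n$ in \eqref{kap.n.def} the potential-difference term is again bounded by $\iota_n$ uniformly on the finite-dimensional range $\Ran E_k$, while the truncation term is estimated through the super-exponential decay \eqref{ck.Airy} of the eigenfunctions of $\Sa$ (see Example~\ref{ex:Airy.cone}), giving $\max_{\phi\in\Ran E_k,\,\|\phi\|=1}\|\zeta_n\phi\|=\BigO_k(\exp(-c_k r_n^{3/2}))$. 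Applying Corollary~\ref{cor:n}.iii) then supplies the required eigenvalues $\mu_{k,n}^{(j)}$ of $\widetilde T_n$ with the claimed remainder bound, and the unitary reduction above finishes the proof.

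I expect the principal obstacle to be the careful bookkeeping in the corner geometry after rescaling: checking that a single sequence $\{\xi_n\}$ is simultaneously compatible with the Dirichlet forms of $\widetilde T_n$ on $L^2(\Sigma_n)$ and $\Sa$ on $L^2(\Gamma)$; that the replacement of $\Sigma_n$ by $\Sigma_n\cap\Gamma$ does not disturb the spectral content near the fixed eigenvalues $\nu_k$; and that the global control \eqref{iotan.def} is strong enough to feed into the $\zeta_n/(Q_n+1)$ bound. These are precisely what force the normalization $\sigma_n=|\nabla Q(-p_n)|^{-1/3}$ and the exhaustion property \eqref{Om.G.exhaust}.
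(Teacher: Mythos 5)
Your argument is correct and follows essentially the same route as the paper: translate the corner $-p_n$ to the origin, subtract $Q(-p_n)$, rescale by $\sigma_n$, verify Assumption~\ref{asm:n} for the resulting operators $-\Delta+Q_n$ on $\Sigma_n$ against the Airy operator $\Sa$ on $\Gamma$ with $\tau_n\lesssim\iota_n+r_n^{-1}$ and $\kappa_n\lesssim\iota_n+\exp(-c_k r_n^{3/2})$, and map the spectrum back by the unitary equivalence. The one point to handle differently is your proposed replacement of $\Sigma_n$ by $\Sigma_n\cap\Gamma$, which would change the operator and break the unitary equivalence with $T_n$; instead one applies the two-domain estimates of Theorems~\ref{thm:res} and \ref{thm:rates} (Assumption~\ref{asm:pert}) directly, which only require the cut-off to be supported where $\Sigma_n$ and $\Gamma$ coincide, exactly as guaranteed by \eqref{Om.G.exhaust}.
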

\begin{proof}
We consider $T_n$ and perform transformations analogous to those in Example~\ref{ex:Airy} to obtain suitable $S_n$. Then we use Assumption~\ref{asm:div} to apply  Corollary~\ref{cor:n} and finally obtain \eqref{la.form} by spectral mapping.

We shift the point $-p_n$ to the origin, \ie~ we translate $x \mapsto x-p_n$, and transform $T_n$ to the unitarily equivalent operator $-\Delta+ Q(x-p_n)$ in $L^2(p_n+\Omega_n)$. Next we subtract the ``absolute term'' $Q(-p_n)$ and employ the scaling $x\mapsto\sigma_n x$ leading to the operator in $L^2(\Sigma_n)$ 
\begin{equation}
\sigma_n^{-2} S_n:=\sigma_n^{-2} \left[-\Delta + \sigma_n^2 \Big(Q(\sigma_nx-p_n)-Q(-p_n)) \Big) \right].
\end{equation}

The Taylor expansion of the potential leads further to 
\begin{equation}\label{V.T}
	S_n = -\Delta +Q_n(x)  = -\Delta +\frac{\nabla Q(-p_n) \cdot x}{|\nabla Q(-p_n)|}  +\sigma_n R(x), \quad n \in \N.
\end{equation}

In the next step, we apply Corollary~\ref{cor:n} where we replace $T_n$, $n \in \N^*$, by $S_n$, $n \in \N$, $S_\infty:=\Sa$ and $\Omega_n$ by $\Sigma_n$, $n \in \N^*$. In detail, the condition \ref{asm:n.dom} of Assumption~\ref{asm:n} is satisfied with $\Sigma_\infty := \Gamma$ as in Assumption~\ref{asm:div}. The condition \ref{asm:n.Qn} holds by Assumption~\ref{asm:div}.\ref{asm:div.Qn.sep}. 

Regarding the condition \ref{asm:n.xin} of Assumption~\ref{asm:n}, let $P_n$ being the orthogonal projection in $L^2(\Gamma)$ to $L^2(\Sigma_n \cap B_{r_n+2})$. Since \eqref{Om.G.exhaust} holds, we can construct cut-offs $\xi_n \in C^2(\ov \Gamma)$ with $0\leq \xi_n \leq 1$ be such that $\xi_n \equiv 1$ on $\ov{B_{r_n}(0) \cap \Gamma}$, $\xi_n \equiv 0$ on $\ov\Gamma \setminus \ov{B_{r_n+1}(0)}$, $n \in \N$, and such that \eqref{xin.nabla.unif} is satisfied. Moreover, using the properties of $\{\xi_n\}$, it is straightforward to verify that every $f \in \Dom(S_n) \subset W^{1,2}_0(\Sigma_n) \cap \Dom(|Q_n|^\frac12)$ satisfies $\xi_n f \in \Dom(s_\infty) =  W_0^{1,2}(\Gamma)\cap \Dom(|x|^\frac12)$ and also every $g \in \Dom(S_\infty)$ satisfies $\xi_n g \in \Dom(s_n)$.

Using \eqref{iotan.def}, we can estimate
\begin{equation}
	\left\|
	\frac{\zeta_n(x)}{Q_n(x) +1} 
	\right\|_{L^\infty(\Sigma_n)}
	\leq 
	\iota_n + 
	\left\|
	\frac{\zeta_n(x)}{\e^{\ii \omega} \vartheta \cdot x +1} 
	\right\|_{L^\infty(\Sigma_n)},
\end{equation}
thus the condition~\ref{asm:n.Q.conv} of Assumption~\ref{asm:n} is satisfied with $\tau_n:= \iota_n + r_n^{-1}$.

Corollary~\ref{cor:n} yields the convergence of $S_n$ to $\Sa$ and the eigenvalue convergence with the rate given by $\kappa_n= \iota_n + \exp(-c_k r_n^{3/2})$ where the second term originates in the decay of eigenfunctions of $\Sa$, see Example~\ref{ex:Airy.cone}.

Finally, \eqref{la.form} follows by spectral mapping since 
\begin{equation}\label{Tn.Sn}
T_n = \sigma_n^{-2} U_n S_n U_n^{-1} + Q(-p_n)	
\end{equation}
where $U_n$ are unitary transformations (the shift and scaling) described above.
\end{proof}

Theorem~\ref{thm:div}, based on Corollary~\ref{cor:n}.iii), provides a result for diverging eigenvalues, obtained by the spectral mapping and convergence of operators $S_n$ to an Airy operator, \cf~\eqref{V.T} and \eqref{Tn.Sn}. By Corollary~\ref{cor:n}.iv), the corresponding eigenfunctions (after appropriate transformations) converge to the eigenfunctions of the Airy operator, thus they localize to the corner $-p_n$.

\begin{remark}\label{rem:Sn.pert}
The claim of Theorem~\ref{thm:div} remains valid (with an additional term $\iota_n'$ as below in \eqref{rkn.est}) for perturbations of $T_n$. In detail, assume that the transformed operator $S_n$, see the proof of Theorem~\ref{thm:div}, is perturbed by $W_n$ satisfying 
\begin{equation}\label{Wn.rb}
|W_n(x)| \leq a |Q_n(x)| + b, \quad x \in \Sigma_n, \quad n \to \infty,
\end{equation}
with a sufficiently small $a>0$ (depending on $\eps_\nabla$) and some $b>0$, and	
\begin{equation}\label{iotan'.def}
\iota_n' := \left\|
	\frac{W_n(x)}{(Q_n(x)+1)(\e^{\ii \omega} \vartheta \cdot x+1)}
	\right\|_{L^\infty(\Sigma_n)} = o(1), \quad n \to \infty.
\end{equation}
The condition \eqref{Wn.rb} guarantees that the graph norms of $S_n$ and $S_n+W_n$ are equivalent (uniformly in $n$) and we obtain (for a sufficiently large $z_0>0$) that 
\begin{equation}
\|(S_n+W_n+z_0)^{-1}P_n - (\Sa+z_0)^{-1}\| = \BigO \left(\iota_n + \iota_n' + r_n^{-1} \right) = o(1)
\end{equation}
as $n \to +\infty$, see \eqref{TT0.res.est}. 
\hfill $\blacksquare$	
\end{remark}

\subsection{One dimensional imaginary potentials}
\label{subsec:im.Q}

The conditions on potential $Q$ in Assumption~\ref{asm:div} are expressed implicitly in terms of $Q_n$. In one dimensional case and when $Q$ is imaginary, we give explicit conditions on $Q$ which guarantee that the Assumption~\ref{asm:div} is satisfied. To avoid working with conditions at $-\infty$  we express $Q$ in a specific way in terms of a new function $U$, namely as 
\begin{equation}\label{Q.U.def}
Q(x)= - \ii U(-x), \quad x \in \R.
\end{equation}
\begin{asm-sec}\label{asm:U}
Let $U \in C^1(\R;\R) \cap C^2((x_0,\infty))$ with a sufficiently large $x_0>0$ as below satisfy the condition \eqref{Q.asm.sep} on $\R$ with an abitrarily small $\eps_\nabla>0$ (where we replace $Q$ by $U$). Let $\Omega_n=(-s_n,t_n)$ with $s_n \nearrow +\infty$ and 
\begin{equation}\label{sn.1d}
\lim_{n \to \infty}(s_n+t_n) \sigma_n^{-1} = +\infty,	
\end{equation}
where $\sigma_n:=|U(s_n)|^{- \frac 13}$. Suppose further that
\begin{enumerate}[\upshape i), wide]
\item \label{asm:U.U'.inf}  $U$ is eventually increasing and unbounded at $+\infty$: 
\begin{equation}\label{asm:U'}
U'(x) > 0, \quad x > x_0, \qquad \lim_{x \to +\infty }U(x)= +\infty;
\end{equation}

\item $U$ has controlled derivatives: there is $\nu\geq-1$ such that
\begin{equation}\label{asm:U.nu}
U'(x) \ls U(x) x^\nu, \qquad |U''(x)| \ls U'(x) x^\nu, \qquad x>x_0,
\end{equation}
\item\label{asm:U*}
 $U$ grows sufficiently fast at $+ \infty$:
\begin{equation}\label{Ups.def}
	\Upsilon(x):= \frac{x^{\nu}}{U'(x)^{\frac13}} \to 0, \quad x \to + \infty
\end{equation}
\item \label{asm:U.neg} $U$ is relatively smaller on $(-\infty,x_0)$:  there exists $\delta_0 \in (0,1)$ such that 
\begin{equation}\label{U.left}
	\sup_{y \in (s_n-x_0,s_n+t_n)} U(s_n-y) \leq (1-\delta_0) U(s_n), \quad n \to \infty.
\end{equation}
\hfill $\blacksquare$
\end{enumerate}
\end{asm-sec}
By Gronwalls inequality, \eqref{asm:U.nu} implies that for all sufficiently large $x>0$
\begin{equation}\label{V.beh}
U(x) \ls \begin{cases}
x^{\gamma},&  \nu = -1,
\\	
\exp(\gamma x^{\nu+1}), & \nu > -1.
\end{cases}
\end{equation}
with some $\gamma>0$. Moreover, there exist constants $c_1,c_2>0$ such that for all sufficiently large  $x>0$ and for all $|\delta|\leq \tfrac 14 |x|^{-\nu}$,  we have
\begin{equation} \label{eq:asm}
c_1 U^{(j)}(x) \leq U^{(j)}(x+\delta) \leq c_2 U^{(j)}(x), \qquad j=0,1;
\end{equation}
for details see~\cite[Sec.~3.1]{Krejcirik-2019-276} and \cite[Sec.~2]{MiSiVi-2020}. 
We also remark that Assumption~\ref{asm:U}.\ref{asm:U*} is related with the condition \eqref{Q.nab.as} since, by \eqref{asm:U.nu}, 
\begin{equation}
\frac{U'(x)}{U(x)^\frac32} \ls \frac{U'(x)}{(U'(x) x^{-\nu})^\frac32} = \Upsilon^\frac32(x) \to  0, \quad x \to +\infty.
\end{equation}

\begin{theorem}	\label{thm:1D.im}
Let Assumption~\ref{asm:U} be satisfied, let $Q$, $U$ be as in \eqref{Q.U.def} and let
\begin{equation}\label{sigma.Sigma.def,1d}
\sigma_n= U'(s_n)^{-\frac13}, \qquad  \Sigma_n:=(0,(s_n+t_n) \sigma_n^{-1}).
\end{equation}
Then the potentials (with $x \in \Sigma_n$)
\begin{equation}\label{Vn.U1.def}
Q_n(x) =  \sigma_n^2 \big( Q(\sigma_n x-s_n)-Q(-s_n) \big) 
= \ii \sigma_n^2 \big(U(s_n)-U(s_n-\sigma_n x) \big), 
\end{equation}
satisfy Assumption~\ref{asm:div} with $\omega=\pi/2$, $\vartheta=1$, $\Gamma=\R_+$ and $\iota_n \ls \Upsilon(s_n)$.

Hence the spectra of Dirichlet realizations $T_n = \Dti + Q$ in $L^2(\Omega_n)$, $n \in \N$, contain asymptotically as $n \to \infty$ the eigenvalues 
\begin{equation}\label{la.1d}
\la_{k,n}=U'(s_n)^\frac23\left(\nu_k+\rho_{k,n}\right) - \ii U(s_n),
\quad 
\rho_{k,n} = \BigO_{k} \big(\Upsilon(s_n) + \exp(-c_k r_n^\frac 32) \big), 
\end{equation}
where $c_k>0$ are as in \eqref{ck.Airy} and $r_n=(s_n+t_n) \sigma_n^{-1}-2$.
\end{theorem}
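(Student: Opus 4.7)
The plan is to verify all four parts of Assumption~\ref{asm:div} with the choices $\omega = \pi/2$, $\vartheta = 1$, and $\Gamma = \R_+$, and then invoke Theorem~\ref{thm:div}. Because $Q(x) = -\ii U(-x)$, a direct computation gives $p_n = s_n$, $Q(-p_n) = -\ii U(s_n)$, $\nabla Q(-p_n) = \ii U'(s_n)$, and $\sigma_n = U'(s_n)^{-1/3}$. By \eqref{asm:U'}, $U'(s_n)>0$ for all sufficiently large $n$, which verifies \ref{asm:div.sn} and forces $\nabla Q(-p_n)/|\nabla Q(-p_n)| = \ii = \e^{\ii \pi/2}\cdot 1$; the cone condition $\vartheta\cdot\theta>0$ is trivial for $\Gamma=\R_+$. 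Translating and rescaling as in \eqref{t.Om.sigma.def} yields $\Sigma_n = (0,(s_n+t_n)\sigma_n^{-1})$, and \eqref{sn.1d} permits the choice $r_n:=(s_n+t_n)\sigma_n^{-1}-2\to\infty$, securing \ref{asm:div.Sign}.

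For \ref{asm:div.Qn.sep}, the substitution $z:=s_n-\sigma_n x \in (-t_n,s_n)$ in \eqref{Qn.form1} gives $|\nabla Q_n(x)|/|Q_n(x)|^{3/2} = U'(z)/|U(s_n)-U(z)|^{3/2}$ and $|\nabla Q_n(x)| = U'(z)/U'(s_n)$. I would split into three regimes. In the Taylor zone $|s_n-z|\leq\tfrac14 s_n^{-\nu}$, the two-sided bound \eqref{eq:asm} gives $U'(z)\approx U'(s_n)$, so $|\nabla Q_n(x)|\approx 1$ is absorbed into $M_\nabla$. For $z\in(x_0,\, s_n-\tfrac14 s_n^{-\nu})$, combining the hypothesis on $U$ (with arbitrary $\eps_\nabla>0$) with $|U(s_n)-U(z)|\gtrsim U(s_n)$, obtained from monotonicity \eqref{asm:U'} and an integrated MVT using \eqref{asm:U.nu}, gives $U'(z)/|U(s_n)-U(z)|^{3/2}\leq C\eps_\nabla+o(1)$. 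Finally, for $z\leq x_0$, the asymmetry \eqref{U.left} yields $|U(s_n)-U(z)|\geq\delta_0 U(s_n)$ while $U'(z)$ is controlled by compactness and the hypothesis on $U$.

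The core computation is the bound $\iota_n\ls\Upsilon(s_n)$ required for \ref{asm:div.Qn.conv}. A second-order Taylor expansion of $U$ about $s_n$, together with the normalization $\sigma_n^3 U'(s_n)=1$, gives
\begin{equation*}
Q_n(x)-\ii x = -\tfrac{\ii}{2}\, U''(\eta)\,\sigma_n^4\,x^2, \qquad \eta\in(s_n-\sigma_n x,\, s_n).
\end{equation*}
For $|x|\leq\Upsilon(s_n)^{-1}/4$, \eqref{eq:asm} keeps $U'(\eta)\approx U'(s_n)$, and then \eqref{asm:U.nu} upgrades this to $|U''(\eta)|\ls U'(s_n)s_n^\nu$, whence $|Q_n(x)-\ii x|\ls\Upsilon(s_n)\,x^2$; dividing by $(|Q_n(x)|+1)(|x|+1)\geq(|x|+1)^2\gtrsim x^2$ produces the desired $\ls\Upsilon(s_n)$. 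For $|x|\gtrsim\Upsilon(s_n)^{-1}$, I would instead estimate
\begin{equation*}
\frac{|Q_n(x)-\ii x|}{(|Q_n(x)|+1)(|x|+1)} \leq \frac{1}{|x|+1} + \frac{1}{|Q_n(x)|+1};
\end{equation*}
the first summand is $\ls\Upsilon(s_n)$ directly, and for the second one argues $|Q_n(x)|\gtrsim\Upsilon(s_n)^{-1}$ by combining (i) the MVT bound $|Q_n(x)|\approx|x|$ in the intermediate Taylor regime, and (ii) for $z\leq x_0$, the separation $|Q_n(x)|\geq\delta_0 U(s_n)/U'(s_n)^{2/3}$ from \eqref{U.left}, which exceeds $U'(s_n)^{1/3}/s_n^\nu = \Upsilon(s_n)^{-1}$ precisely because $U'(s_n)\ls U(s_n)s_n^\nu$ by \eqref{asm:U.nu}.

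With Assumption~\ref{asm:div} verified, Theorem~\ref{thm:div} applies, and substituting $|\nabla Q(-p_n)|^{2/3}=U'(s_n)^{2/3}$ and $Q(-p_n)=-\ii U(s_n)$ into \eqref{la.form} yields exactly \eqref{la.1d}, with remainder $\BigO_k(\iota_n+\exp(-c_k r_n^{3/2})) = \BigO_k(\Upsilon(s_n)+\exp(-c_k r_n^{3/2}))$. The principal obstacle I anticipate is the global control of $\iota_n$ over the whole of $\Sigma_n$: since $r_n$ may grow arbitrarily fast relative to $\Upsilon(s_n)^{-1}$, the Taylor argument alone handles only a proper subinterval, and covering the complementary far zone requires balancing the controlled-growth hypothesis \eqref{asm:U.nu} against the asymmetry condition \eqref{U.left}.
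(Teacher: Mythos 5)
Your overall architecture coincides with the paper's: verify Assumption~\ref{asm:div} with $\omega=\pi/2$, $\vartheta=1$, $\Gamma=\R_+$ by splitting $\Sigma_n$ into a Taylor window at the corner, an intermediate zone, and the far zone governed by \eqref{U.left}, then invoke Theorem~\ref{thm:div} and spectral mapping. The verification of \ref{asm:div.sn} and \ref{asm:div.Sign}, the identity $Q_n(x)-\ii x=-\tfrac{\ii}{2}U''(\eta)\sigma_n^4x^2$ with the resulting $\lesssim\Upsilon(s_n)$ bound for $|x|\le\tfrac14\Upsilon(s_n)^{-1}$, and the treatment of the zone $z=s_n-\sigma_n x\le x_0$ via \eqref{U.left} are all correct and essentially identical to the paper's proof.

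The gap is in the intermediate zone $z\in(x_0,\,s_n-\tfrac14 s_n^{-\nu})$. Both of your key claims there --- $U(s_n)-U(z)\gtrsim U(s_n)$ and the ``MVT bound'' $|Q_n(x)|\approx|x|$ --- do not follow from Assumption~\ref{asm:U} and are false in general, because \eqref{asm:U.nu} is only an \emph{upper} bound $U'\lesssim Ux^\nu$: it cannot force the increment of $U$ over a window of length $\tfrac14 s_n^{-\nu}$ to be comparable to $U(s_n)$, nor $U'(\theta)$ at the mean-value point $\theta$ to be comparable to $U'(s_n)$ once $\theta$ leaves the \eqref{eq:asm} window. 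Concretely, $U(x)=\sgn(x)|x|^{10}$ with the admissible choice $\nu=0$ gives $U(s_n)-U(s_n-\tfrac14)\approx\tfrac{10}{4}s_n^{9}=o(U(s_n))$ at the right end of your zone (and one cannot escape by taking $\nu$ minimal: already for $U=\e^x$ one has $U'(\theta)/U'(s_n)\to0$ for $\theta$ bounded away from $s_n$, so $|Q_n(x)|\approx|x|$ fails). The correct replacement is purely local, as in the paper: by monotonicity and \eqref{eq:asm}, $U(s_n)-U(z)\ge\int_z^{z+\frac14 z^{-\nu}}U'(t)\,\dd t\gtrsim U'(z)z^{-\nu}$, whence $U'(z)/|U(s_n)-U(z)|^{3/2}\lesssim\Upsilon(z)^{3/2}$; and $U(s_n)-U(z)\ge U(s_n)-U(s_n-\tfrac14 s_n^{-\nu})\gtrsim U'(s_n)s_n^{-\nu}$, which yields exactly the bound $|Q_n(x)|\gtrsim\Upsilon(s_n)^{-1}$ you need. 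Note that $\Upsilon(z)^{3/2}$ is small only for large $z$, so this repair also forces the extra sub-zone $z\in[x_0,x_1]$ with $x_1$ large (handled by local boundedness of $U,U'$ together with $U(s_n)\to\infty$), which your three-zone split omits. With these corrections the argument closes.
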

\begin{proof}
Due to \eqref{sn.1d}, Assumption~\ref{asm:div}.\ref{asm:div.sn} and \ref{asm:div.Sign} are satisfied and we have $\omega = \pi/2$ and $\vartheta=1$, see \eqref{omega.lim}.
Next we verify Assumption~\ref{asm:div}.\ref{asm:div.Qn.sep}, in particular the condition \eqref{Qn.sep}. We use the variable variable $y=\sigma_n x \in (0,s_n+t_n)$ and formula \eqref{Qn.form1}.
\begin{equation}
|Q_n'(x)| \ls \frac{|U'(s_n-y)|}{U'(s_n)} \ls 1, \qquad n \to \infty.
\end{equation}
Next let $y \in [ \tfrac 14 s_n^{-\nu},s_n-x_1) \cap (0,s_n+t_n)$ with $x_1 \geq x_0$ sufficiently large so that in particular \eqref{eq:asm} holds for all $x>x_1$; a further restriction on the choice $x_1$ is below. First note that $s_n-y + \tfrac14 (s_n-y)^{-\nu} \leq s_n$, hence using \eqref{asm:U'} and \eqref{eq:asm}, we obtain 
\begin{equation}
U(s_n)-U(s_n-y) \geq \int_{s_n-y}^{s_n-y+\tfrac14(s_n-y)^{-\nu}} U'(t) \, \dd t	
\gs U'(s_n-y) (s_n-y)^{-\nu}.
\end{equation}
Thus
\begin{equation}\label{Qn.1d.1}
\frac{|U'(s_n-y)|}{|U(s_n)-U(s_n-y)|^\frac32} 
\ls 
\Upsilon^\frac32(s_n-y)
\end{equation}
and so by \eqref{Ups.def}, $x_1$ can be selected so large that, for all $y$ in the considered range, the 
right hand side of \eqref{Qn.1d.1} is arbitrarily small.

For  $y \in [s_n-x_1,s_n-x_0) \cap (0,s_n+t_n)$, it suffices to use that $U$ and $U'$ are locally bounded and $U$ is unbounded at $+\infty$, see \eqref{asm:U'}.

In the last case, $y \in [s_n-x_0,s_n+t_n)$, we use that $Q$ and thus $U$ satisfy \eqref{Q.asm.sep} and 
that $U(s_n)-U(s_n-y) \geq \delta_0  U(s_n)$ from Assumption \ref{asm:U}.\ref{asm:U.neg},
\begin{equation*}
\begin{aligned}
\frac{|U'(s_n-y)|}{|U(s_n)-U(s_n-y)|^\frac32} 
&\leq \eps_\nabla \left| \frac{U(s_n-y)}{U(s_n)-U(s_n-y)} \right|^\frac32 
	  + \frac{M_\nabla}{|U(s_n)-U(s_n-y)|^\frac32}
\\ 
& \leq \eps_\nabla \left(1+ \frac{1}{\delta_0}\right)^\frac32 + \frac{M_\nabla}{\delta_0 U(s_n)};
\end{aligned}
\end{equation*}
note that $\eps_\nabla$ can be taken arbitrarily small by assumption and the second term decays. Hence, 
putting estimates above together we obtain that the condition \eqref{Qn.sep} is indeed satisfied.

Finally, we show that Assumption~\ref{asm:div}.\ref{asm:div.Qn.conv} is satisfied using \eqref{Qn.form2}.  In the first case,  $y \in (0, \tfrac 14 s_n^{-\nu}) \cap (0,s_n+t_n)$, Taylor's theorem,   \eqref{asm:U.nu} and \eqref{eq:asm} yield
\begin{equation}
\left|\frac{U(s_n)-U(s_n-y)-U'(s_n) y}{U'(s_n)^\frac13 (U(s_n)-U(s_n-y))y}\right|
 \ls
 \frac{U'(s_n) s_n^\nu y^2}{U'(s_n)^\frac43 y^2 } \ls \Upsilon(s_n). 
\end{equation}
For the remaining steps, we use the estimate
\begin{equation}
\left|\frac{U(s_n)-U(s_n-y)-U'(s_n) y}{U'(s_n)^\frac13 (U(s_n)-U(s_n-y))y}\right|
\leq
\frac{1}{U'(s_n)^\frac13 y} + \frac{U'(s_n)^\frac23}{U(s_n)-U(s_n-y)}.
\end{equation}
In the case  $y \in [\tfrac14 s_n^{-\nu},s_n-x_0) \cap (0,s_n+t_n)$, using $U'(x)>0$ for $x>x_0$, mean value theorem and \eqref{eq:asm}, we get
\begin{equation}
\frac{1}{U'(s_n)^\frac13 y} + \frac{U'(s_n)^\frac23}{U(s_n)-U(s_n-y)}
\ls
\Upsilon(s_n) + \frac{U'(s_n)^\frac23}{U'(s_n)s_n^{-\nu}} 
\ls 
\Upsilon(s_n). 
\end{equation}
Finally, if $y \in [s_n-x_0,s_n+t_n)$, we obtain from \eqref{U.left} and \eqref{asm:U.nu}  that
\begin{equation*}
\frac{1}{U'(s_n)^\frac13 y} + \frac{U'(s_n)^\frac23}{U(s_n)-U(s_n-y)}
\ls
\left(\frac{s_n^{3\nu}}{U'(s_n)} \right)^\frac13 \frac{1}{s_n^{1+\nu}} + \frac{U'(s_n)^\frac23}{\delta_0 U(s_n)} 
\ls 
\Upsilon(s_n). 
\end{equation*}
Thus in summary, we obtain the estimate on $\iota_n$ in the claim.
\end{proof}			
			
In the next step, following Remark~\ref{rem:Sn.pert}, we determine a class of admissible perturbations of $U$ as in Assumption~\ref{asm:U}.

\begin{proposition}\label{prop:pert}
Let Assumption~\ref{asm:U} be satisfied. Suppose that $U_1 \in L_{\rm loc}^\infty(\R;\C)$, $U_1'\in L_{\rm loc}^\infty((x_1,\infty);\C)$ for some $x_1>0$ and (using notation of Assumption~\ref{asm:U}),
\begin{equation}\label{U1.asm}
U_1'(x) = o(U'(x)), \quad x \to +\infty, 
\quad
\|U_1\|_{L^\infty((-t_n,s_n))} = o(U(s_n)),  
\end{equation}
Then, with $\sigma_n$ as in \eqref{Vn.U1.def},
\begin{equation}
W_n(x) := \sigma_n^2 (U_1(s_n)-U_1(s_n-\sigma_n x)), \qquad x \in (0, (s_n+t_n) \sigma_n^{-1})
\end{equation} 	
satisfies the conditions \eqref{Wn.rb} and \eqref{iotan'.def} with respect to $Q_n$ as in \eqref{Vn.U1.def}. 
Hence the claim of Theorem~\ref{thm:1D.im} remains valid with (see Remark~\ref{rem:Sn.pert})
\begin{equation}\label{la.1d.pert}
\begin{aligned}
\la_{k,n}&=U'(s_n)^\frac23\left(\nu_k+\rho_{k,n}'\right) - \ii U(s_n) - U_1(s_n),
\\
\rho_{k,n}' &= \BigO_{k} \big(\Upsilon(s_n) + \iota_n'+ \exp(-c_k r_n^\frac 32) \big).
\end{aligned}
\end{equation}

In particular if the support of $U_1$ is bounded, then
\begin{equation}\label{iota.U1.supp}
	\iota_n' = \BigO(U(s_n)^{-1} \Upsilon(s_n) s_n^{\nu-1}), \quad n \to \infty.
\end{equation}

\end{proposition}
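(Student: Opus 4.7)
The plan is to verify that the perturbation $W_n$ satisfies both hypotheses \eqref{Wn.rb} and \eqref{iotan'.def} of Remark~\ref{rem:Sn.pert} with respect to the $Q_n$ of Theorem~\ref{thm:1D.im}; the asymptotic formula \eqref{la.1d.pert} then follows directly from that remark combined with the error bound of Theorem~\ref{thm:1D.im}. Both conditions are checked by a case split on $y := \sigma_n x \in (0, s_n + t_n)$ paralleling the one in the proof of Theorem~\ref{thm:1D.im}.

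\textbf{Step 1: The relative bound \eqref{Wn.rb}.} Given $\epsilon > 0$, the hypothesis $U_1'(t) = o(U'(t))$ supplies an $X_\epsilon > x_1$ such that $|U_1'(t)| \leq \epsilon U'(t)$ for $t > X_\epsilon$. Writing $W_n(x) = \sigma_n^2 \int_{s_n - y}^{s_n} U_1'(t)\,dt$ and $Q_n(x) = \ii\sigma_n^2 \int_{s_n - y}^{s_n} U'(t)\,dt$, and using $U'(t) > 0$ for $t > x_0$ (Assumption~\ref{asm:U}.\ref{asm:U.U'.inf}), on the range $y \leq s_n - X_\epsilon$ one immediately gets $|W_n(x)| \leq \epsilon |Q_n(x)|$. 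On the complementary range the triangle inequality together with the second half of \eqref{U1.asm} yields $|W_n(x)| \leq 2\sigma_n^2 \|U_1\|_{L^\infty((-t_n, s_n))} = o(\sigma_n^2 U(s_n))$, while the lower bound $|Q_n(x)| \geq \delta_0 \sigma_n^2 U(s_n)$ from \eqref{U.left} gives $|W_n| = o(|Q_n|)$ uniformly. Hence \eqref{Wn.rb} holds with $a$ arbitrarily small (absorbing finitely many small $n$ into the constant $b$).

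\textbf{Step 2: The smallness $\iota_n' = o(1)$.} I would repeat the three-region case split used in the proof of Theorem~\ref{thm:1D.im} (small $y$, intermediate $y$, and $y$ near $s_n + t_n$), this time estimating $W_n(x)/[(Q_n(x) + 1)(\ii x + 1)]$ in place of the Taylor remainder that appears there. For small $y \in (0, \tfrac14 s_n^{-\nu})$ the mean value theorem together with $|U_1'| \leq \epsilon U'$ and \eqref{eq:asm} gives $|W_n(x)| \lesssim \epsilon x$ and $|Q_n(x) + 1| \gtrsim |\ii x + 1|$, so the ratio is $O(\epsilon)$. On the intermediate and outer ranges, the same mechanisms as in the proof of Theorem~\ref{thm:1D.im} apply, with each occurrence of $U'$ in the numerator effectively replaced by $\epsilon U'$ and an additional contribution from the outer range controlled by $\|U_1\|_\infty / U(s_n) = o(1)$. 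Letting $\epsilon \to 0$ concludes $\iota_n' = o(1)$.

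\textbf{Step 3: The bounded-support refinement \eqref{iota.U1.supp}.} If $\supp U_1 \subset [-M, M]$, then for $n$ large enough $U_1(s_n) = 0$, and $W_n(x) \neq 0$ forces $\sigma_n x \in [s_n - M, s_n + M]$. On this narrow range $|\ii x + 1| \gtrsim s_n U'(s_n)^{1/3}$ and $|Q_n(x)| \gtrsim \sigma_n^2 U(s_n)$, the latter because $U(s_n - \sigma_n x)$ stays bounded while $U(s_n) \to \infty$. Substituting the trivial bound $|W_n(x)| \leq \sigma_n^2 \|U_1\|_\infty$ into \eqref{iotan'.def} and simplifying via $\sigma_n = U'(s_n)^{-1/3}$ and $\Upsilon(s_n) = s_n^\nu / U'(s_n)^{1/3}$ produces a bound of the form asserted in \eqref{iota.U1.supp}. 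The principal obstacle is Step 2: in the intermediate range one cannot appeal to Taylor's formula for $W_n$ (since $U_1$ is only assumed $W^{1,\infty}_{\rm loc}$), so one must balance the smallness of $U_1'/U'$ far from $s_n$, of $\|U_1\|_\infty/U(s_n)$ near $s_n$, and of $1/|\ii x + 1|$, uniformly across $y$.
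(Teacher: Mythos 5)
Your Step 1 is essentially the paper's argument: write $W_n$ and $Q_n$ as integrals of $U_1'$ and $U'$ over $(s_n-y,s_n)$, use $|U_1'|\le\eps_0 U'$ on $[0,s_n-y_0]$ to get $|W_n|\le\eps_0|Q_n|$ there, and on the complementary range bound $|W_n|$ by $2\sigma_n^2\|U_1\|_{L^\infty((-t_n,s_n))}$ against a lower bound $|Q_n|\gtrsim\sigma_n^2 U(s_n)$. One small caveat: \eqref{U.left} only gives that lower bound for $y\in(s_n-x_0,s_n+t_n)$, so on the intermediate piece $y\in[s_n-y_0,s_n-x_0)$ you should add a line using the monotonicity and unboundedness of $U$ (so that $U(s_n)-U(s_n-y)\ge\tfrac12 U(s_n)$ eventually); the paper absorbs this into a constant $\delta(y_0)$.

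Where you genuinely diverge from the paper is Step 2, and there you make the problem harder than it is. Since $Q_n$ is purely imaginary, $|Q_n+1|\ge|Q_n|$, and $|\e^{\ii\omega}\vartheta\cdot x+1|\ge1$, so the quantity in \eqref{iotan'.def} is pointwise dominated by $|W_n|/|Q_n|$, and the bound $\iota_n'=o(1)$ is an immediate corollary of Step 1 with the same two-range split. This is exactly what the paper does, in one display. Consequently the ``principal obstacle'' you flag at the end --- that you cannot Taylor-expand $W_n$ on the intermediate range --- is not an obstacle at all: no Taylor expansion of $U_1$ is needed anywhere, only the integral representation $W_n(x)=\sigma_n^2\int_{s_n-y}^{s_n}U_1'(t)\,\dd t$ on $[0,s_n-y_0]$ and the sup-norm bound near the corner. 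Your proposed three-region analysis with the mean value theorem and \eqref{eq:asm} would also work for the first region, but it buys nothing. The extra factor $(|U'(s_n)|^{1/3}y+1)$ in the denominator is only exploited in the bounded-support case, which is your Step 3 and again matches the paper (fix $y_0$, note $W_n$ is supported where $y\approx s_n$, so $x\approx s_nU'(s_n)^{1/3}$, and read off the decay). Modulo the two small points above, the proof goes through.
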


\begin{proof}
Using the first assumption in \eqref{U1.asm}, for any $\eps_0>0$, there exists a sufficiently large $y_0>0$ such that (with $\sigma_n x = y$)
\begin{equation}
\frac{|W_n(x)|}{|Q_n(x)|}
\leq 
\frac{ \int_{s_n-y}^{s_n} \frac{|U_1'(t)|}{U'(t)} U'(t) \, \dd t }{U(s_n)-U(s_n-y)}
\leq \eps_0, \quad y \in [0, s_n-y_0]. 	
\end{equation}
From \eqref{U.left} we have that (with some $\delta(y_0)>0$)
\begin{equation}
	U(s_n) - U(s_n-y) \geq \delta(y_0) U(s_n), \quad y \in [s_n-y_0,s_n+t_n],
\end{equation}
thus 
\begin{equation}\label{Wn.Qn.2}
\frac{|W_n(x)|}{|Q_n(x)|}
\leq 
\frac{ 2\|U_1\|_{L^\infty((-t_n,s_n))}}{\delta (y_0) U(s_n)}, \quad y \in [s_n-y_0,s_n+t_n]. 	
\end{equation}
Using the second assumption in \eqref{U1.asm}, the right hand side of \eqref{Wn.Qn.2} decays as $n \to \infty$. In summary, the condition \eqref{Wn.rb} is satisfied.

Similarly, for a given $\eps_0>0$, we split the estimate to $[0,s_n-y_0]$ and $[s_n-y_0,s_n+t_n]$ with a suitable $y_0$ so that
\begin{equation}
\left|\frac{U_1(s_n)-U_1(s_n-y)}{(U(s_n)-U(s_n-y))(|U'(s_n)|^\frac13 y+1)} \right|  \leq \eps_0 + 
\frac{2\|U_1\|_{L^\infty((-t_n,s_n))}}{\delta(y_0)U(s_n)(|U'(s_n)|^\frac13 y+1)}.
\end{equation}
The validity of \eqref{iotan'.def} follows from the second assumption in \eqref{U1.asm} and since  $\eps_0$ can be taken arbitrarily small.
In the case of bounded support of $U_1$, we can take fixed $y_0$ and obtain \eqref{iota.U1.supp}.
\end{proof}

\begin{remark}[First eigenvalue correction]
\label{rem:1st.cor}
In Theorem~\ref{thm:1D.im}, the eigenvalues $\{\nu_k\}$ of $\Sa$ are simple. We introduce the truncated imaginary Airy operator in $L^2(\Sigma_n)$
\begin{equation}
\widetilde S_n := \Dti + \ii  x, \qquad \Dom(\widetilde S_n) = W^{2,2}(\Sigma_n) \cap W_0^{1,2}(\Sigma_n)  \cap \Dom(x)
\end{equation}
and write 
\begin{equation}\label{lak.nuk}
\la_{k,n} - \nu_k = \la_{k,n} - \widetilde \la_{k,n} + \widetilde \la_{k,n} - \nu_k,
\end{equation}
where $\widetilde \la_{k,n}$ are eigenvalues of $\widetilde S_n$ which converge to $\nu_k$ as $n \to \infty$. Recall that the rate of $\widetilde \la_{k,n} - \nu_k$ is super-exponential since it corresponds to the eigenvalue convergence of domain truncations for the imaginary Airy operator, see Theorem~\ref{thm:dom.tr}. Thus the main term in $\la_{k,n} - \nu_k$ usually arises from $\la_{k,n} - \widetilde \la_{k,n}$, \ie~from the difference of eigenvalues of the truncated imaginary Airy operator $\widetilde S_n$ and the ``perturbed'' truncated imaginary Airy operator $S_n$. 

The standard perturbation theory can be used to express $\la_{k,n} - \widetilde \la_{k,n}$  as
\begin{equation}\label{1st.cor}
	\la_{k,n} - \widetilde \la_{k,n} = \frac{\langle  (Q_n(x)-\ii x) \psi_{k,n}, \psi_{k,n}^* \rangle}{\langle\psi_{k,n}, \psi_{k,n}^*\rangle }   + \widetilde \rho_{k,n},
\end{equation}
where $\psi_{k,n}$, $\psi_{k,n}^*$ are eigenfunctions of $\widetilde S_n$ and $\widetilde S_n^*$ associated with $\widetilde \la_{k,n}$ and $\widetilde \la_{k,n}^*$, respectively. Finally, in \eqref{1st.cor} one can further replace $\psi_{k,n}$ by $\psi_k$, the eigenfunctions of $\Sa$ associated with $\nu_k$, producing typically only an exponentially small error due to the fast convergence of $\psi_{k,n}$ to $\psi_k$, see Theorem~\ref{thm:dom.tr} and decay estimates on eigenfunctions, see Theorem~\ref{thm:EF.dec} and Examples~\ref{ex:EF.dec}, \ref{ex:Airy.cone}. We implement these observations in Example~\ref{ex:Q.odd} below.
\hfill $\blacksquare$
\end{remark}

\section{Examples}
\label{sec:ex.div}

We illustrate the results on several examples. We start with the one dimensional ones, where Theorem~\ref{thm:1D.im} and Proposition~\ref{prop:pert} are employed. Next we analyze a radially symmetric cases truncated to annuli, still using the one dimensional results. Finally, Theorem~\ref{thm:div} is used in a two dimensional example with truncations to domains with corners.

\subsection{One dimensional examples}
\label{subsec:ex.1D}

\begin{example}[Odd imaginary potentials]\label{ex:Q.odd}
Let $U:\R \to \R$ be odd and satisfy Assumption~\ref{asm:U}; note that \eqref{U.left} holds automatically if the previous conditions are satisfied. We consider Dirichlet realizations $T_n = \Dti + \ii U$ in $L^2((-s_n,s_n))$ with $s_n \nearrow + \infty$. Since $U$ is odd, \eqref{Q.U.def} corresponds to the relation $Q=\ii U$, thus by Theorem~\ref{thm:1D.im}, the spectra of $T_n$ contain asymptotically the eigenvalues $\{\lambda_{k,n}\}_k$ in~\eqref{la.1d}. Due to the antilinear symmetry of $T_n$ ($x \mapsto -x$ together with complex conjugation, the so-called $\cP\cT$-symmetry), the spectra of $T_n$ contain also $\{\overline{\lambda}_{k,n}\}_k$. 

In particular, $U(x)=\sgn(x)|x|^\alpha$ with $\alpha>0$, satisfies Assumption~\ref{asm:U} with $\nu =-1$ and a possible lack of differentiability of $U$ at $0$ can be treated by splitting $U= \eta U + (1-\eta) U$ with $\eta \in C_0^\infty((-2,2))$ and $\eta = 1$ on $(-1,1)$. Notice that $U_1 = \eta U$ satisfies assumptions of Proposition~\ref{prop:pert}. Hence we obtain 
\begin{equation}
\lambda_{k,n}=\alpha^\frac23 s_n^{\frac{2(\alpha-1)}{3}}\left(\nu_k+\BigO_{k}\left(s_n^{-\frac{2+\alpha}{3}} \right)\right)-\ii s_n^\alpha, 
\end{equation}
and their complex conjugates; see Figures~\ref{fig:ix} and \ref{fig:ix3} for illustration in two well-known special cases (the imaginary Airy operator and imaginary cubic oscillator).
\begin{figure}[htb!]
\includegraphics[width=0.47\textwidth]{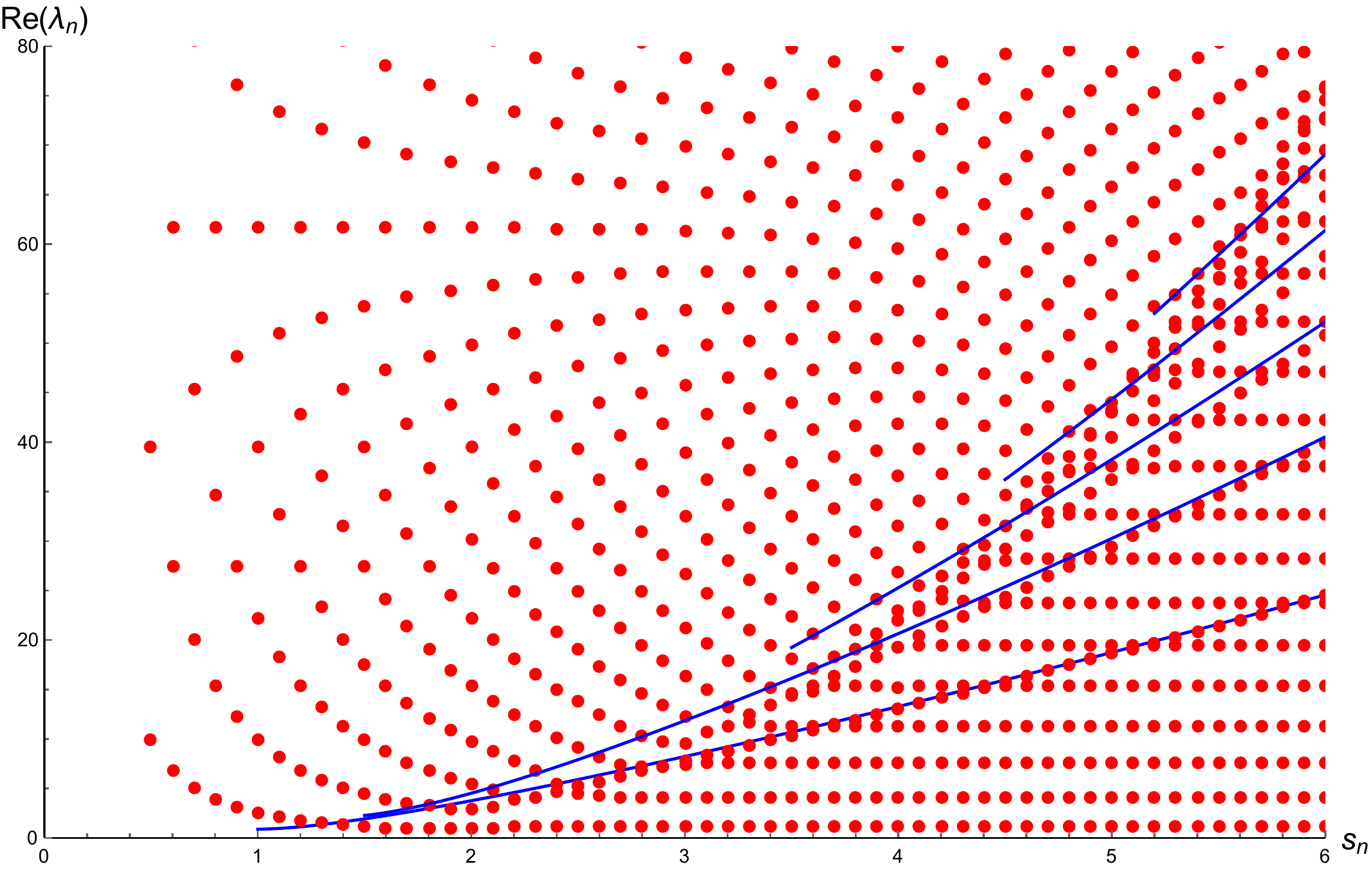}
\hfill
\includegraphics[width=0.47\textwidth]{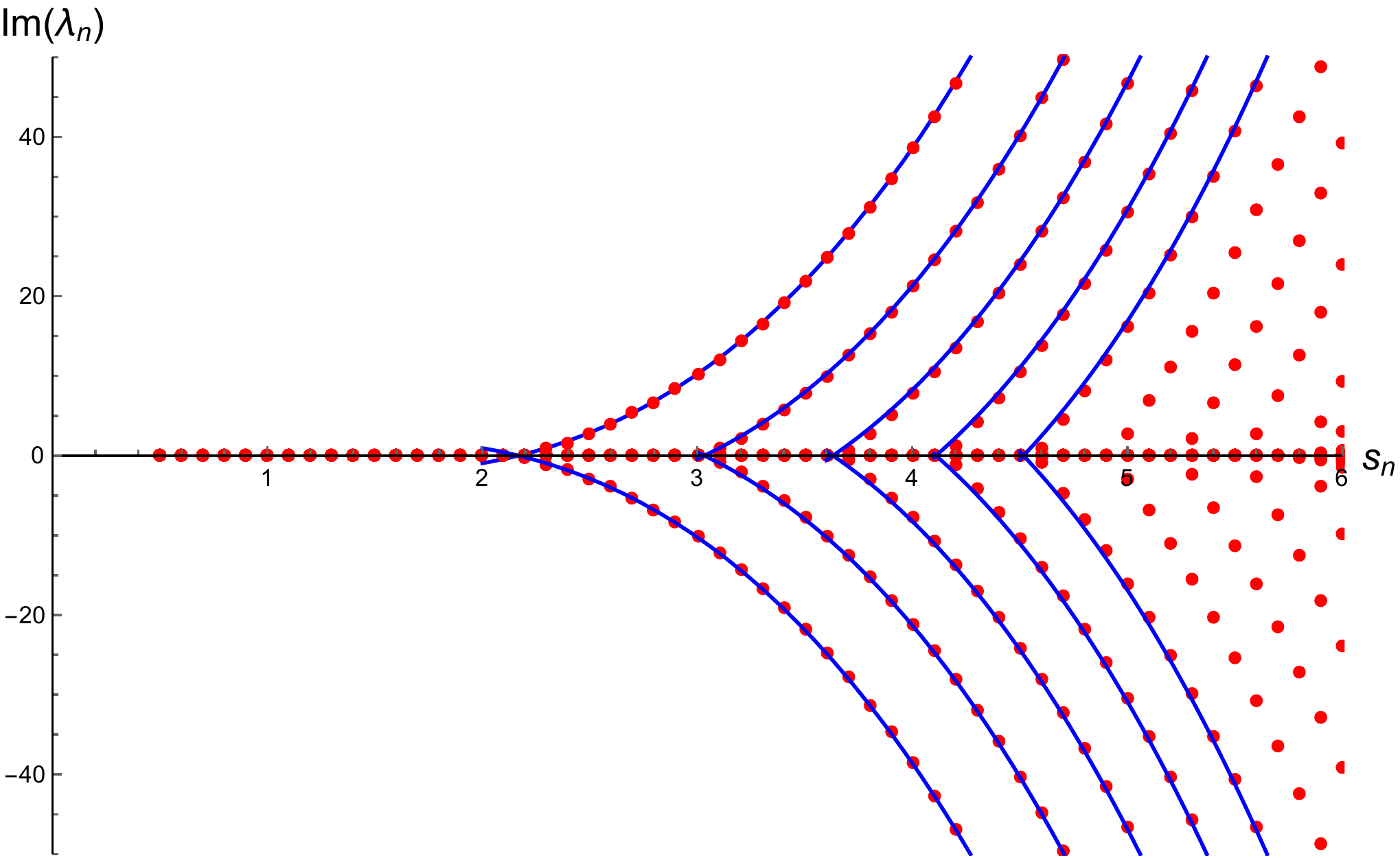}
\caption{$Q(x)=\ii x^3$: Real (left) and imaginary (right) part of the eigenvalues (red) of truncated operators $T_n$, defined on $L^2((-s_n,s_n))$ with $s_n=0.1n$, $n=5,6,\dots,60$. Asymptotic curves (blue) for $\lambda_{k,n}$, $\ov{\lambda_{k,n}}$ with first corrections for $k=1,2,\dots,5$.}
\label{fig:ix3}
\end{figure}
In Figure~\ref{fig:ix3} we plot the asymptotic curves taking into account the first correction 
with $\psi_k(y)=\Ai(\e^\frac{\ii\pi}{6}y+\mu_k)$, see Example~\ref{ex:Airy.cone} with $d=1$ and~Remark~\ref{rem:1st.cor}.
\hfill $\blacksquare$
%
%
\end{example}

\begin{example}[Even imaginary potentials]
	\label{ex:Q.even}
	Let $V: \R \to \R$ be an even and with $V'(x)>0$ for $x>0$ and consider Dirichlet realizations $T_n = \Dti + \ii V$ in $L^2((-s_n,s_n))$ with $s_n \nearrow + \infty$. Theorem~\ref{thm:1D.im} is not directly applicable because of the condition~\eqref{U.left}. Nonetheless, due to the symmetry of $V$, eigenfunctions of $T_n$ satisfy either Dirichlet or Neumann boundary conditions at $0$. Therefore we can split the spectral problem and analyze separately the spectra of
	\begin{equation*}
		\begin{aligned}
			T_n^{\rm DD} &= \Dti + \ii V(x),&&  \Dom(T_n^{\rm DD}) = W^{2,2}((-s_n,0)) \cap W_0^{1,2}((-s_n,0)),
			\\
			T_n^{\rm DN} &= \Dti + \ii V(x),&&  \Dom(T_n^{\rm DN}) = \{f \in W^{2,2}((-s_n,0)) \, : 
			f'(0)=f(-s_n)=0\}.
		\end{aligned}
	\end{equation*}
Introducing $U:= V \chi_{\R_+}$, we obtain that $(T_n^{\rm DD})^* = \Dti + Q$  in $L^2((-s_n,0))$ with $Q(x) = - \ii U(-x)$ as in \eqref{Q.U.def}. 

We assume that this $U$ satisfies Assumption~\ref{asm:U}, possibly with perturbations as in  Example~\ref{ex:Q.odd}, and notice that \eqref{U.left} is satisfied automatically. Then Theorem~\ref{thm:1D.im} yields that the spectra of $T_n^{\rm DD}$ contain asymptotically the eigenvalues
	\begin{equation}\label{la.even.DD}
			\lambda_{k,n}^{\rm DD}=V'(s_n)^\frac23 \Big(\overline{\nu_k}+\rho_{k,n}^{\rm DD} \Big)+\ii V(s_n), \quad n\to\infty.
	\end{equation}

It is not difficult to see that the claim of Theorem~\ref{thm:1D.im} holds also for Neumann boundary conditions at the endpoints as well as for the combinations of Dirichlet and Neumann boundary conditions. Depending on the boundary condition at $0$, the limiting operator is Dirichlet or Neuman imaginary Airy operator in $L^2(\R_+)$, in the Neumann case with eigenvalues $\{\nu_k'\}=\{\e^{\ii (\frac{2\omega}{3}-\pi)} \mu_k'\}$ where $\{\mu_k'\}$ are zeros of $\Ai'$. Thus we obtain that the spectra of $T_n^{\rm ND}$ contain asymptotically the eigenvalues
\begin{equation}\label{la.even.ND}
\lambda_{k,n}^{\rm DN}=V'(s_n)^\frac23 \Big(\overline{\nu_k}+\rho_{k,n}^{\rm ND} \Big)+\ii V(s_n), \quad n\to\infty.
\end{equation}
These two sets of eigenvalues have the same main asymptotic terms, however, the corresponding eigenfunctions of $T_n$ are very different (odd and even).
%
%
%
\hfill $\blacksquare$
\end{example}

\begin{example}[Imaginary exponential potential with non-empty essential spectrum]
	\label{ex:exp}
	Consider the operator $T = \Dti + \ii \e^{x}$ and its truncations $T_n$ to $(-\infty,s_n)$ with $s_n \nearrow + \infty$. Defining $U(x):=\e^{x}$ and $Q(x):=-\ii U(-x)$ as in \eqref{Q.U.def}, we obtain that $T_n^*$ is unitarily equivalent via the reflection $x \mapsto -x$ to $\Dti + Q$ in $L^2((-s_n,\infty))$. This $U$ satisfies Assumption~\ref{asm:U} with $t_n=+\infty$ and $\nu=0$, thus by Theorem~\ref{thm:1D.im}, the spectra of $T_n$ contain asymptotically the eigenvalues
	\begin{equation}\label{ev.iex}
		\lambda_{k,n}= \e^{\frac 23 s_n}  \left(\overline{\nu_k}+\BigO_k\left(\e^{-\frac13 s_n} \right) \right)+\ii \e^{s_n}, \quad n\to \infty.
	\end{equation}

	In fact, since Assumption~\ref{asm:U} is satisfied also with $t_n=s_n$, the eigenvalues \eqref{ev.iex}, with possibly different remainders, are asymptotically contained in the spectra of operators $T_n = \Dti + \ii \e^{x}$ subject to Dirichlet boundary conditions in $L^2((-s_n,s_n))$; spectra of these are illustrated in Figure~\ref{fig:iex}.
	\begin{figure}[htb!]
		\includegraphics[width=0.47\textwidth]{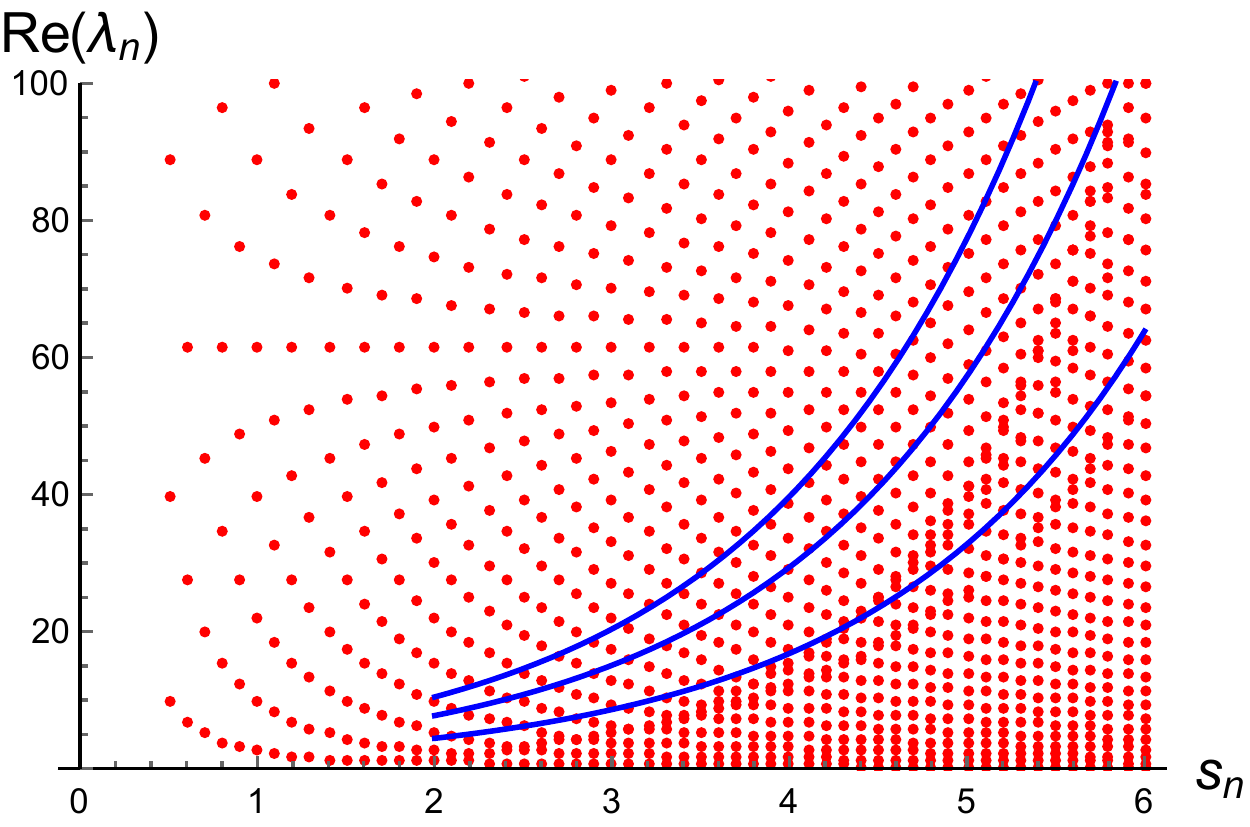}
		\hfill
		\includegraphics[width=0.47 \textwidth]{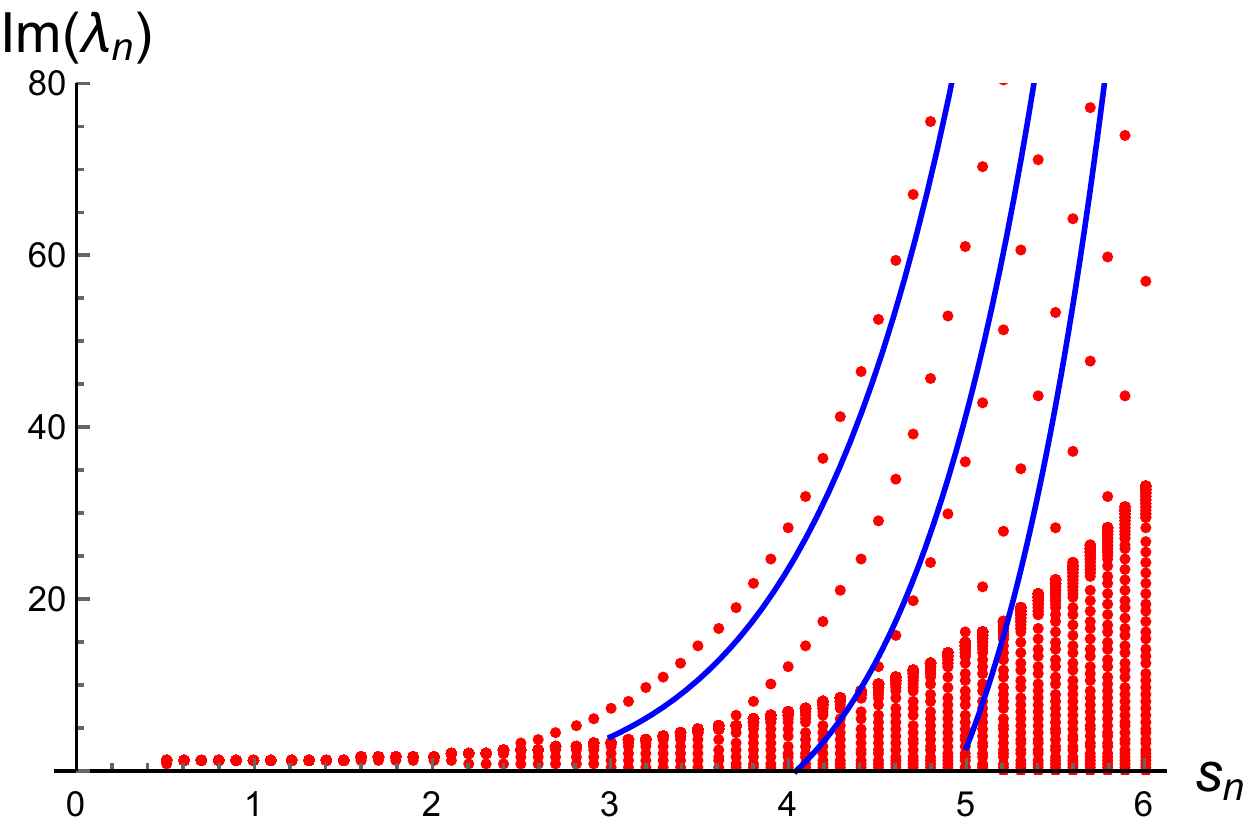}
		\caption{$U(x)=\ii \e^{x}$: Real (left) and imaginary (right) part of the eigenvalues (red) of truncated operators $T_n$, defined on $L^2((-s_n,s_n))$ with $s_n=0.1n$, $n=5,6,\dots,60$. Asymptotic curves (blue) for $\lambda_{k,n}$ with first correction for $k=1,2,3$. } 
		\label{fig:iex}
	\end{figure}
	\hfill $\blacksquare$
\end{example}

\subsection{Radially symmetric potentials on annuli}
\label{ssec:Q.radial}

Consider the exterior domain $\Omega = \R^d \setminus \ov{B_1(0)}$, a radial potential $V : \Omega \to \C$ satisfying Assumption~\ref{asm:Q} (with $Q$ replaced by $V$) and the Dirichlet realization of $T = -\Delta + V$ in $L^2(\Omega)$. Consider also the truncated  operators $T_n = -\Delta +V$ in $L^2(\Omega_n)$ with $\Omega_n = \Omega \cap B_{s_n}(0)$ and $s_n \nearrow +\infty$, subject to Dirichlet boundary conditions both on $\partial B_1(0)$ and $\partial B_{s_n}(0)$. Truncations of a specific problem of this type were originally considered in \cite[Sec.~3.1]{Brown-2004-24} and it was shown in \cite[Sec.~6]{Boegli-2017-42} that such domain truncation is spectrally exact, see also Theorem~\ref{thm:dom.tr}. Our aim here is to investigate the diverging eigenvalues.

We transform $T_n$ in spherical coordinates with $r\in (1,s_n)$, $\Theta \in \cS^{d-1}$, employ the usual unitary transform in the radial part (see \eg~\cite[Chap.~18]{Weidmann-2003})
\begin{equation}
L^2(\R_+;r^{d-1}\dd r) \to L^2(\R_+,\dd r) \, :\, h(r) \mapsto r^{(d-1)/2} h(r), 	
\end{equation}
and use the spherical harmonics $\{Y_{l,j}\}_{j=1}^{N(l,d)}$, $l\in\N_0$, $N(l,d)=\frac{(2l+d-2)(l+d-3)!}{l!(d-2)!}$ in $d-1$ dimensions, which satisfy
$
-\Delta_{\cS^{d-1}}Y_{l,j}(\Theta)=l(l+d-2)Y_{l,j}(\Theta).	
$
%
Thereby we obtain a decomposition of $T_n$ to one dimensional operators
\begin{equation}	\label{Tnl.def}
		T_{n,l}:=-\partial_r^2+\ii U(r)+U_1(r),
		\quad
		\Dom(T_{n,l}):=W^{2,2}((1,s_n))\cap W^{1,2}_0((1,s_n)),
\end{equation}
where $U(r) = V(x)$ for $|x|=r$ and
\begin{equation}
U_1(r)=\frac{(d-1)(d-3)+ 4l(l+d-2)}{4 r^2}.
\end{equation}
Similarly as in Examples~\ref{ex:Q.even}, \ref{ex:exp}, $T_{n,l}^*$ is unitarily equivalent via the reflection $r \mapsto -r$ to $-\partial_r^2 + Q$ in $L^2((-s_n,-1))$ with $Q(r) = - \ii U(-r) - U_1(-r)$. 

We suppose that $U \chi_{[1,+\infty]}$ satisfies Assumption \ref{asm:U} (with perturbations as in Example~\ref{ex:Q.odd}) and note that $U_1$ satisfies conditions of Proposition~\ref{prop:pert}.
Then Theorem~\ref{thm:1D.im}, Proposition~\ref{prop:pert} and Theorem~\ref{thm:div} yield that the spectra of $T_{n,l}$ in \eqref{Tnl.def} contain asymptotically the eigenvalues
\begin{equation}
\lambda_{k,n,l}=U'(s_n)^\frac23 \left(\overline{\nu_k}+\rho_{k,n,l}\right)+\ii U(s_n) - U_1(s_n),\quad n\to\infty.
\label{la.rad}
\end{equation}

In particular for $V(x)=\ii |x|^2$ with $x \in \R^d \setminus \ov{B_1(0)}$ we obtain from \eqref{la.rad} that the spectral of the one dimensional operators $T_{n,l}$, see~\eqref{Tnl.def}, contain asymptotically the eigenvalues 
	\begin{equation}\label{la.knl.rad}
		\la_{k,n,l}=(2s_n)^\frac23 \left(\overline{\nu_k}+ \BigO_{k,l}\left(s_n^{-\frac{4}{3}}\right) \right)+\ii s_n^2,\quad n\to\infty;
	\end{equation}
Figures~\ref{fig:ix2.rad} and \ref{fig:ix2.intro} illustrate this result. 

	\begin{figure}[htb!]
	\includegraphics[width=0.47\textwidth]{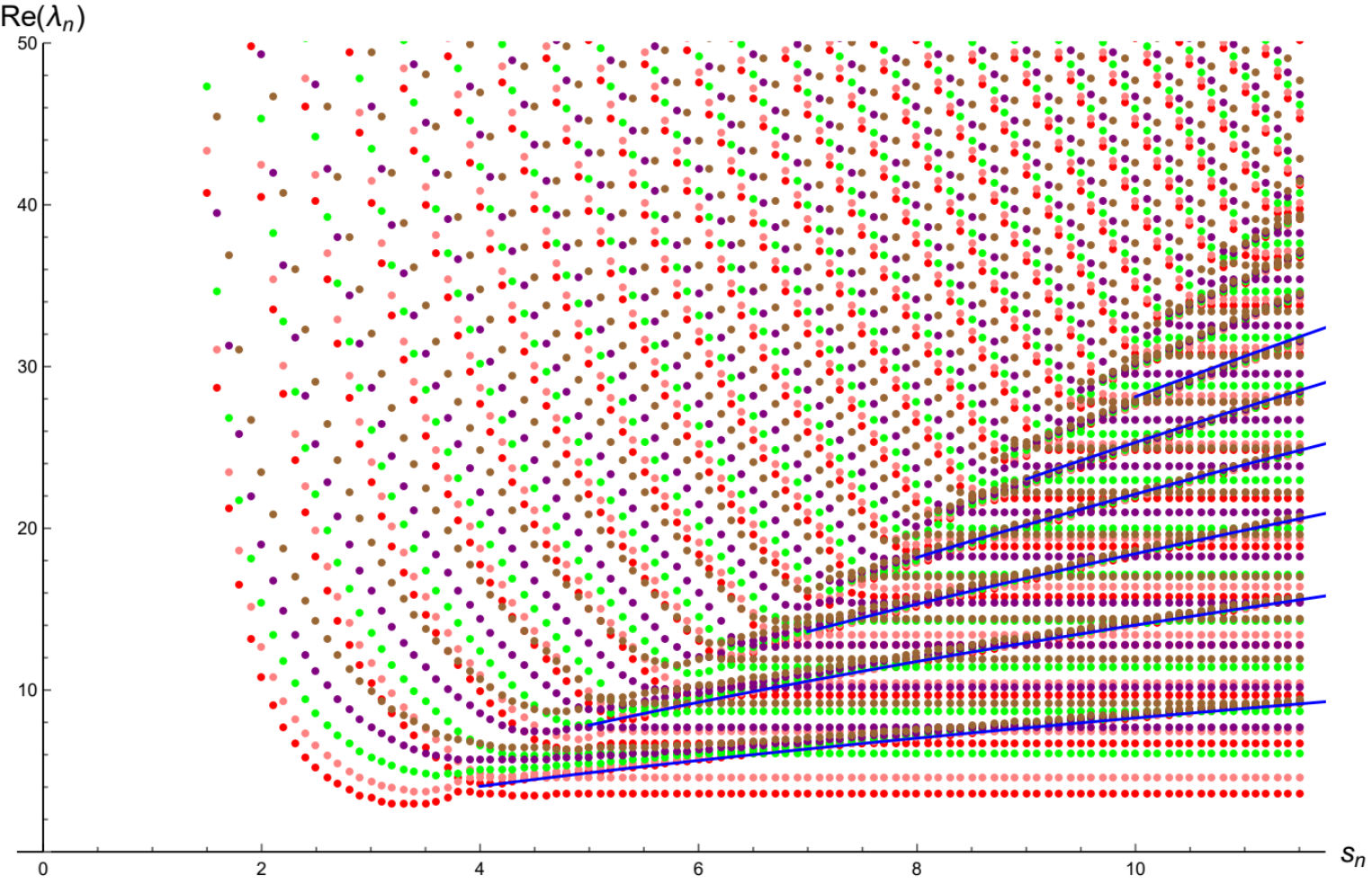}
	\hfill
	\includegraphics[width=0.47\textwidth]{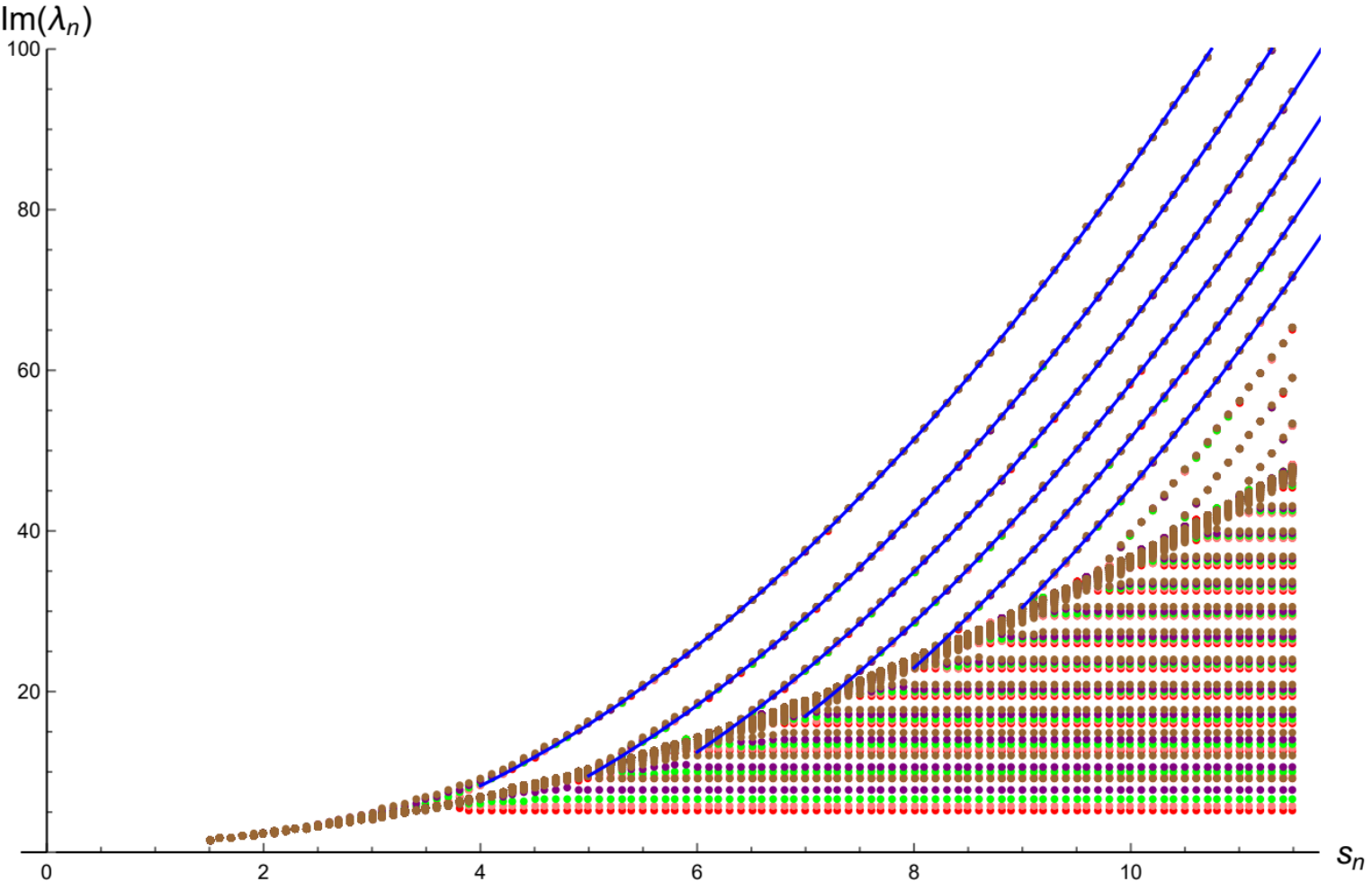}
	\caption{$V(x)=\ii |x|^2$: Real (left) and imaginary (right) part of the eigenvalues of truncated operators $T_{n,l}$ with $d=3$ and $l=1,\dots,5$ (red, pink, green, purple, brown), defined on $L^2((1,s_n))$ with $s_n=0.1n$, $n=15,16,\dots,115$. Asymptotic curves (blue) for $\lambda_{k,n}$ with the first correction for $k=1,2,\dots,6$.}
	\label{fig:ix2.rad}
\end{figure}
	%
%

\subsection{Two dimensional rotated squares and polynomial potential}

Finally we show that Theorem~\ref{thm:div} can be applied directly in more dimensional problems. The verification of the assumptions is analogous to the steps in proof of Theorem~\ref{thm:1D.im} in the one dimensional case.

\begin{example}\label{eq:Q.2d}
We consider the potential 
\begin{equation}\label{Q.ex.2D}
Q(x_1,x_2)=\ii(x_1^3+x_2^4)+x_1^2x_2^2, \quad x:=(x_1,x_2) \in \R^2
\end{equation}
and a sequence of domains $\{\Omega_n\} \subset \R^2$, which are expanding squares rotated by $\pi/4$ with the left-most corner at $(-s_n,0)$, \ie~with some $\{s_n\}$ with $s_n \nearrow +\infty$,
\begin{equation}\label{Omega.n.def}
\begin{aligned}
\Omega_n = \{  x \in \R^2 \, : \, & (x_1 \in (-s_n,0] \text{ and } |x_2|<x_1+s_n) \text{ or }
\\
& 
(x_1 \in (0,s_n) \text{ and } |x_2|<(s_n-x_1))\}. 
\end{aligned}
\end{equation}

Using Theorems~\ref{thm:dom.tr} and \ref{thm:div} we explain below that the Dirichlet truncations of $T = -\Delta + Q$ in $L^2(\R^2)$ to $T_n = -\Delta + Q$ in $L^2(\Omega_n)$, $n \in \N$, are spectrally exact and the spectra of $T_n$ contain asymptotically the eigenvalues 
\begin{equation}\label{Q.ex.la}
	\la_{k,n}^{(j)}= (3s_n^2)^\frac23 \Big(\nu_k + \rho_{k,n}^{(j)} \Big) - \ii s_n^3, \quad n \to \infty, 
\end{equation}
where $\{\nu_k\}$ are eigenvalues of the complex Airy operator $\Sa$ with $\Gamma = \Gamma_{\pi/4}$, $\omega = \pi/2$ and $\vartheta = (1,0)$, see Example~\ref{ex:Airy.cone}, and 
\begin{equation}
\frac{1}{m_a(\nu_k)} \left|\sum_{j=1}^{m_a(\nu_k)} \rho_{k,n}^{(j)}  \right| = \BigO(s_n^{-\frac23}), \quad n \to \infty.
\end{equation}

Clearly $Q \in C^1(\R^d)$ satisfies Assumption~\ref{asm:Q} on each $\Omega_n$, $n\in\N$, and 
\begin{equation}\label{Q.ex.def}
\nabla Q(x_1,x_2) = (2x_1x_2^2+ 3 \ii x_1^2, 2 x_1^2 x_2 + 4 \ii x_2^3), \ \ |\nabla Q(-p_n)| = 3 s_n^2, \  p_n=(s_n,0).
\end{equation}

We first check that $Q$ satisfies Assumption~\ref{asm:Q} and \eqref{Q.unbd} on $\R^2$, so the truncations on $\{\Omega_n\}$ are spectrally exact by Theorem~\ref{thm:dom.tr}. Indeed, \eqref{Q.nab.as} holds since
\begin{equation}
\begin{aligned}
|x_2| \leq \tfrac 12 |x_1|^{\frac 43}, |x_1| \gs 1: &\quad |Q(x)| \gs |x_1|^3,&  |\nabla Q(x)| &\ls x_1^4,
\\
|x_2| \geq \tfrac 32 |x_1|^{\frac 43}, |x_2| \gs 1: & \quad |Q(x)| \gs x_2^4,&  |\nabla Q(x)| &\ls |x_2|^3,
\\
|x_2| \in (\tfrac 12 |x_1|^{\frac 43},  \tfrac 32 |x_1|^{\frac 43}), |x_1| \gs 1:& \quad |Q(x)| \gs |x_1|^{\frac{14}{3}},&  |\nabla Q(x)| &\ls |x_1|^4.
\end{aligned}
\end{equation}

To apply Theorem~\ref{thm:div} we check conditions in Assumption~\ref{asm:div}. The conditions~\ref{asm:div.sn} and \ref{asm:div.Sign} are satisfied with $\Gamma = \Gamma_{\pi/4}$ and $r_n \approx s_n^{5/3}$. Moreover, we have $\omega = \pi/2$ and $\vartheta = (0,1)$, see \eqref{omega.lim}.
To verify the condition \ref{asm:div.Qn.sep}, we employ formulas \eqref{Qn.form1}, \eqref{Qn.form2} and proceed similarly as in the one dimensional case (see the proof of Theorem~\ref{thm:1D.im}). 

Using the variable $\sigma_n x = y$, if $(y_1,y_2) \in p_n + \Omega_n$ with $0 < y_1 <1$, then (since $|y_2|\leq y_1$)
\begin{equation}
|(\nabla Q_n)(x)| = \frac{|\nabla Q(y-p_n)|}{|\nabla Q(-p_n)|}  \ls \frac{s_n^2}{s_n^2} = 1.	
\end{equation}
Next, for $(y_1,y_2) \in p_n + \Omega_n$ with $y_1 \in [1,s_n]$, by the elementary identity for  $s_n^3-(s_n-x)^3$, we obtain
\begin{equation}
|Q(y_1-s_n,y_2)-Q(-s_n,0)| \geq  y_1 (s_n^2 + s_n (s_n-y_1) + (s_n-y_1)^2) \geq y_1 s_n^2. 
\end{equation}
Thus, (recall $|y_2| \leq y_1$)
\begin{equation*}
\frac{|(\nabla Q_n)(x)|}{|Q_n(x)|^\frac32} 
= 
\frac{|(\nabla Q)(y_1-s_n,y_2)|}{|Q(y_1-s_n,y_2)-Q(-s_n,0)|^\frac32}
\ls \frac{s_n y_1^2 + s_n^2 + s_n^2 y_1 + y_1^3}{y_1^\frac 32 s_n^3} \ls \frac{1}{s_n}. 
\end{equation*}
Finally, for $(y_1,y_2) \in p_n + \Omega_n$ with $y_1 \in (s_n,2 s_n)$, we have
\begin{equation}
|Q(y_1-s_n,y_2)-Q(-s_n,0)| \gs s_n^3 	
\end{equation}
and so (again since $|y_2| \leq y_1$)
\begin{equation}
\frac{|(\nabla Q_n)(x)|}{|Q_n(x)|^\frac32} \ls \frac{s_n^3}{s_n^\frac 92} = \frac{1}{s_n^\frac 32}. 
\end{equation}
Finally, using \eqref{Qn.form2}
\begin{equation}
\iota_n \ls \frac{|(s_n-y_1)^2y_2^2 + \ii (y_2^4-3 s_n y_1^2 +y_1^3)|}{s_n^\frac23(s_n^3-(s_n-y_1)^3) y_1}, \quad (y_1,y_2) \in p_n + \Omega_n.
\end{equation}
For $0 < y_1 <s_n$, by the identity for  $s_n^3-(s_n-y_1)^3$ and $|y_2|/y_1 \leq 1$, we obtain
\begin{equation}
\frac{|(s_n-y_1)^2y_2^2 + \ii (y_2^4-3 s_n y_1^2 +y_1^3)|}{s_n^\frac23(s_n^3-(s_n-y_1)^3) y_1}
\ls \frac{s_n^2}{s_n^{\frac23 + 2 }} = \frac{1}{s_n^\frac23}
\end{equation}
and for $s_n \leq  y_1 <2s_n$, we arrive at
\begin{equation}
	\frac{|(s_n-y_1)^2y_2^2 + \ii (y_2^4-3 s_n y_1^2 +y_1^3)|}{s_n^\frac23(s_n^3-(s_n-y_1)^3) y_1}
	\ls \frac{s_n^4}{s_n^{\frac23 + 4 }} = \frac{1}{s_n^\frac23}.
\end{equation}
Thus $\iota_n = \BigO(s_n^{-\frac 23})$, $n \to \infty$.
\hfill $\blacksquare$
\end{example}

\section{Remarks on strong coupling}
\label{sec:strong.c}

We consider a family of Dirichlet realizations of 
\begin{equation}\label{Tg.def}
	T_g = -\Delta + Q_1 + \ii g Q_2, \qquad g>0,
\end{equation}
in $L^2(\Omega)$ where $\Omega$ is open, functions $Q_i$, $i=1,2$, are real valued and $g \to +\infty$. 

Operators with this structure arise in several contexts, in particular, in enhanced dissipation, 
see Example~\ref{ex:enh.dis}, in $\mathcal{PT}$-symmetric phase transitions, see Examples~\ref{ex:BM} and \ref{ex:CG}, or when $Q_1=0$ as semi-classical problems with purely imaginary potentials, see \eg~\cite{Henry-2014,Almog-2016-48,Almog-2018-59}, in particular in the context of Bloch-Torrey equation.

We focus here on the case when $\Omega$ is (typically) unbounded and $|Q_2|$ has a global minimum inside of $\Omega$, see Assumption~\ref{asm:g} for details. As an application of Corollary~\ref{cor:n}, we describe some of the diverging eigenvalues as $g \to +\infty$. In Section~\ref{ssec:ex.g} we show how Theorem~\ref{thm:g} can be implemented and indicate its possible further extensions.

\begin{asm-sec}\label{asm:g}
Let $\Omega \subset \Rd$ be open with $0 \in \Omega$, let $\ov{B_R(0)} \subset \Omega$ for some $R>0$ and let $Q_1 \in C^1(\ov \Omega; \R)$ with $Q_1 \geq 0$, $Q_2 \in C^1(\ov \Omega \setminus \{0\}; \R)$. Suppose further that
\begin{enumerate}[\upshape i), wide]
\item \label{asm:g.sep} for some $\eps >0$, the condition \eqref{Q.asm.sep} is satisfied with $\Omega$ replaced by $\Omega \setminus \ov{B_\eps(0)}$ and $Q$ replaced separately by $Q_1$ and by $Q_2$; 	
\item $Q_2(0)=0$ and $|Q_2|$ attains the global minimum at $0$, \ie~for every $\delta>0$
\begin{equation}\label{Q2.inf}
	\inf_{x \in \Omega \setminus \ov{B_\delta(0)}} |Q_2(x)| >0;
\end{equation}
\item there exists $Q_\infty \in C(\Rd) \cap  C^1(\Rd \setminus \{0\};\R)$ with $\min_{|x|=1} |Q_{\infty}(x)|>0$ such that for some $\kappa >0$
\begin{equation}\label{Q.inf.g}
\begin{aligned}
Q_{\infty}(tx) &= t^\kappa Q_{\infty}(x),  && x \in \Rd, \ t>0, 
\\
Q_2(x) - Q_\infty(x) &= |x|^\kappa h_0(x), && h_0(x) = o(1), \quad |x| \to 0,
\\
|\nabla Q_2(x) - \nabla Q_\infty(x)| &= |x|^{\kappa-1} h_1(x), && h_1(x) = o(1), \quad |x| \to 0,
\end{aligned}
\end{equation}
and the discrete spectrum of $S_{\infty}:= -\Delta + \ii Q_{\infty}$ in $L^2(\Rd)$ is non-empty.
\hfill $\blacksquare$
\end{enumerate}
\end{asm-sec}

\begin{example}\label{ex:H_kappa}
Typical examples of $S_\infty$ in Assumption~\ref{asm:g} in one dimension are 
\begin{equation}\label{H.kappa.def}
\Dti + \ii x^n, \quad n \in \N \setminus\{1\}, \qquad	\Dti +\ii|x|^\kappa, \quad \kappa >0.
\end{equation}
The spectra of the former for $n={2k+1}$, $k \in \N$, are real, see \cite{Shin-2002-229}, and the spectra of the remaining operators with even potential can be obtained by complex scaling (after possibly reducing the problem to Dirichlet/Neumann operators in $L^2(\R_+)$). A typical case in more dimensions is an imaginary oscillator with potential $\ii \langle A x,x \rangle_{\Rd}$ and a positive definite matrix $A$.
\hfill $\blacksquare$
\end{example}

\begin{theorem}\label{thm:g}
Let Assumption~\ref{asm:g} be satisfied and let $T_g$, $g>0$, be as in \eqref{Tg.def}. Then the spectra of $T_g$ contain asymptotically the eigenvalues (with $k \in \N$ and $j \in \{1,\dots,m_a(\nu_k)\}$)

\begin{equation}\label{la.g}
	\la_{k,g}^{(j)}=g^\frac{2}{2+\kappa} \left(\nu_k+\rho_{k,g}^{(j)}\right),\quad g\to+\infty,
\end{equation}
where $\{\nu_k\} = \spd(S_\infty)$ and, as $g \to +\infty$,  $\rho_{k,g}^{(j)} = o_{j,k}(1)$ and for any $\beta \in (0,1)$,
\begin{equation}\label{rkn.est.g}
	\frac1{m_a(\nu_k)} \left| \sum_{j=1}^{m_a(\nu_k)} \rho_{k,g}^{(j)}\right| = \BigO_{k} \left(
	g^{-\frac{\min\{2,\kappa(1-\beta)\} }{2+\kappa}} + \sup_{|y|\leq g^{- \frac{\beta}{2+\kappa}}} |h_0(y)|
	\right).
\end{equation}

\end{theorem}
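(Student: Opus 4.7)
The plan is to reduce $T_g$ to a scaled operator that, after an appropriate unitary dilation, converges in the sense of Corollary~\ref{cor:n} to the natural candidate $S_\infty = -\Delta + \ii Q_\infty$ on $\LRd$. Introducing the length scale $\sigma_g := g^{-1/(2+\kappa)}$ and the unitary $(U_g f)(y) := \sigma_g^{d/2} f(\sigma_g y)$ from $L^2(\Omega)$ to $L^2(\Omega_g)$ with $\Omega_g := \sigma_g^{-1} \Omega$, one obtains
\begin{equation*}
\sigma_g^2\, U_g T_g U_g^{-1} = \tilde T_g := -\Delta + \tilde Q_g \quad \text{in } L^2(\Omega_g),
\end{equation*}
where $\tilde Q_g(y) = g^{-2/(2+\kappa)} Q_1(\sigma_g y) + \ii g^{\kappa/(2+\kappa)} Q_2(\sigma_g y)$. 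The homogeneity $g^{\kappa/(2+\kappa)} Q_\infty(\sigma_g y) = Q_\infty(y)$ singles out $S_\infty$ as the limit, and $\Omega_g \nearrow \Rd$ because $\ov{B_R(0)} \subset \Omega$. Once Corollary~\ref{cor:n}.iii)--iv) is applied to $\{\tilde T_g\} \to S_\infty$, the statement \eqref{la.g} follows from spectral mapping $\lambda_{k,g}^{(j)} = g^{2/(2+\kappa)} \tilde\mu_{k,g}^{(j)}$.

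To verify Assumption~\ref{asm:n}, fix $\beta \in (0,1)$ and take $\xi_g$ to be a mollification of $\chi_{B_{r_g}(0)}$ with $r_g := g^{(1-\beta)/(2+\kappa)}$; the uniform bound \eqref{xin.nabla.unif} holds by rescaling a single mollifier of $\chi_{B_1(0)}$, and the cut-off conditions \eqref{xi.tt0} are met for $g$ large since $\supp \xi_g \subset B_{r_g+1}(0) \subset \Omega_g$. On $\supp \xi_g$ one has $|x| = \sigma_g |y| \leq g^{-\beta/(2+\kappa)}$, and \eqref{Q.inf.g} yields
\begin{equation*}
\tilde Q_g(y) - \ii Q_\infty(y) = g^{-\frac{2}{2+\kappa}} Q_1(\sigma_g y) + \ii |y|^\kappa h_0(\sigma_g y).
\end{equation*}
Because $\min_{|\theta|=1}|Q_\infty(\theta)| > 0$ together with $\kappa$-homogeneity gives $|\ii Q_\infty(y) + 1| \gs 1 + |y|^\kappa$, which absorbs the $|y|^\kappa$ factor, and $Q_1$ is bounded near $0$, one obtains
\begin{equation*}
\left\| \xi_g \frac{\tilde Q_g - \ii Q_\infty}{(\tilde Q_g + 1)(\ii Q_\infty + 1)} \right\|_\infty \ls g^{-\frac{2}{2+\kappa}} + \sup_{|x| \leq g^{-\beta/(2+\kappa)}} |h_0(x)|.
\end{equation*}
The two tail terms in \eqref{tau.n.def} contribute $\ls r_g^{-\kappa} = g^{-\kappa(1-\beta)/(2+\kappa)}$, using $|Q_\infty(y)|, |\tilde Q_g(y)| \gs |y|^\kappa$ on $\{|y| \geq r_g\}$. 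Thus $\tau_g = \BigO(g^{-\min\{2,\kappa(1-\beta)\}/(2+\kappa)} + \sup_{|x| \leq g^{-\beta/(2+\kappa)}} |h_0(x)|)$; the analogous $L^2$-estimate controls $\kappa_g$ since the tail $\|\zeta_g \phi\|$ is super-exponentially small by the decay of the eigenfunctions of $S_\infty$ (Theorem~\ref{thm:EF.dec}, Example~\ref{ex:Airy.cone}).

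The main technical obstacle is the uniform verification of the graph-norm separation \eqref{Qn.sep} for $\tilde Q_g$. Away from $y = 0$ one has $|\tilde Q_g(y)| \gs |y|^\kappa$, and the hypotheses on $\nabla Q_1, \nabla Q_2$ on $\Omega \setminus \ov{B_\eps(0)}$ translate under $x = \sigma_g y$ into the same form uniformly in $g$, while any $M_\nabla$-contribution is damped by the prefactors $g^{-3/(2+\kappa)}$ and $g^{(\kappa-1)/(2+\kappa)}$ combined with the lower bound $|\tilde Q_g|^{3/2} \gs |y|^{3\kappa/2}$. In a fixed $y$-neighbourhood of the origin, however, $\nabla Q_\infty$ may blow up when $\kappa < 1$, so this region is handled separately: the excised ball from Assumption~\ref{asm:g}.i) together with the $C^1$-regularity of $Q_2$ on its complement lets one absorb local $L^\infty$-control of $\nabla \tilde Q_g$ into the Laplacian via Theorem~\ref{thm:T.basic}. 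Balancing the three error sources (the vanishing $Q_1$-prefactor, the tail $r_g^{-\kappa}$, and the near-origin $h_0$-error) produces the optimal rate recorded in \eqref{rkn.est.g}, and the eigenvalue inclusion \eqref{la.g} follows from Corollary~\ref{cor:n}.iii) combined with the spectral mapping above.
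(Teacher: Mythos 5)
Your overall strategy is exactly the paper's: the scaling $\sigma_g=g^{-1/(2+\kappa)}$ determined by $g\sigma^{2+\kappa}=1$, the limit operator $S_\infty=-\Delta+\ii Q_\infty$, Corollary~\ref{cor:n} plus spectral mapping, and the observation that the eigenfunction-decay term in $\kappa_g$ is negligible. The only structural difference is where $\beta$ enters: the paper keeps the cut-off at the full radius $\sim\sigma_g^{-1}R$ and splits the estimate of the potential difference into the regions $|\sigma x|\le\sigma^\beta$, $\sigma^\beta\le|\sigma x|\le\delta$, $\delta\le|\sigma x|\le R$, whereas you shrink the cut-off itself to $r_g=g^{(1-\beta)/(2+\kappa)}$ and let the tail terms produce $r_g^{-\kappa}=g^{-\kappa(1-\beta)/(2+\kappa)}$. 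Both yield the same $\tau_g$, so this variation is harmless (modulo the minor imprecision that $|\tilde Q_g(y)|\gs|y|^\kappa$ fails for $|y|\gg\sigma_g^{-1}$ when $\Omega$ is unbounded; there you only get $|\tilde Q_g|\gs g^{\kappa/(2+\kappa)}\ge r_g^\kappa$ from \eqref{Q2.inf}, which is all you need).

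The genuine gap is in your verification of the uniform separation condition \eqref{Qn.sep} near the origin. Your proposed mechanism --- absorbing ``local $L^\infty$-control of $\nabla\tilde Q_g$'' into the Laplacian --- is not available: by \eqref{Q.inf.g} and homogeneity, $|\nabla\tilde Q_g(y)|\approx|y|^{\kappa-1}$ on a fixed neighbourhood of $y=0$, which blows up for $\kappa<1$ (and $\tilde Q_g$ need not even lie in $W^{1,\infty}_{\rm loc}$ there, since $Q_2$ is only $C^1$ away from the origin); moreover the ratio $|\nabla\tilde Q_g|/|\tilde Q_g|^{3/2}\approx|y|^{-1-\kappa/2}$ diverges as $y\to0$ for \emph{every} $\kappa>0$, so \eqref{Qn.sep} simply cannot hold near $0$ and no gradient bound will rescue it. The point you are missing is that the potential itself, not its gradient, is uniformly bounded there: $|\tilde Q_g(y)|\ls\sigma_g^2+\sigma_g^{2+\kappa}g\left(|Q_\infty(y)|+|y|^\kappa|h_0(\sigma_g y)|\right)\ls1$ for $|y|\le2\alpha$ with $\alpha$ fixed. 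The paper therefore splits $\tilde Q_g=\eta\tilde Q_g+(1-\eta)\tilde Q_g$ with $\eta\in C_0^\infty(B_{2\alpha}(0))$, treats $\eta\tilde Q_g$ as a uniformly bounded perturbation (admissible by the remark following Assumption~\ref{asm:Q} and the graph-norm estimate \eqref{T.norm.est.lem}), and verifies \eqref{Qn.sep} only for $(1-\eta)\tilde Q_g$ on $|y|>\alpha$, choosing $\alpha$ large to make the resulting $\eps_\nabla$-type constant small. Without this splitting your verification of Assumption~\ref{asm:n}.\ref{asm:n.Qn} does not go through.
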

\begin{proof}
We select $\sigma = \sigma(g)$ to satisfy
\begin{equation}\label{sig.g.def}
	g \sigma^{2+\kappa} =1, \quad g>1
\end{equation}
so $\sigma \to 0$ as $g \to +\infty$. By scaling $x \mapsto \sigma x$, we obtain operators in $L^2(\sigma^{-1} \Omega)$
\begin{equation}
	\frac{1}{\sigma^2} \left( - \Delta + \sigma^2\Big(Q_1(\sigma x) + \ii g Q_2(\sigma x) \Big) \right ) =: \frac{1}{\sigma^2} S_g,
\end{equation}
which are unitarily equivalent to $T_g$. In the following we apply Corollary~\ref{cor:n} to the operators $S_g$ and $S_\infty = -\Delta + \ii Q_{\infty}$ in $L^2(\Rd)$.

We define 
\begin{equation}
	Q_\sigma(x) := \sigma^2\Big(Q_1(\sigma x) + \ii g Q_2(\sigma x) \Big), \quad  x \in \sigma^{-1} \Omega
\end{equation}
and verify conditions in Assumption~\ref{asm:n}.

\ref{asm:n.dom} From the scaling we have $ B_{\sigma^{-1}R} (0) \subset \sigma^{-1} \Omega $ and so $\sigma^{-1} \Omega$ exhaust $\Omega_\infty :=\Rd$. 

\ref{asm:n.Qn} We first split $Q_\sigma$ as $Q_\sigma = \eta Q_\sigma + (1-\eta) Q_\sigma$ where $\eta \in C_0^\infty(B_{2\alpha}(0))$ with $\eta = 1$ on $B_\alpha(0)$ and where sufficiently large $\alpha>0$, independent of $g$, will be fixed later. We show that $\eta Q_\sigma$ is uniformly bounded (and so can be treated as a perturbation, see remarks after Assumption~\ref{asm:Q}) and $(1-\eta) Q_\sigma$ satisfies \eqref{Qn.sep}. 

For $|x| \leq 2 \alpha$, using \eqref{Q.inf.g} and \eqref{sig.g.def} we have
\begin{equation}
	|Q_\sigma(x)| \ls \sigma^2 + \sigma^{2+\kappa} g \left( |Q_\infty(x)| +  |x|^\kappa h_0(\sigma x)\right) \ls 1.
\end{equation}
Since $\nabla (1-\eta) Q_\sigma = -(\nabla \eta) Q_\sigma + (1-\eta) \nabla Q_\sigma$
and $\supp \eta \subset B_{2\alpha}(0)$, it suffices to further analyze $|\nabla Q_\sigma(x)|$ for $|x|>\alpha$. Namely, we estimate
\begin{equation}
	\frac{|\nabla Q_\sigma(x)|}{|Q_\sigma(x)|^\frac32} =  \frac{|(\nabla Q_1)(\sigma x) + \ii g (\nabla Q_2)(\sigma x)|}{|Q_1(\sigma x) + \ii g Q_2(\sigma x)|^\frac32}.
\end{equation}

At first we focus on the region $\alpha < |x| \leq \delta \sigma^{-1}$ with a sufficiently small $\delta >0$ which will be selected later. Using \eqref{Q.inf.g} and homogeneity of $Q_\infty$, we obtain
\begin{equation}
|(\nabla Q_2)(\sigma x)| \leq |(\nabla Q_\infty)(\sigma x)| 
\left(
1 + \frac{|h_1(\sigma x)|}{\displaystyle \min_{|z|=1} |\nabla Q_\infty(z)|}
\right)
\ls |(\nabla Q_\infty)(\sigma x)|;
\end{equation}
in the last step we use Euler's homogeneous function theorem, the homegeneity of $Q_\infty$ and that the assumption $\min_{|z|=1} |Q_\infty(z)| >0$ implies that $\min_{|z|=1} |\nabla Q_\infty(z)| >0$ as well. Similarly, 
\begin{equation}\label{Q2.le}
	|Q_2(\sigma x)| \geq |Q_\infty(\sigma x)| 
	\left(
	1 - \frac{|h_0(\sigma x)|}{\displaystyle \min_{|z|=1} |Q_\infty(z)|}
	\right),
\end{equation}
thus, if $\delta>0$ is sufficiently small
\begin{equation}
	|Q_2(\sigma x)| \gs |Q_\infty(\sigma x)|, \quad \alpha < |x| < \delta \sigma^{-1}. 
\end{equation}
Hence, writing $x = t z$ with $|z|=1$, $t>\alpha$ and using \eqref{sig.g.def}, we arrive at
\begin{equation}
\frac{|\nabla Q_\sigma(x)|}{|Q_\sigma(x)|^\frac32} \ls \frac{1 + g (\sigma t)^{\kappa-1} |\nabla Q_\infty(z)|}{(g\sigma^{\kappa} t^{\kappa})^\frac32 |Q_\infty(z)|^\frac32}
\ls  
\frac{\sigma^3}{\alpha^\frac{3\kappa}{2}} + \frac{1}{\alpha^{\frac \kappa2 +1}}, \qquad \alpha < |x| < \delta \sigma^{-1}
\end{equation}
and so we can select $\alpha>0$ so that the right hand side is sufficiently small for all $x$ in the considered region.

Next let $\delta \sigma^{-1} \leq |x| \leq \eps \sigma^{-1}$ with $\eps>0$ from the assumption. Then, from \eqref{Q2.inf}, 
\begin{equation}
\frac{|\nabla Q_\sigma(x)|}{|Q_\sigma(x)|^\frac32} 
\ls \frac{1 + g}{g^\frac 32 |Q_2(\sigma x)|^\frac32}
\ls  \frac{1}{g^\frac 12}.
\end{equation}
%

Finally, let $\eps \sigma^{-1} \leq |x|$. Here we use \eqref{Q2.inf} and the $Q_1$ and $Q_2$ satisfy separately  \eqref{Q.asm.sep} outside $B_\eps(0)$. Thus writing $y = \sigma x$, we get
\begin{equation*}
\frac{|(\nabla Q_\sigma)( x)|}{|Q_\sigma( x)|^\frac32} 
 \leq \frac{|\nabla Q_1(y)|_+ g |\nabla Q_2(y)|}{|Q_1(y)+ \ii g Q_2(y)|^\frac32} 
 \leq \eps_\nabla  + \frac{M_\nabla}{g^\frac32 |Q_2(y)|^\frac32}+\frac{\eps_\nabla}{g^\frac12} + \frac{M_\nabla}{g^\frac12 |Q_2(y)|^\frac32}.
\end{equation*}
%

\ref{asm:n.xin} We consider projection $P_\sigma:= \chi_{B_{\sigma^{-1}R}(0)}$ and cut-offs $\xi_\sigma \in C_0^\infty(B_{\sigma^{-1}R}(0))$ so that $\xi_\sigma(x) = 1$ for $|x| \leq \sigma^{-1}R -1$ and $\||\nabla \xi_\sigma|\|_{L^\infty}$, $\|\Delta \xi_\sigma\|_{L^\infty}$ are uniformly bounded. The conditions on the operator and form domains in \ref{asm:n.xin} can be verified easily since the support of $\xi_\sigma$ is bounded (see \eg~the proof of Theorem~\ref{thm:dom.tr}).

\ref{asm:n.Q.conv} We split the estimate of
\begin{equation}
\left\|	\frac{\xi_\sigma(Q_\sigma- \ii Q_\infty)}{(Q_\infty+1)(Q_\sigma+1)} \right\|_{L^\infty(\sigma^{-1}\Omega)}
\end{equation}
to three regions. First let $\sigma |x| \leq \sigma^{\beta}$ with $\beta \in (0,1)$. Then, using homogeneity of $Q_\infty$, $\min_{|z|=1} |Q_\infty(z)| >0$, \eqref{sig.g.def} and \eqref{Q.inf.g},
\begin{equation*}
\left|\frac{Q_\sigma(x)-\ii Q_\infty(x)}{(Q_\infty(x)+1)(Q_\sigma(x)+1)} \right|
\ls \sigma^2 + \frac{|\sigma^{2+\kappa} g Q_2(\sigma x) - Q_{\infty}(\sigma x) |}{|Q_{\infty}(\sigma x)|}
\ls \sigma^2 + |h_0(\sigma x)|.
\end{equation*}
Next, when $\sigma^\beta \leq |\sigma x| \leq \delta$ with $\delta >0$ fixed, but sufficiently small, we use inequality \eqref{Q2.le} and the properties of $Q_\infty$ similarly as above to arrive at
\begin{equation}
\left|\frac{Q_\sigma(x)- \ii Q_\infty(x)}{(Q_\infty(x)+1)(Q_\sigma(x)+1)} \right|
\ls \frac{1}{|Q_\infty(x)|} + \frac{1}{|Q_\sigma(x)|} \ls \sigma^{\kappa(1-\beta)}.
\end{equation}
Finally, for $\delta \leq |\sigma x| \leq R$, we use in addition \eqref{Q2.inf} and obtain
\begin{equation}
\left|\frac{Q_\sigma(x)- \ii Q_\infty(x)}{(Q_\infty(x)+1)(Q_\sigma(x)+1)} \right|
\ls \frac{1}{|Q_\infty(x)|} + \frac{1}{|Q_\sigma(x)|} 
\ls \sigma^\kappa + \frac{\sigma^\kappa}{|Q_2(\sigma x)|} 
\ls \sigma^\kappa.
\end{equation}

The estimate of the remaing terms in \eqref{tau.n.def} is similar. Namely, denoting $\zeta_\sigma$ the characteristic function of $\supp(1-\xi_\sigma)$, we obtain
\begin{equation}
\left\| 
\frac{\zeta_\sigma}{Q_\sigma}
\right\|_{L^\infty(\sigma^{-1} \Omega)}
+
\left\| 
\frac{\zeta_\sigma}{Q_\infty}
\right\|_{L^\infty(\Rd)}
\ls \sigma^\kappa.
\end{equation}

Since all assumptions of Corollary~\ref{cor:n} are satisfied, we obtain that $S_\sigma \to S_\infty$ as $g \to + \infty$ and the conclusion for eigenvalues \eqref{la.g} follows by spectral mapping. Since the eigenfunctions of $S_\infty$ decay exponentially, see Example~\ref{ex:EF.dec}, the second term in \eqref{kap.n.def}, entering the estimate of $\rho_{k,g}^{(j)}$, can be omitted.
\end{proof}

\subsection{Examples}
\label{ssec:ex.g}

\begin{example}[Enhanced dissipation]
	\label{ex:enh.dis}
For operators $T_g$, sufficient conditions for the divergence of the real parts of all eigenvalues of $T_g$ as $g \to +\infty$ were found \cf~\cite{Constantin-2008-168,Gallagher-2009-2009,Schenker-2011-18}. In \cite{Schenker-2011-18}, the specific operator
	\begin{equation}
		T_g=\Dti+x^2+\ii g (1+|x|^\kappa)^{-1},
	\end{equation}
in $L^2(\R)$ and with $\kappa>0$ was analyzed and an estimate on the divergence rate of the real part of eigenvalues $\Re \sigma (T_g)\gs g^{\frac{2}{\kappa+2}}$ was proved, \cf~\cite[Thm.~1.2]{Schenker-2011-18}. Similar problem and result was also established in \cite[Thm.~1.9]{Gallagher-2009-2009}.
\begin{figure}[htb!]
	\includegraphics[width=0.47\textwidth]{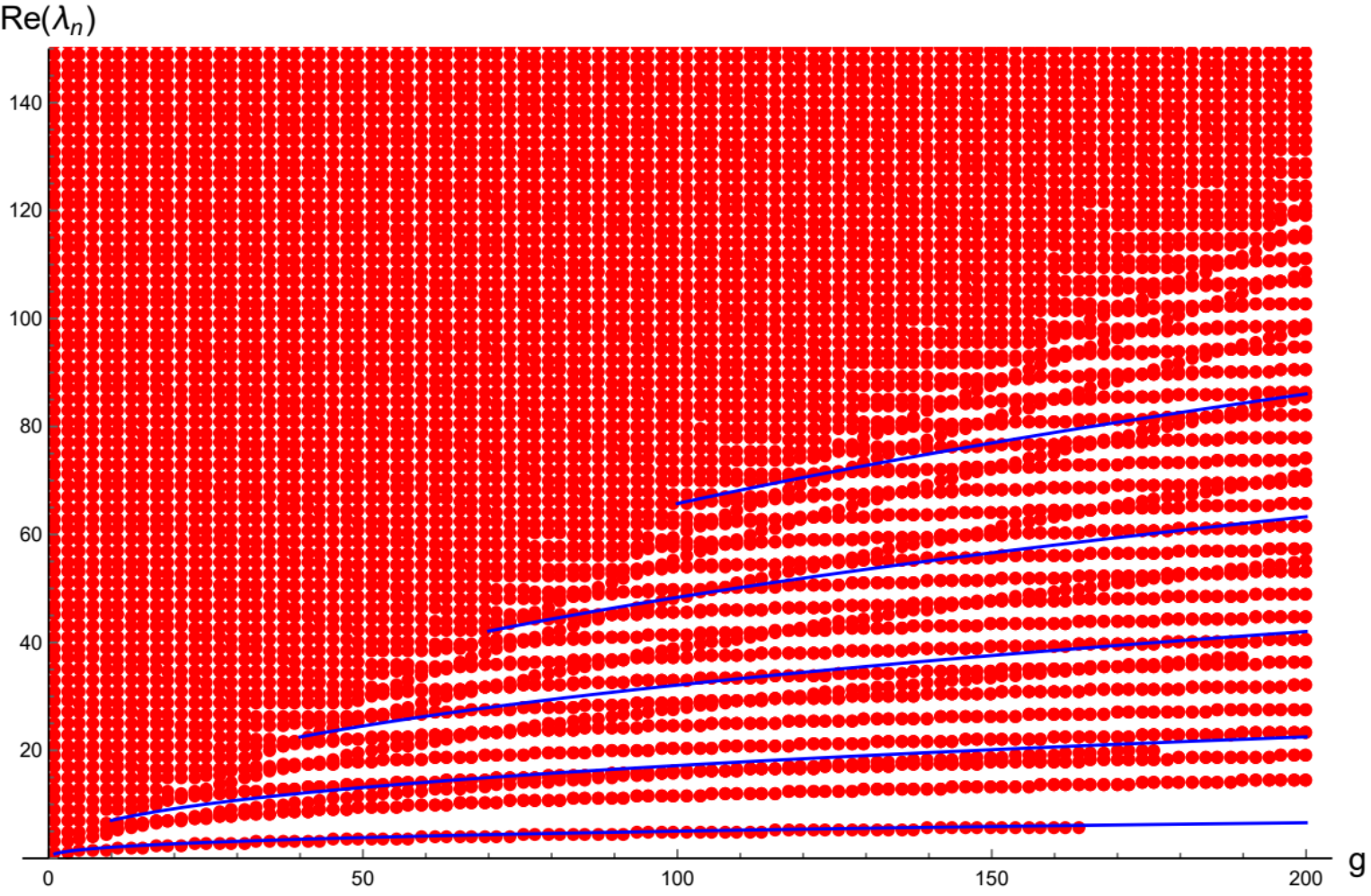}
	\hfill
	\includegraphics[width=0.47\textwidth]{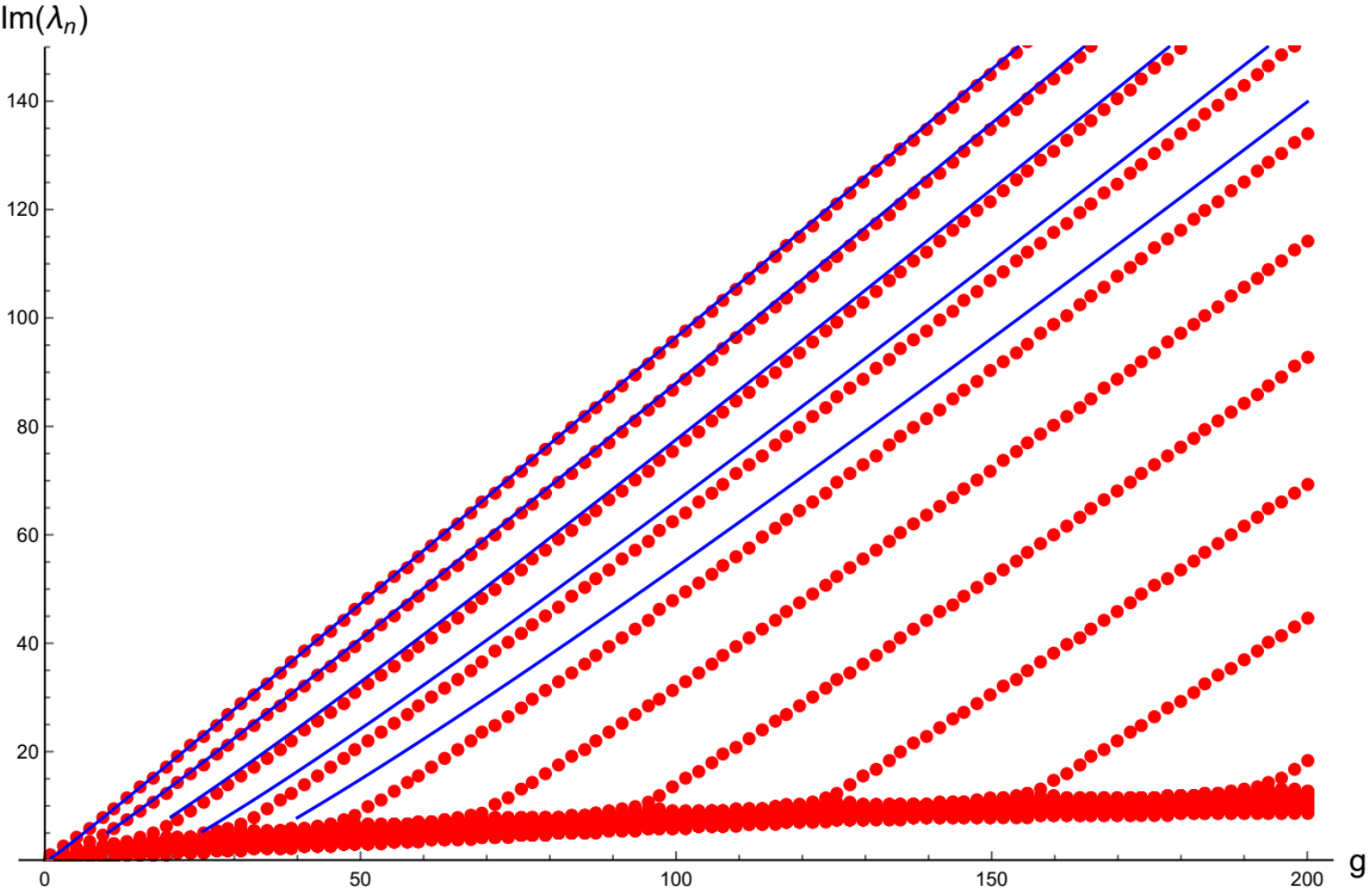}
	\caption{$Q_1(x)=x^2$, $Q_2(x)=(1+|x|^\kappa)^{-1}$: Real (left) and imaginary (right) part of the eigenvalues (red) of operators $T_g$ with $\kappa=3.15$ and $g=5,10,\dots,200$. Asymptotic curves (blue) for $\lambda_{k,g}$ for $k=1,2,\dots,5$.}
	\label{fig:Schenker}
\end{figure}

Note that the conjugated and shifted operator $T_g^*+\ii g$ satisfies Assumption \ref{asm:g} with $Q_2(x)=|x|^\kappa/(1+|x|^\kappa)$, $Q_\infty(x)=|x|^\kappa$ and $h_0(x)=-Q_2(x)$. Therefore by Theorem \ref{thm:g}, spectra of $T_g$ contain asymptotically the eigenvalues
\begin{equation}
	\lambda_{k,g}=g^{\frac{2}{\kappa+2}}(\overline{\nu_k+\rho_{k,g}})+\ii g, \quad g \to + \infty,
\end{equation}
where $\{\nu_k\}$ are the eigenvalues of operator in \eqref{H.kappa.def} with the potential $\ii |x|^\kappa$.
The remainder decays as $\rho_{k,g}=\mathcal{O}(g^{\frac{-\kappa}{2(2+\kappa)}})$ for $\kappa\in(0,4)$ and $\rho_{k,g}=\mathcal{O}(g^{\frac{-2}{2+\kappa}})$ for $\kappa\geq 4$.
This result shows that the estimate in \cite[Thm.~1.2]{Schenker-2011-18} is optimal (see Figure~ \ref{fig:Schenker}).
\hfill $\blacksquare$
\end{example}

\begin{example}[$\cP\cT$-symmetric phase transitions I] 
\label{ex:BM}

Let $\Omega = \R$, $Q_1$ be even, $Q_2$ odd and such that Assumption~\ref{asm:g} is satisfied. As in Example~\ref{ex:Q.odd}, the operators $T_g$ in \eqref{Tg.def} with such $Q_1$, $Q_2$ have the antilinear $\cP\cT$-symmetry and so the spectra of $T_g$ consists of complex conjugate pairs. The spectrum of $T_0$ is real due to the self-adjointness, however, as $g \to \infty$, a graduate appearance of complex conjugated (non-real) spectral points pairs, called $\cP\cT$-symmetric phase transitions, was observed in many examples, see~\eg~\cite{Znojil-2001-285} for one of the first works. 

For $Q_1(x) = x^2$, upper estimates on the number of non-real eigenvalues are given in \cite{Mityagin-2015-54} and precise spectral analysis of the double $\delta$ potential (with a fixed $b>0$)
\begin{equation}\label{Tg.BM}
 \Dti + x^2+\ii g (\delta(x-b)-\delta(x+b))
\end{equation}
is performed in \cite{Mityagin-2015-54a,Baker-2020-61}. In particular it is showed in \cite{Baker-2020-61} that the number of non-real eigenvalues of \eqref{Tg.BM} diverges as $g \to + \infty$.

We consider here 
\begin{equation}
	T_g=\Dti+x^2+\ii g x^3 \e^{-x^2},
\end{equation}
in $L^2(\R)$ which can be viewed as a "smooth version" of \eqref{Tg.BM}. In this case, we can apply Theorem~\ref{thm:g} in three stationary points of $Q_2(x) = x^3 \e^{-x^2}$, namely, $x_0=0$, $x_1=-\sqrt{3/2}$ and $x_2 = -x_1$. 

The operator $T_g$ satisfies Assumption \ref{asm:g} with $Q_1(x)=x^2$, $Q_2(x)=x^3\e^{-x^2}$, $Q_\infty(x)=x^3$, $\kappa=3$, $h_0(x)=\e^{-x^2}-1$. Therefore the eigenvalues
\begin{equation}
\lambda_{k,g}^{(x_0)}=g^{\frac25}(\nu_k+\BigO_k(g^{-\frac{6}{25}})), \quad g \to + \infty,
\end{equation}
where $\nu_k$ are (real) eigenvalues of the imaginary cubic oscillator (the potential $\ii x^3$), \cf~Example \ref{ex:H_kappa}, lie asymptotically in the spectra of $T_g$.

Further sets of eigenvalues can be obtained by applying the Theorem \ref{thm:g} to the operator $\widetilde T_g-\ii g (3/(2\e))^\frac32$, where $\widetilde T_g$ is the operator obtained from $T_g$ by the translation $x \mapsto x+x_1$. It satisfies the Assumption \ref{asm:g} with $\kappa=2$ and 
\begin{equation}
\begin{aligned}
Q_1(x)&=(x+x_1)^2, & Q_2(x)&=(x+x_1)^3\e^{-(x+x_1)^2}+(\tfrac{3}{2\e})^\frac32, 
\\ 
Q_\infty(x)&= (\tfrac{27}{2\e^3})^\frac12 x^2, & h_0(x)& =\frac{Q_2(x)}{x^2}-(\tfrac{27}{2\e^3})^\frac12. 
\end{aligned}
\end{equation}
Therefore the eigenvalues
\begin{equation}
	\lambda_{k,g}^{(x_1)}=g^{\frac12}(\nu_k+\BigO_k(g^{-\frac18}))-\ii g(2\e)^{-\frac12} + \tfrac32, \quad g \to + \infty,
\end{equation}
where $\nu_k=(\frac{27}{2\e^3})^{\frac14}\e^{\ii \frac{\pi}{4}}(2k+1)$, $k\in\N_0$, lie asymptotically in the spectra of $T_g$. Analogous steps can be implemented on the conjugate operator $T_g^*$ and we obtain the second set of eigenvalues $\lambda_{k,g}^{(x_2)}=\overline{\lambda_{k,g}^{(x_1)}}$, \cf~ Figure~\ref{fig:BM}. 
\begin{figure}[htb!]
\includegraphics[width=0.47\textwidth]{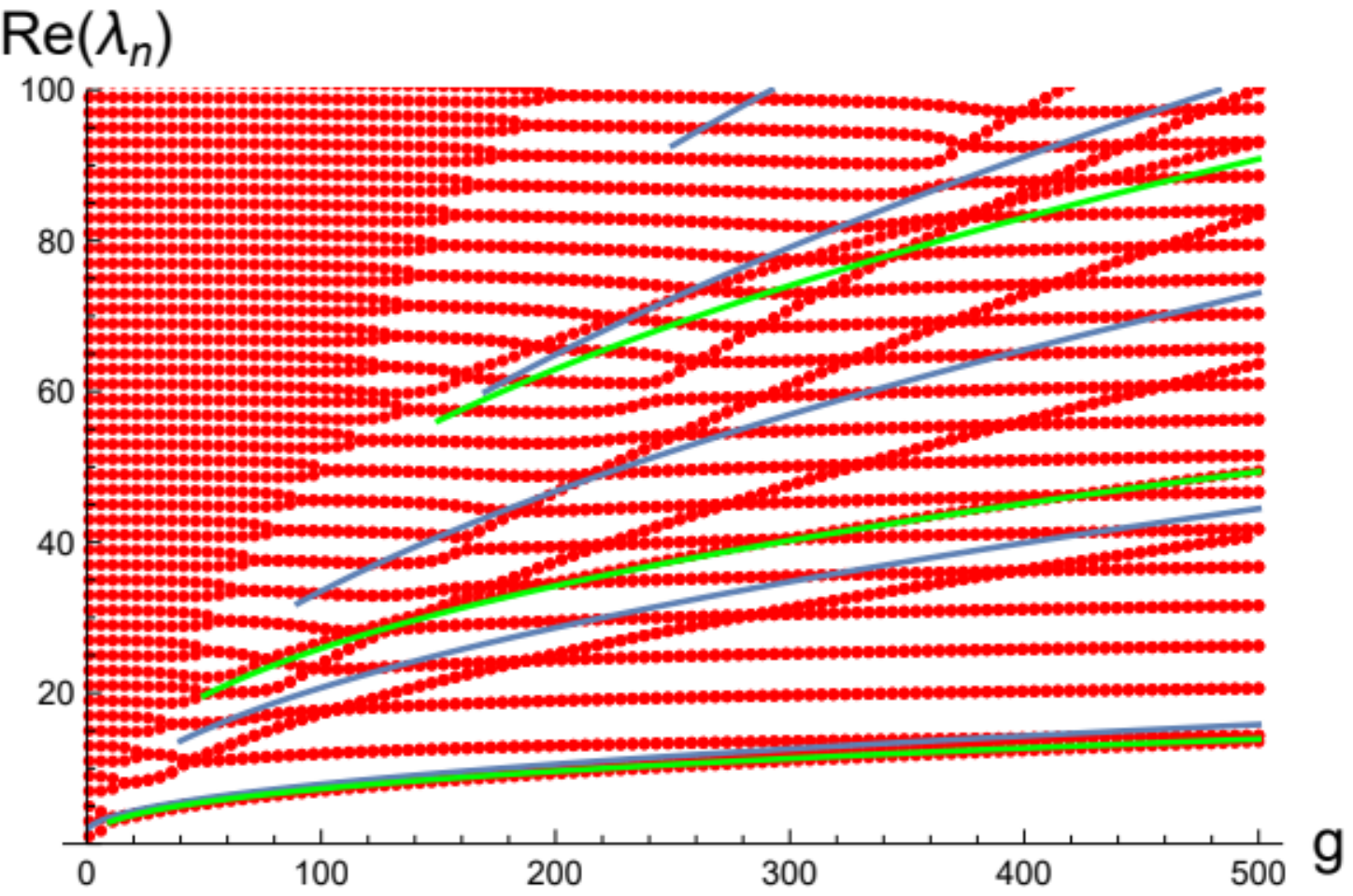}
\hfill
\includegraphics[width=0.47\textwidth]{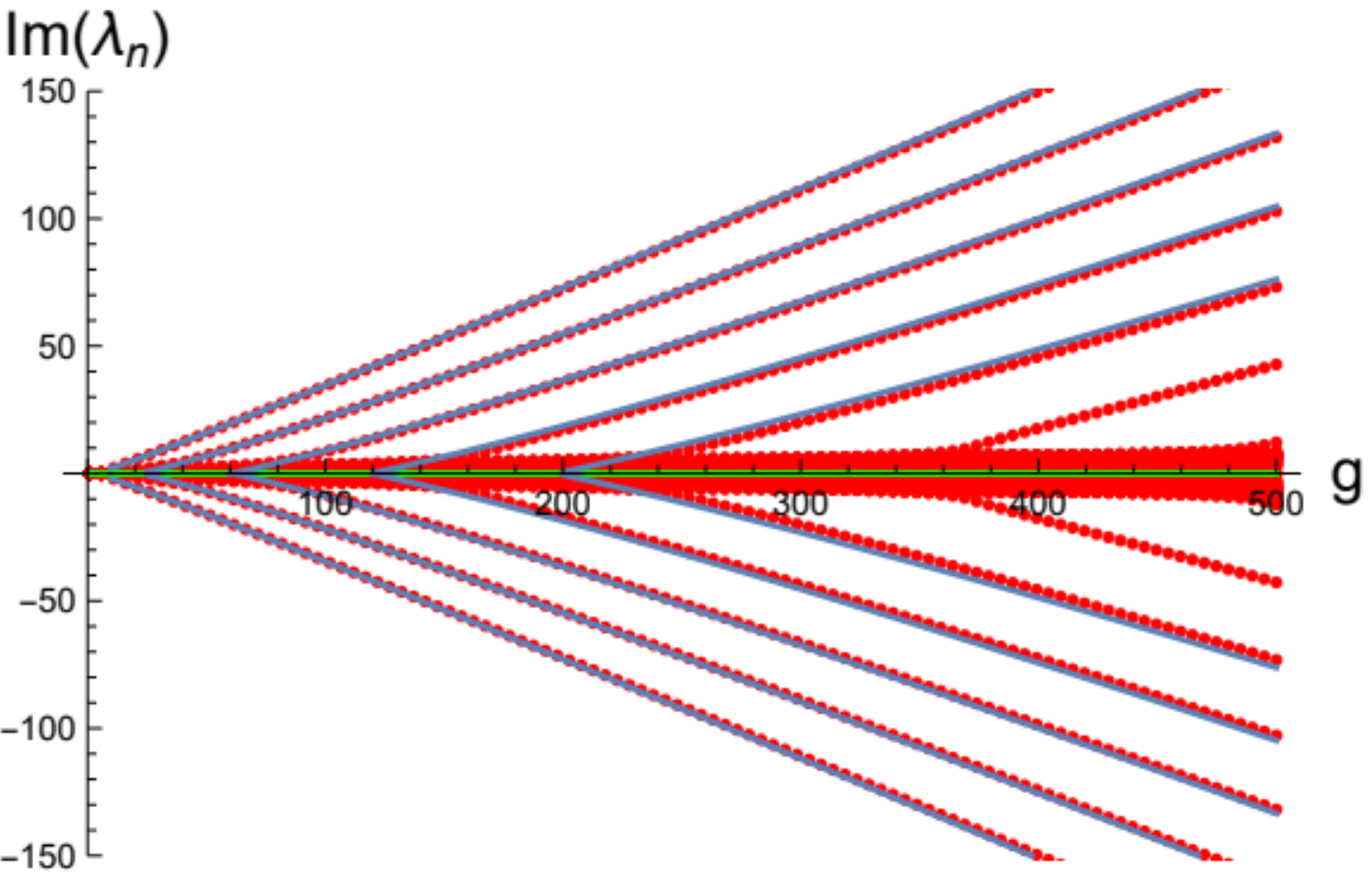}
\caption{$Q_1(x)=x^2$, $Q_2(x)=x^3\e^{-x^2}$: Real (left) and imaginary (right) part of the eigenvalues (red) of operators $T_g$ with $g=5,10,\dots,500$. Asymptotic curves (blue) $\lambda_{k,g}^{(x_1)},\lambda_{k,g}^{(x_2)}$ and (green) $\lambda_{k,g}^{(x_0)}$ for $k=1,2,\dots,5$.} 
\label{fig:BM}
\end{figure}
%

%
\hfill $\blacksquare$
\end{example}

\begin{example}[$\cP\cT$-symmetric phase transitions II] 
	\label{ex:CG}
	$\cP\cT$-symmetric phase transitions were studied in \cite{Caliceti-2014-19} for operators in $L^2(\R)$ with polynomial potentials 
	
	\begin{equation}\label{T.CG}
	\Dti + \frac{x^{2M}}{2M} + \ii g \frac{x^{M-1}}{M-1}, \quad M \in 2 \N,
	\end{equation}
	and the eventual transition of each eigenvalue was established, see \cite[Thm.~1.1]{Caliceti-2014-19} for precise claims. 
	
	For $M \geq 4$, Theorem~\ref{thm:g} used for the stationary point of $Q_2$ at $x_0=0$ yields that spectra of operators \eqref{T.CG} contain asymptotically the eigenvalues 
	\begin{equation}
			\la_{k,g,M}^{(x_0)} = g^{-\frac{2}{M+1}}(\nu_{k,M}+\mathcal{O}(g^{-\frac{2}{M+1}})), \quad g \to + \infty,
		\end{equation}
	where $\nu_{k,M}=(\tfrac{1}{M-1})^{\frac{2}{M+1}}\mu_{k,M}$, and $\{\mu_{k,M}\}_k$ are (positive) eigenvalues of $\Dti + \ii x^{M-1}$, see Example~\ref{ex:H_kappa}. Notice that the leading term of the asymptotic expansion of these eigenvalues is real and also that no such sequence is obtained for $M=2$ when $Q_2(x) = x$ since the spectrum of imaginary Airy operator is empty.
	Nonetheless, the (diverging) non-real eigenvalues found in \cite{Caliceti-2014-19} are clearly visible in Figure~\ref{fig:CG_x4} for $M=2$ and in similar plots for higher $M$. To obtain asymptotics of these we use other stationary points of the potential outside real axis.
	\begin{figure}[htb!]
		\includegraphics[width=0.47\textwidth]{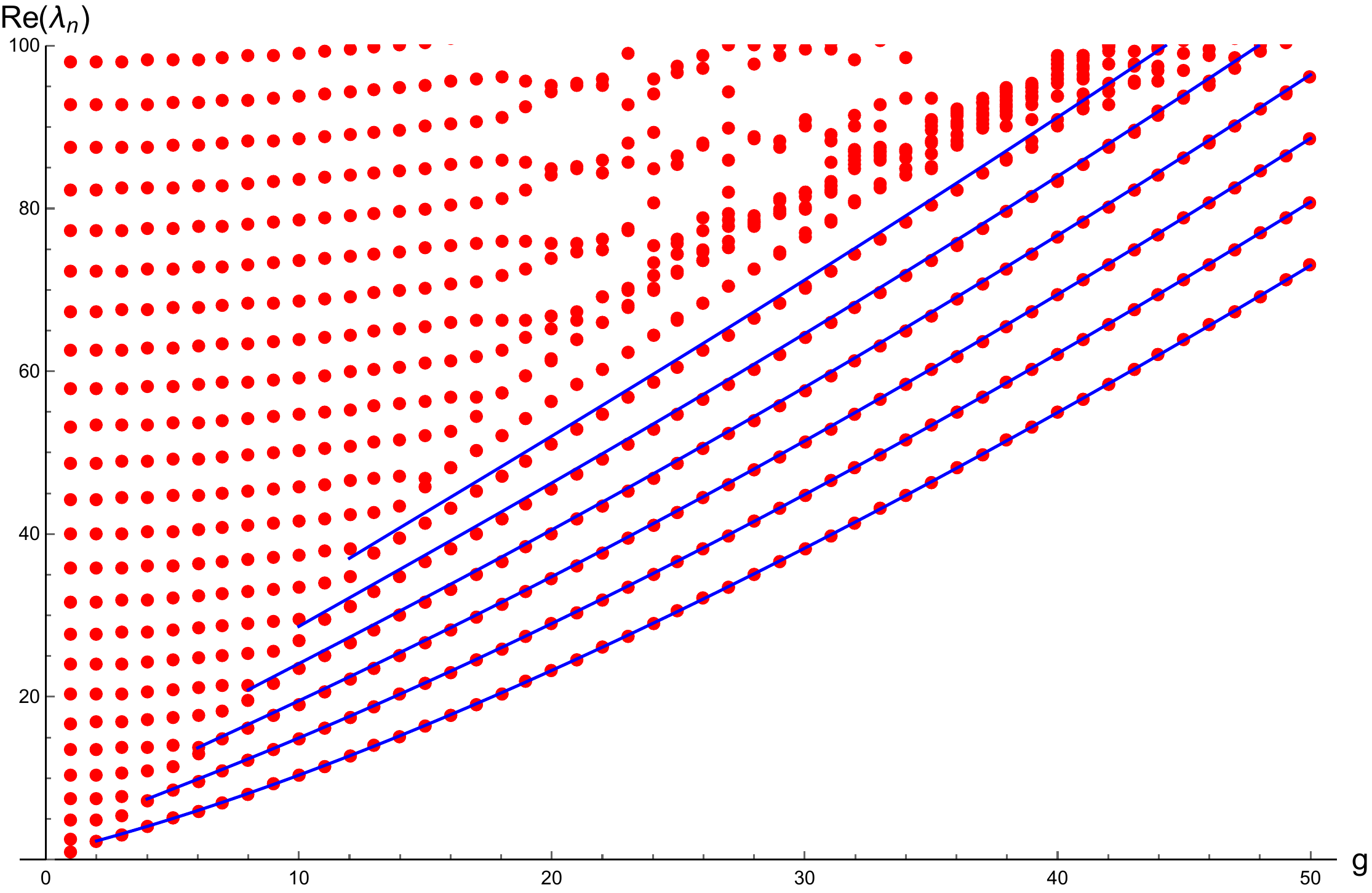}
		\hfill
		\includegraphics[width=0.47\textwidth]{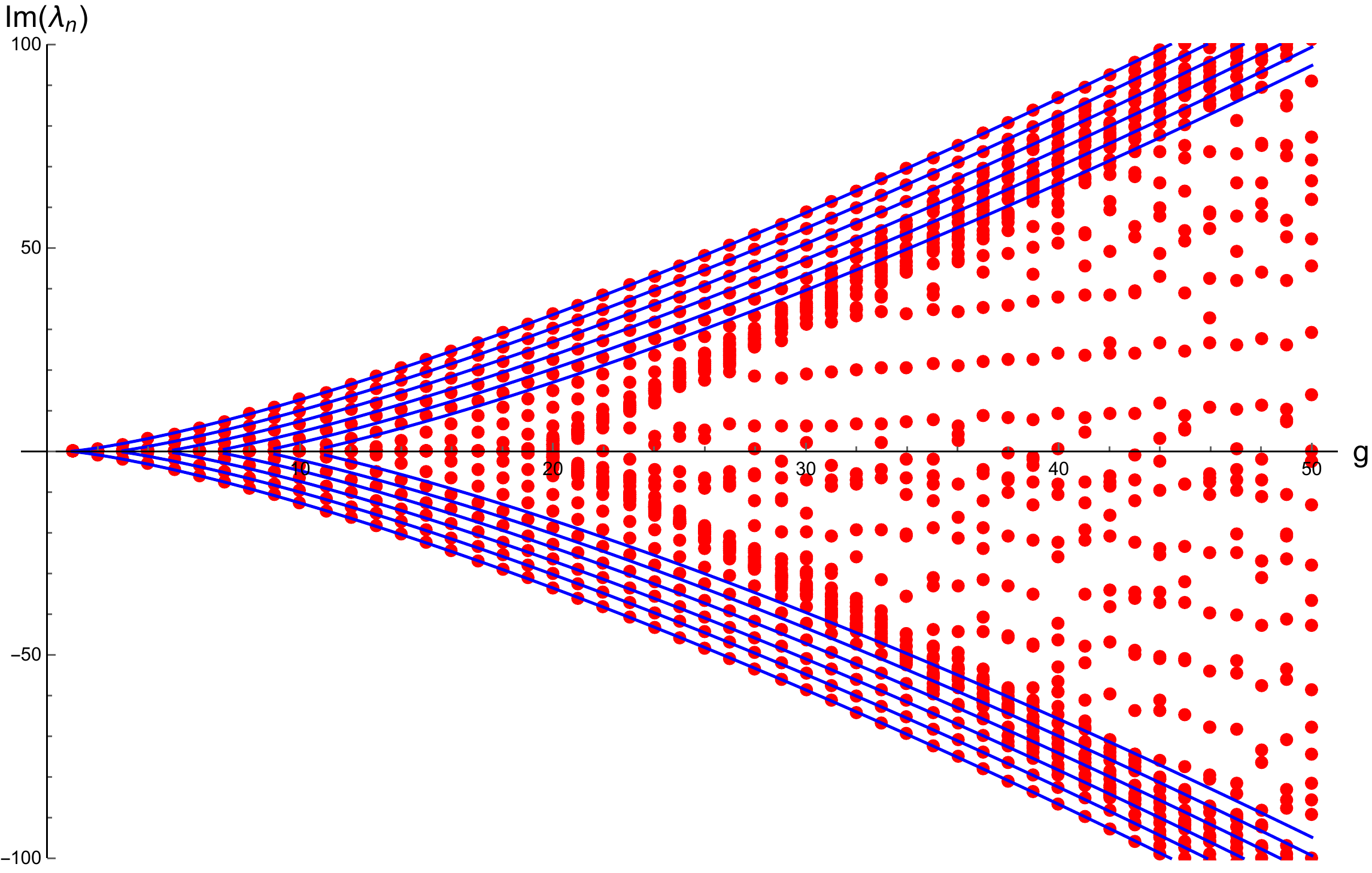}
		\caption{$Q_1(x)=x^4/4$, $Q_2(x)=x$: Real (left) and imaginary (right) part of the eigenvalues (red) of operators $T_g$ in \eqref{Tg.CG} with $g=5,10,\dots,500$. Asymptotic curves (blue) for $\lambda_{k,g}^{(x_2)}$, $\lambda_{k,g}^{(x_3)}$ with $k=1,2,\dots,5$.}
		\label{fig:CG_x4}
	\end{figure}
	
	Consider first a simpler shifted oscillator 
	$\Dti + x^2 + 2 \ii g x$ where Theorem~\ref{thm:g} is not applicable for the stationary point $x_0=0$ directly either. Nevertheless, writing $x^2 + 2 \ii g x =  (x + \ii g)^2 +g^2$ and the complex shift $x \mapsto x - \ii g$, \ie~to the complex stationary point $x_1 = - \ii g$, reveals the well-known diverging eigenvalues $\{2k+1 + g^2\}_{k \in \N_0}$. Notice that the complex shift leaves the spectrum invariant by an argument similar to complex scaling. Namely, the shift $x \mapsto x + \theta$ generates a holomorphic family (in $\theta$) of operators of type A since the operator domains are constant, moreover, for $\theta \in \R$, the spectra stay clearly invariant (such shifts induce a unitary transform). 
	
	For operators \eqref{T.CG}, we first rescale $ x \mapsto g^{2M/(M+1)} x$ to obtain
	\begin{equation}
		\frac{1}{g^{\frac{2}{M+1}}}\left[\Dti + g^2 \left( \frac{x^{2M}}{2M} + \ii \frac{x^{M-1}}{M-1} \right) \right]
	\end{equation}
	The stationary points of the potential read
	\begin{equation}
		x_0 = 0, \qquad x_k = \e^{\ii \frac{4k-1}{2(M+1)} \pi}, \quad k = 1, \dots, M+1.
	\end{equation}

	In particular for $M=2$, besides $x_0 =0$, which was already covered above, we have $x_1 = \ii$, $x_2= \e^{\ii \frac76 \pi}$ and $x_3= \e^{\ii \frac{11}6 \pi}$. The shift to $x_3$ leads to the operator 
	\begin{equation}\label{Tg.CG}
		T_g = \frac{1}{g^{\frac{2}{3}}} 
		\left(
		\Dti + \frac{g^2}4 \left[  x^2(x+\sqrt3 )^2 - \ii x^2 (2x + 3 \sqrt 3) \right] +\frac{3}4 g^2 \e^{\ii \frac \pi 3 }
		\right)	
	\end{equation}
	which is not directly covered by Theorem~\ref{thm:g} as $g^2$ multiplies the whole potential. Nonetheless, Theorem~\ref{thm:g} can be generalized in a straightforward way if the real part of the potential is non-negative and it yields that eigenvalues
	\begin{equation}
		\lambda_{k,g}^{(x_3)}=\sqrt{\frac 32} g^\frac13 (\nu_k+\BigO_k(g^{-\frac16}))+\frac 34 \e^{\ii\frac{5\pi}{3}}g^\frac43, \quad \lambda_{k,g}^{(x_2)} = \overline{\lambda_{k,g}^{(x_3)}},  \quad g \to + \infty,
	\end{equation}
	where $\nu_k=\e^{\ii \frac{\pi}{6}}(2k+1)$, $k\in\N_0$, lie asymptotically in the spectra of $T_g$, see Figure~\ref{fig:CG_x4} for illustration. The shift to $x_1$ gives the potential with the quadratic term $-3 x^2/2$ which does not correspond to a suitable limit operator.
	
	The situation is more complicated for $M >2$, there are more stationary points and in general the real part of the potential after the shift is not non-negative (although bounded from below). Moreover, numerics suggests that only two stationary points  lead to diverging eigenvalues. Namely the points for $k=\frac{M}{2}+1$ and $k=M+1$, \ie~ $\e^{\ii\frac{2M+3}{2M+2}}$ and $\e^{\ii\frac{4M+3}{2M+2}}$ (the points where the shifted potential has a global extreme of imaginary part).
	\hfill $\blacksquare$
\end{example}

\appendix

\section{appendix}

\begin{lemma}\label{lem:gn.est.app}
Let Assumption~\ref{asm:Q} be satisfied and let $T = -\Delta + Q$ be the Schr\"odinger operator defined as in Section~\ref{ssec:Schr}. Then for each $\eps_1>0$ there exists $C \geq 0$, depending only on $\eps_1$, $\eps_\nabla$ and $M_\nabla$ such that for all $f \in \Dom(T)$
	
\begin{equation}\label{T.norm.est.lem}
	\|T f\|^2 \geq \left( \frac{2-\eps_\nabla(2+\sqrt 2) - \eps_1}{2-\eps_\nabla} \right) (\|\Delta f\|^2 + \|Q f\|^2) - C_{\eps_1,\eps_\nabla,M_\nabla} \|f\|^2,
\end{equation}
\end{lemma}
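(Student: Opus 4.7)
The plan is to expand $\|Tf\|^2$ via integration by parts, use $\Re Q \geq 0$ to discard a favourable term, bound the resulting cross term using Assumption~\ref{asm:Q}, and close a self-consistent estimate for $\||Q|^{1/2}\nabla f\|$ by Young's inequality with carefully tuned parameters. For $f$ in a suitable core (e.g.\ $C_c^\infty(\Omega)$, extended to $\Dom(T)$ by density in the graph norm of $T$), expanding $\|-\Delta f + Qf\|^2$ and integrating by parts in $-\langle \Delta f, Qf\rangle = \langle \nabla f, (\nabla Q)f + Q\nabla f\rangle$ yields
\begin{equation*}
\|Tf\|^2 = \|\Delta f\|^2 + \|Qf\|^2 + 2\int_\Omega \Re Q\,|\nabla f|^2\,\dd x + 2\Re\int_\Omega \nabla f \cdot \overline{\nabla Q}\,\overline f\,\dd x,
\end{equation*}
whose third term is non-negative by $\Re Q \geq 0$.

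The fourth term is controlled via Cauchy--Schwarz and Assumption~\ref{asm:Q}:
\begin{equation*}
\left|2\Re\int \nabla f\cdot\overline{\nabla Q}\,\overline f\,\dd x\right|
\leq 2\eps_\nabla \|Qf\|\,\||Q|^{1/2}\nabla f\| + 2M_\nabla \|\nabla f\|\|f\|,
\end{equation*}
using $\int |Q|^{3/2}|\nabla f||f|\,\dd x \leq \|Qf\|\,\||Q|^{1/2}\nabla f\|$. The crucial auxiliary step is an integration by parts applied to $\int |Q||\nabla f|^2\,\dd x$: noting $|\nabla|Q||\leq |\nabla Q|$ a.e.\ on the complement of the zero set of $Q$, one obtains
\begin{equation*}
\||Q|^{1/2}\nabla f\|^2 \leq \eps_\nabla \|Qf\|\,\||Q|^{1/2}\nabla f\| + \|\Delta f\|\|Qf\| + M_\nabla \|\nabla f\|\|f\|.
\end{equation*}
Writing $A:=\|\Delta f\|$, $B:=\|Qf\|$, $X:=\||Q|^{1/2}\nabla f\|$, and collecting the $M_\nabla$-terms into a remainder $L$, these two inequalities combine as
\begin{equation*}
\|Tf\|^2 \geq A^2 + B^2 - 2\eps_\nabla BX - L, \qquad X^2 \leq \eps_\nabla BX + AB + L.
\end{equation*}

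The remaining step is an extremisation. Applying $2\eps_\nabla BX \leq \eps_\nabla(\xi X^2 + B^2/\xi)$ with $\xi > 0$, substituting the bound on $X^2$ after a second Young inequality that eliminates the $\eps_\nabla BX$ on its right-hand side, and splitting the residual $AB$ via a third Young parameter to equalise the coefficients of $A^2$ and $B^2$, one reaches
\begin{equation*}
\|Tf\|^2 \geq c(\xi,\ldots)(A^2+B^2) - L',
\end{equation*}
where the three parameters are then chosen to maximise $c(\xi,\ldots)$. This extremisation produces exactly the constant $(2-\eps_\nabla(2+\sqrt 2))/(2-\eps_\nabla)$; the appearance of $\sqrt 2$ is the fingerprint of the optimisation, and the critical threshold $\ec = 2-\sqrt 2$ is precisely where the optimised coefficient drops to zero. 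The slack $\eps_1 > 0$ absorbs both the deviation from the optimum and the lower-order remainder $L'$, which is controlled by $\|\nabla f\|^2 \leq \|\Delta f\|\|f\|$ (integration by parts) followed by one more Young splitting, yielding the final term $-C_{\eps_1,\eps_\nabla,M_\nabla}\|f\|^2$.

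The main technical obstacle is the three-parameter optimisation that recovers the precise coefficient $2+\sqrt 2$; everything else is routine manipulation of Cauchy--Schwarz, Young's inequality and integration-by-parts identities. A secondary subtlety is the justification of the opening identity on all of $\Dom(T)$, since $\Dom(T)$ is not a priori contained in $W^{2,2}\cap\Dom(Q)$; this is handled by approximation of $f\in\Dom(T)$ by smooth compactly supported functions in the natural form norm $\|\cdot\|_t$, with both sides of the resulting inequality being continuous in this norm once the graph separation is established.
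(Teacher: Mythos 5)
Your proposal is correct and follows essentially the same route as the paper's proof in the Appendix: expand $\|Tf\|^2$ by integration by parts, discard the non-negative $\Re Q$ term, bound the cross term via Assumption~\ref{asm:Q}, close a self-consistent estimate for $\||Q|^{1/2}\nabla f\|^2$ using $|\nabla|Q||\le|\nabla Q|$ and a further integration by parts, and then optimise the Young parameters (the paper's choice $\delta_1=1/2$, $2\delta_2=1+\sqrt2$ yields exactly the coefficient $(2-\eps_\nabla(2+\sqrt2))/(2-\eps_\nabla)$), with the reduction to compactly supported $f$ handled by the same standard approximation argument. The only difference is cosmetic bookkeeping in how the Young parameters are distributed between the two coupled inequalities.
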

\begin{proof}
By a standard approximation argument, see \eg~\cite{Krejcirik-2017-221}, it suffices to establish \eqref{T.norm.est.lem} for $f \in \Dom(-\Delta_{\rm D})$ with a bounded support. Integrating by parts we get
\begin{equation}\label{sep.in.1}
\|(-\Delta + Q)f\|^2 \geq \|\Delta f \|^2 + \|Qf\|^2 +   2 \langle \nabla f, (\Re Q) \nabla f \rangle - 2 \langle |\nabla f|, |\nabla Q| |f| \rangle. 
\end{equation}
Employing \eqref{asm:Q}, Cauchy-Schwartz and Young inequalities we get (with $\delta_1>0$)
\begin{equation}\label{nQ.1}
\begin{aligned}
|\langle |\nabla f|, |\nabla Q| |f| \rangle| &\leq \eps_\nabla \langle |Q|^\frac12 |\nabla f|, |f| \rangle  + M_\nabla \langle |\nabla f|, |f| \rangle
\\
&\leq  \eps_\nabla \delta_1 \||Q|^\frac12 \nabla f\|^2 + \frac{\eps_\nabla}{4 \delta_1}\|Qf\|^2 + M_\nabla\|\nabla f\| \|f\|.
\end{aligned}
\end{equation}
Next, integrating by parts and using that $|\nabla |Q|| \leq |\nabla Q|$,
\begin{equation}\label{nQ.2}
\||Q|^\frac12 \nabla f\|^2 = \langle |Q| \nabla f,  \nabla f \rangle \leq |\langle -\Delta f, |Q| f \rangle| + \langle |\nabla f|, |\nabla Q| |f|  \rangle. 
\end{equation}
Thus combining \eqref{nQ.1} and \eqref{nQ.2}, using Young inequality (with $\delta_2>0$) and 
\begin{equation}
\|\nabla f\|^2 = \langle -\Delta f, f \rangle \leq \delta_3 \|\Delta f\|^2 + \frac1{4 \delta_3} \|f\|^2,  	
\end{equation}
we obtain
\begin{equation*}
\begin{aligned}
|\langle |\nabla f|, |\nabla Q| |f| \rangle| &\leq \frac{1}{1 -\delta_1 \eps_\nabla} 
\left(
\delta_1 \eps_\nabla \|\Delta f\| \| Q f\| + \frac{\eps_\nabla}{4 \delta_1} \|Qf\|^2 + M_\nabla\|\nabla f\| \|f\|
\right)
\\ & \leq
\frac{\eps_\nabla \delta_1 \delta_2 + M_\nabla \delta_3 }{1 -\delta_1 \eps_\nabla} \|\Delta f\|^2 + 
\frac{\eps_\nabla}{1 -\delta_1 \eps_\nabla} \left(\frac{\delta_1}{4 \delta_2} + \frac1{4 \delta_1} \right) \|Qf\|^2
\\
& \quad + \frac{M_\nabla}{1 -\delta_1 \eps_\nabla}  \left( \frac{1}{ 4\delta_3} + \frac{1}{ 4} \right) \|f\|^2.
\end{aligned}
\end{equation*}
Inserting the last inequality in \eqref{sep.in.1} and since $\Re Q \geq 0$ by assumption, we get
\begin{equation}
\begin{aligned}		
\|Tf\|^2 &\geq \left( 1 - \frac{2\eps_\nabla \delta_1 \delta_2 }{1 -\delta_1 \eps_\nabla} - \frac{2 M_\nabla \delta_3}{1 -\delta_1 \eps_\nabla}\right)  \|\Delta f\|^2 
 \\ &\quad + 
\left( 1 - \frac{2\eps_\nabla}{1 -\delta_1 \eps_\nabla} \left(\frac{\delta_1}{4 \delta_2} + \frac1{4 \delta_1} \right) \right) \|Qf\|^2 - C \|f\|^2.
\end{aligned}
\end{equation}
Notice that $\delta_3>0$ can be chosen arbitrarily small and it is hidden in $\eps_1$ in \eqref{T.norm.est.lem}. Simple manipulations show that \eqref{T.norm.est.lem} holds if both 
\begin{equation}
\eps_\nabla < \frac{1}{\delta_1(1+2 \delta_2)}, \qquad \eps_\nabla <\frac{2 \delta_1 \delta_2}{\delta_1^2(1+2 \delta_2) + \delta_2}
\end{equation}
are satisfied (the first inequality implies that $1-\delta_1 \eps_\nabla>0$). Equating the right sides of these inequalities, we have $	\delta_1^2 (4 \delta_2^2-1) =\delta_2$ and our goal is to maximize $\delta_1(1+2\delta_2)$. By a simple calculus we obtain the values $\delta_1 =1/2$ and $2\delta_2 = 1 +\sqrt 2$, which yields the constants in \eqref{T.norm.est.lem}
\end{proof}

\begin{proof}[Proof of Theorem~\ref{thm:EF.dec}]
	
The crucial step in the proof is the following generalized weighted coercivity of the form $t$ proved in \cite[Thm.~3.3]{Krejcirik-2017-221}. Let $w \in W^{1,\infty}(\Omega;\R)$, then for every $\alpha \in (0,1]$ and for every $f \in \Dom(t)$ 
\begin{equation}\label{w.coer.aplha}
\begin{aligned}
& \Re t(f,e^{2w}f) + \Im t(f,\Phi e^{2w}f) \geq
\\
&  \qquad (1-\alpha) \|\nabla e^w f\|^2 
+ \int_{\Omega} \left(\widetilde Q_\alpha(x)-\left(1+\frac{2}{\alpha}\right) |\nabla w(x)|^2 \right)|e^{w(x)}f(x)|^2 \, \dd x,
\end{aligned}
\end{equation}
where  $\widetilde Q_\alpha = \Phi^2 + \Re Q - \frac1{2\alpha}|\nabla\Phi|^2$ and $\Phi$ is as in \eqref{Phi.def}. Hence taking $\alpha=1$,we get	
\begin{equation}\label{w.coer}
 \Re t(f,e^{2w}f) + \Im t(f,\Phi e^{2w}f) \geq
 \int_{\Omega} \left(\widetilde Q(x)-3 |\nabla w(x)|^2 \right)|e^{w(x)}f(x)|^2 \, \dd x.
\end{equation}
Inserting $f=\psi$ in \eqref{w.coer} leads to 
\begin{equation*}
(\Re \la + |\Im \la|) \|e^w \psi\|^2 + 2 \|e^w \psi\| \|e^w \psi_0\| 
\geq \int_{\Omega} (\widetilde Q(x)-3|\nabla w(x)|^2)|e^{w(x)} \psi (x)|^2 \, \dd x.
\end{equation*}

	In the next step, we approximate $W$ in a suitable way. To this end, we define
	\begin{equation}
	\eta_n(s) = \begin{cases}
	s, & 0 \leq s \leq n,
	\\
	2n-s, & n \leq s \leq 2n,
	\\
	0, & 2n \leq s;
	\end{cases}
	\end{equation}
	notice that $\|\eta_n'\|_{L^\infty(\R_+)}=1$ and $w_n(x):=\eta_n(W(x))$, $x \in \Omega$, 
	satisfy 
	\begin{equation}
	w_n \leq W, \quad |\nabla w_n| \leq |\nabla W|, \quad w_n \in W^{1,\infty}(\Omega).  
	\end{equation}
	Then using \eqref{Q.W.ineq} and Young's inequality with $\delta>0$ applied to $2 \|e^w \psi\| \|e^w \psi_0\|$, we arrive at (with $\Omega_2:=\Omega\setminus \Omega_1$)
\begin{equation*}\label{W.psi}
	\begin{aligned}
	& \int_{\Omega_1} |e^{w_n(x)} \psi (x)|^2 \, \dd x
	\\ & \ 
	\leq 
	\frac{
		|\Re \la| + |\Im \la| + \||\widetilde Q|+ 3 |\nabla W|^2\|_{L^\infty(\Omega_2)} + \delta
	}{ \delta} \|e^W\|_{L^\infty(\Omega_2)}^2 \|\psi\|^2
	+ \frac{1}{\delta^2} \|e^W \psi_0\|^2.
	\end{aligned}
	\end{equation*}	
	The claim \eqref{eW.psi} then follows by Fatou's lemma and simple manipulations.
\end{proof}

{\footnotesize
	\bibliographystyle{acm}
	\bibliography{../references}
}

\end{document}